\newcommand{\etalchar}[1]{$^{#1}$}
\newtheorem{theorem}{Theorem}
\newtheorem*{theorem*}{Theorem}
\newtheorem{lemma}{Lemma}
\newtheorem{proposition}[theorem]{Proposition}
\newtheorem{definition}{Definition}
\newtheorem{example}{Example}
\newcommand{\RR}{\mathbb{R}}
\newcommand{\NN}{\mathbb{N}}
\newcommand{\ZZ}{\mathbb{Z}}
\newcommand{\D}{\Delta}
\newcommand{\test}{C_c^{\infty}}
\newcommand{\Om}{\Omega}
\newcommand{\II}{\iint\limits_}
\newcommand{\I}{\int\limits_}
\numberwithin{equation}{section}
\numberwithin{theorem}{section}
\numberwithin{lemma}{section}
\numberwithin{example}{section}
\begin{document}
    
    \title[Hardy and Poincar\'e inequalities]{Hardy and Poincar\'e inequalities in fractional Orlicz-Sobolev spaces}
    
    \author{Kaushik Bal}
    
    \email{kaushik@iitk.ac.in}
    
    \author{Kaushik Mohanta}
    \email{kmohanta@iitk.ac.in}
    \author{Prosenjit Roy}
    
    \email{prosenjit@iitk.ac.in}
    
    \author{Firoj Sk}
    
    \email{firoj@iitk.ac.in}
    
    \address{Indian Institute of Technology Kanpur, India}

    \keywords{fractional Hardy inequality; regional fractional Poincar\'e inequality; fractional Poincar\'e inequality; fractional Orlicz Sobolev spaces; unbounded domains}.
    \subjclass[2020]{35B27; 46E35; 49J52}
    \date{}

    \smallskip
    \begin{abstract}
     We provide sufficient conditions for boundary Hardy inequality to hold in bounded Lipschitz domains, complement of a point (the so called point Hardy inequality), domain above the graph of a Lipschitz function, complement of a bounded Lipschitz domain in fractional Orlicz-Sobolev setting. As a consequence we get sufficient conditions for regional fractional Orlicz Poincar\'e inequality in bounded Lipschitz domains. Necessary conditions for fractional Orlicz Hardy and regional fractional Orlicz Poincar\'e inequalities are also given for bounded Lipschitz domains. Various sufficient conditions on open sets are provided for fractional Orlicz Poincar\'e inequality and regional fractional Orlicz Poincar\'e inequality  to hold.
    \end{abstract}
    
    \maketitle
    

\section{Introduction}
The aim of this article is to study two very well known inequalities, the Poincar\'e inequality and the Hardy inequality on fractional Orlicz-Sobolev setting. The classical Poincar\'e inequality  \cite[Chapter~5.8.1]{evans} states that for any bounded domain $D \subseteq\RR^N$, $q\geq1$ there exists $c=c(q,N,D)>0$ such that for any $f\in\test(D)$,
\begin{equation}\label{poin}
\|f\|_{L^q(D)} \leq c\|\nabla f\|_{L^q(D)}.
\end{equation}
        For bounded domains it is a standard fact that the best constant is attained.
        Although generalized in many different direction as can be seen in \cite{beb,Jeri,KeZh,lieb}, for the purpose of this paper we would like to refer Gossez \cite[Lemma 5.7]{Gos}, who generalized \cref{poin} in Orlicz-Sobolev setting (defined below).	The standard boundary Hardy inequality states that if $D$ is a
        bounded Lipschitz domain in $\RR^N$, $q\geq1$, then there exists a constant
        $c=c(q)>0$ such that for $f\in\test( D )$,
        \begin{equation}\label{har}
        \|f/\delta_D\|_{L^q(D)} \leq c\|\nabla f\|_{L^q(D)},
        \end{equation}
        where $ \delta_D(x) := \inf\lbrace |x-y|:y\in
        D^c\rbrace $. We shall frequently denote $\delta_D(x)$ by $\delta_x$.  This has been further developed in \cite{BrMa, BrMaSh, kufner,MoMiPi} and several other works. For the generalization to Orlicz version  of local Hardy inequality we refer to \cite{Heinig,KaPi,MaMiOhShi,MiNaOhSh}.\smallskip
        
        We shall primarily be concerned with the ``Orlicz space" $L^A(D)$ and the ``fractional Orlicz-Sobolev space" $W^{s,A}(D)$. We start by defining these spaces. A continuous, convex function $A:[0,\infty)\to[0,\infty) $ such that
        $\lim\limits_{t\to0}\frac{A(t)}{t}=0$ and
        $\lim\limits_{t\to\infty}\frac{A(t)}{t}=\infty$ is called an
        \textbf{$N$-function} or an \textbf{Young function} \protect{\cite[Chapter 8.2]{ada}}.  
        
        \begin{definition}[ \protect{\cite[Chapter 8.2]{ada}}]\label{delta2 cond}
            We say that $A$ satisfies the $\mathbf{\D_2}$ \textbf{condition globally} or
            simply the $\mathbf{\Delta_2}$ \textbf{condition} ($A\in\D_2$) if there exists constant $p>0$ such
            that
            $$
            A(2t)\leq pA(t), \ \forall \ t> 0.
            $$
        \end{definition}
        \noindent	The set $$L^A( D ):=\left\{ f: D \to \RR \  \textrm{measurable} \ \Big| \ \exists \ \lambda>0 \mbox{ such that } M_{L^A( D )}\left(\frac{f}{\lambda}\right)<\infty\right\} $$ is called the \textbf{Orlicz space} and the set $$W^{s,A}( D ):=\left\{ f\in L^A(D) \ \Big| \ \exists \ \lambda>0 \mbox{ such that } M_{W^{s,A}( D )}\left(\frac{f}{\lambda}\right)<\infty\right\}$$ is called the \textbf{fractional Orlicz-Sobolev space}, where $$ M_{L^A( D )}(f):= \I{ D } A(|f(x)|) dx \  \textrm{ and } M_{W^{s,A}( D )}(f):= \II{ D \times D }A\left( \frac{| f(x)-f(y) |}{| x-y |^s}\right) \frac{dxdy}{|x-y|^N}. $$
        \noindent In the case $A(t)=t^q$ for some $q>1$, $L^A(D)$ and $W^{s,A}(D)$ are well known Lebesgue space $L^q(D)$ and the fractional Sobolev space $W^{s,q}(D)$ respectively \cite[p.~524]{hhg}.
        \smallskip
        
        For $s \in (0,1)$, we start by defining the following important quotients that will be used frequently throughout this article:
        $$
        H_{N,s,A}( D ):=\inf_{\substack{f\in\test( D )\\f\neq 0
        }}\frac{M_{W^{s,A}( D )}(f)}{\I D A\left(
            \frac{|f(x)|}{\delta_x^s}\right)dx}, \ P^1_{N,s,A}( D ):=\inf_{\substack{f\in\test( D )\\f\neq 0
        }}\frac{M_{W^{s,A}( D )}(f)}{\I D A(|f(x)|)dx}
        $$
        and
        $$
        P^2_{N,s,A}( D ):=\inf_{\substack{f\in\test( D )\\f\neq 0
        }}\frac{M_{W^{s,A}( \RR^N )}(f)}{\I D A(|f(x)|)dx}.
        $$
        \noindent  We shall say that 
        \begin{itemize}
            \item 
            fractional Orlicz Hardy inequality (denoted as $FOHI(s,A)$)
            holds if $H_{N,s,A}( D )>0$;
            \item
            regional fractional Orlicz Poincar\'e inequality ($RFOPI(s,A)$) holds if $P^1_{N,s,A}( D )>0$;
            \item
            fractional Orlicz Poincar\'e inequality ($FOPI(s,A)$) holds if $P^2_{N,s,A}( D )>0$.
            
        \end{itemize}
        \smallskip
        
        We start by recalling some literature on  fractional Orlicz Hardy inequality for the case when $A(t) = t^q, \  q>1$. Kufner \cite{kufner3} proved $FOHI(s,t^q)$ in one
        dimension when $ D =(0,\infty)$.  For bounded Lipschitz domain $D$, Dyda \cite[Theorem~1.1 and Section~2]{dyd}
        proved the following results:
        \begin{theorem}[Dyda]\label{dyda's theorem}
            Let $\beta>0$ and $q>1$.  The Hardy inequality, 
            \begin{equation}\label{dyda's inequality}
            \I{D}\frac{|u(x)|^q}{\delta_D(x)^\beta}dx\leq c\ \II{D\times D}\frac{|u(x)-u(y)|^q}{|x-y|^{N+\beta}}dxdy,\quad \mbox{for all } u\in C_c(D),
            \end{equation}
            where $c=c(D.\beta,N,q)<\infty$ is a constant, holds true in each of the following cases:
            \begin{enumerate}
             \item[(T1)] $D$ is a bounded Lipschitz domain and $\beta>1$;
             \item[(T2)] $D$ is a complement of a bounded Lipschitz domain, $\beta\neq 1$ and $\beta\neq N$;
             \item[(T3)] $D$ is a domain above the graph of a Lipschitz function $\RR^{N-1}\to\RR$ and $\beta\neq1$;
             \item[(T4)] $D$ is a complement of a point and $\beta\neq N$.
            \end{enumerate}
        \end{theorem}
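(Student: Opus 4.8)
The plan is to reduce all four cases to a single one-dimensional estimate by means of bi-Lipschitz changes of variables, a partition of unity, and --- for the unbounded domains --- a conformal inversion; the exponent restrictions then emerge transparently from the individual steps.

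First I would establish the one-dimensional core: for $v\in C_c^\infty((0,\infty))$ and $\beta\ne1$,
\[
\int_0^\infty\frac{|v(t)|^q}{t^{\beta}}\,dt\ \le\ c\iint_{(0,\infty)^2}\frac{|v(t)-v(\tau)|^q}{|t-\tau|^{1+\beta}}\,dt\,d\tau
\]
(when $\beta\ge q$ the right-hand side is $+\infty$ for every non-constant Lipschitz $v$, so only $0<\beta<q$, i.e.\ $s=\beta/q\in(0,1)$, is of substance). The one-dimensional case was established by Kufner \cite{kufner3}; alternatively one proves it directly via the dyadic decomposition $(0,\infty)=\bigcup_{k\in\ZZ}[2^k,2^{k+1})$, bounding the left side on each block (which is $\asymp 2^{-k\beta}\int_{[2^k,2^{k+1})}|v|^q$) by a Poincar\'e inequality on the block for the oscillatory part and by a telescoping/discrete-Hardy argument for the block averages --- the cases $0<\beta<1$ and $\beta>1$ needing slightly different weightings, and $\beta=1$ being exactly the critical exponent at which the discrete Hardy inequality, and hence the estimate, breaks down. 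I would then pass from the line to the half-space $\RR^N_+=(0,\infty)\times\RR^{N-1}$ by applying the one-dimensional inequality on each vertical line and integrating transversally, the comparison of the resulting purely vertical Gagliardo seminorm with the full one on $\RR^N_+$ being supplied by a transverse averaging (for which $\int_{\RR^{N-1}}(|z|^2+a^2)^{-(N+\beta)/2}\,dz=c_N\,a^{-1-\beta}$ is the basic input); equivalently this passage can be carried out by a Whitney decomposition of $\RR^N_+$ and chaining towards infinity. Either way, the inequality of the theorem holds on $\RR^N_+$ for every $\beta\ne1$.

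Next I would treat the graph and bounded Lipschitz cases. For (T3), if $D=\{(x',x_N):x_N>\phi(x')\}$ with $\phi$ Lipschitz, the map $\Psi(x',x_N)=(x',x_N-\phi(x'))$ is a volume-preserving bi-Lipschitz bijection of $D$ onto $\RR^N_+$ distorting $|x-y|$ and $\delta_D(x)$ (which is comparable to $x_N-\phi(x')$) by factors depending only on $\mathrm{Lip}(\phi)$, so (T3) reduces to the half-space case. For (T1) I would cover $\overline D$ by $U_0\Subset D$ and boundary charts $U_1,\dots,U_m$ on which $D$ is, after a rotation, a piece of a graph domain; with a subordinate partition of unity $\{\chi_j\}$ I write $u=\sum_j\chi_ju$, flatten each boundary piece as in (T3), apply the half-space inequality to $(\chi_ju)\circ\Psi_j^{-1}$, and transfer back. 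The only extra term produced is $\int_D|\chi_ju|^q\,\mathrm{dist}(x,\partial U_j\cap D)^{-\beta}\,dx$, which is finite and $\lesssim\|u\|_{L^q(D)}^q$ because $\chi_j$ vanishes near $\partial U_j$; and the interior piece obeys $\int_D|\chi_0u|^q\delta_D^{-\beta}\lesssim\|u\|_{L^q(D)}^q$ directly. It then remains to absorb $\|u\|_{L^q(D)}^q$, and here I would invoke the regional fractional Poincar\'e inequality $\|u\|_{L^q(D)}^q\le c\,M_{W^{s,q}(D)}(u)$ on the bounded domain $D$ --- which is valid precisely when $\beta=sq>1$. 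This is exactly where the stronger hypothesis $\beta>1$ (and not merely $\beta\ne1$) is used; for $\beta\le1$ the Hardy inequality on a bounded domain genuinely fails, as a ``$u\equiv1$'' competitor shows.

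For the unbounded cases: if $D=\RR^N\setminus\{0\}$, so $\delta_D(x)=|x|$, then for $0<\beta<N$ the assertion is the classical fractional Hardy inequality on $\RR^N$, valid for $u\in C_c^\infty(\RR^N)$ and a fortiori for $u$ vanishing near the origin, which I would quote (or reprove via a ground-state substitution); for $\beta>N$ I would pass to the Kelvin transform $\widetilde u(x)=u(x/|x|^2)$, which converts the weight $|x|^{-\beta}$ into $|x|^{\beta-2N}$ and multiplies the Gagliardo kernel by the explicit factor $(|x|\,|y|)^{\beta-N}$, so that the difficulty moves to infinity, where it is absorbed by the long-range part of the weighted Gagliardo integral --- the exponent $\beta=N$, which is scaling-critical and self-dual under Kelvin, being excluded. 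For (T2), $D=\RR^N\setminus\overline G$ with $G$ bounded Lipschitz, I would localise as in (T1), but now the charts near $\partial G$ are graph-domain pieces, so only $\beta\ne1$ is needed there (the interior of $D$ is unbounded, so no bounded region forces $\beta>1$), while on $\{|x|\ge R_0\}$ one has $\delta_D(x)\asymp|x|$ and the situation is that of (T4) at infinity, requiring $\beta\ne N$; the finitely many intermediate scales contribute boundedly. The main obstacle I anticipate is twofold: making the line-to-half-space passage rigorous --- the purely vertical seminorm is not dominated by the full one by any Fubini argument, so the transverse averaging (equivalently, the Whitney chaining) must be organised so that the relevant geometric series converges, which is precisely the point where $\beta\ne1$ is used --- and keeping every distortion constant uniform through the bi-Lipschitz flattenings and the Kelvin transform, so that the final constant depends only on $D$, $\beta$, $N$ and $q$.
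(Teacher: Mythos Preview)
This theorem is quoted from \cite{dyd} rather than proved in the paper, but the paper's own \cref{Main theorem,Main Theorem Point Hardy,Main Theorem Other Hardy} generalise it to the Orlicz setting by following Dyda's original method, which is quite different from yours. Dyda does not reduce to one dimension or flatten the boundary; instead he splits $\Omega$ into the ``good set'' $G$ of \eqref{ G set}, on whose complement the Hardy integrand is pointwise controlled by the Gagliardo integrand (\cref{property1}), and handles $G$ through a dyadic layer decomposition $\{A_j\}$: \cref{property3} gives $\int_{G\cap A_j}A(|f|/\delta^s)\le\gamma\int_{A_{j\pm m}}A(|f|/\delta^s)$ with $\gamma<1$, and the iteration of \cref{iterative process} telescopes this into the desired bound. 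The exclusions $\beta=1$ and $\beta=N$ appear precisely as the values at which no choice of $m$ yields $\gamma<1$. Your flattening-plus-partition-of-unity route is closer to how local Hardy inequalities are usually localised; Dyda's argument avoids cut-offs entirely and runs the same good-set machinery through all four geometries.

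Your outline has a genuine gap in (T4) for $\beta>N$. The Kelvin inversion does send the left side to $\int|\tilde u|^q|x|^{\beta-2N}\,dx$ and multiply the kernel by $(|x|\,|y|)^{\beta-N}$, but the kernel exponent remains $N+\beta$, not $N+(2N-\beta)$: the inequality you obtain for $\tilde u$ is merely the Kelvin image of the one you started with, not the classical fractional Hardy with parameter $2N-\beta<N$. Stripping the extra weight $(|x|\,|y|)^{\beta-N}$ using $\mathrm{supp}\,\tilde u\subset\{|x|\ge r_0\}$ leaves a constant depending on $r_0$, hence on $u$. The paper (following Dyda) treats this case directly by the annular good-set iteration of \cref{hardy lambda infty,hardy lambda zero}, and there the condition $\beta\ne N$ is exactly what secures the contraction $\gamma<1$. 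A secondary caution for (T1): absorbing $\|u\|_{L^q}^q$ through the regional Poincar\'e inequality is potentially circular, since here (\cref{hardy to poincare}) and commonly elsewhere that inequality is itself deduced \emph{from} Hardy.
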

    \noindent The question of best constant in fractional Hardy inequality, that is  the exact value of $FOHI(s,t^q)$, was first addressed in \cite[Theorem~1]{bogdan} for upper half space. Heinig \textit{et al.} \cite[Theorem~3.1]{kufner4} and Kufner \textit{et al.} \cite[Theorem~5.23]{kufner2} studied one dimensional $FOHI(s,t^q)$ between two weighted $L^q$ spaces. Reader may refer to \cite{brasco, chen, dyda4, dyda3, dyda2, edmunds, frank1, frank, kufner4, hurri, ihnatsyeva, loss} and references therein for more information related to fractional Hardy inequality. \smallskip
        
        Concerning  $RFOPI(s,A)$ and $FOPI(s, A)$, we start with the trivial observation that\linebreak
        $P_{N,s,A }^2(D) \geq P_{N,s,A }^1(D),$
        that  is $FOPI(s,A)$ holds whenever $RFOPI(s,A)$ is true. For a bounded domain $D$,  $RFOPI(s, t^2)$ is true  if and only if $2s>1$ \cite[Proposition~3.2]{chen2}  whereas $FOPI(s,t^2)$ holds for all $s\in (0,1)$ \cite[Theorem~1.2]{Ind-Roy}. It was first established in \cite[Lemma~1]{yeressian} that $FOPI(s,t^2)$ is true for all values of $s$, if the domain is an infinite strip i.e. $D = (0,1) \times \mathbb{R}^{N-1}$, though the best constant was not established. The best constant for the above case is obtained in \cite[Theorem~1.3]{Ind-Roy}. $RFOPI(s, t^q)$ for strip like domain is  further studied in \cite[Theorem~1.1]{Ind}, where it is established that $RFOPI(s,t^2)$ is true if and only if $2s >1$. Sufficient criteria on domains for $RFOPI(s, t^2)$ and $FOPI(s, t^2)$ to hold, is provided in \cite[Theorem~1.1-1.4]{Ind} and \cite[Theorem~1.2]{Ind-Roy} respectively. For other related works on fractional Poincar\'e inequality and its applications, we refer the reader to \cite{chen,Ind-Roy,chowdhury}, and the references therein.\smallskip
        
        As of now the theory of fractional Orlicz-Sobolev spaces is still at an early stage of development. Bonder-Salort \cite[Section~2.2]{Bon} defined the space $W^{s,A}(\RR^N)$ and established basic properties of the space. For an open bounded set $D\subseteq\RR^N$, the space $\lbrace f\in L^{A}(D) \ | \ M_{W^{s,A}(\RR^N)}(f)<\infty \rbrace$ and corresponding fractional Laplacian have been considered in \cite{salort2,salort}. The fractional Orlicz-Sobolev space $W^{s,A}(D)$ has appeared in Bahrouni \textit{et al.} \cite[Section~2]{Bahrouni} for any open bounded set $D\subseteq\RR^N$. For other works on fractional Orlicz-Sobolev spaces one may refer to \cite{AlCiPiSl1,AlCiPiSl2,AzBeSr,BaOu,BaOuTa,BaSa,CoPa,FeHaRi,MaSaVe}. Salort \cite[Theorem~1.1]{sal} proved $FOHI$ in one dimension. In \cite[Theorem~7.4]{AlCiPiSl}, the authors proved $FOHI$ in the special case where the domain is $\RR^N$. Fractional Orlicz Hardy inequality and regional fractional Orlicz Poincar\'e inequality for any domain or fractional Orlicz Poincar\'e inequality for unbounded domains have not yet appeared in the literature (apart from \cite{sal,AlCiPiSl} as mentioned above). For $A\in\D_2$, an open bounded set $D$, $s\in(0,1)$, the following version of $FOPI(s,A)$ is proved in \cite[Theorem~6.1 and Corollary~6.2]{Bon}:
        $$
        M_{L^{A}(D)}(f)\leq c M_{W^{s,A}(\RR^N)}(f), \hspace{3mm} f \in L^A(D)
        $$
        where $c=c(A,s,N)>0$. Similar results are also proved in \cite{salort2,salort} for general $N$-function (without $\D_2$ condition).
        \smallskip
        
        Primary  aim of this article is to  generalize \cref{dyda's theorem} in the fractional Orlicz setting. This is achieved via \cref{Main theorem,Main Theorem Point Hardy,,Main Theorem Other Hardy}. \Cref{Main theorem} also provides sufficient condition on $A$ and $s$ for $RFOPI(s,A)$ to hold on any bounded Lipschitz domain, whereas \cref{Main theorem not true} gives sufficient condition, on bounded Lipschitz domains, for $FOHI(s,A)$ and $RFOPI(s,A)$ not to hold. As an application of \cref{Main theorem,Main theorem not true}, at the end of \cref{section hardy} (see \cref{table}), we give several examples of $A\in\D_2$ and respective ranges of $s$ for which Hardy and regional Poincar\'e inequality is true or false. In particular for $N$-functions $t^q,\ q>1$  and  $(1+t)\log(1+t) -t$ a complete answer, for all values of $s \in (0,1)$ can be provided for $FOHI(s,A)$. At this point, we would like to mention that the line of argument in the proofs of \cref{Main theorem,Main Theorem Point Hardy,,Main Theorem Other Hardy} are adapted from \cite{dyd} where Dyda, in fact, predicted the possibility of his methods being generalized. Secondary objective of this article is to study $RFOPI(s,A)$ and $FOPI(s,A)$ for unbounded domains in $\RR^N$. In this direction our results are \cref{1-dim poincare,Sufficient condition}. In \cref{1-dim poincare} we give complete characterization of domains in 1-dimension for which $RFOPI(s,A)$ holds, provided $\lim\limits_{\lambda\to0+}\alpha_{s,A}(\lambda)=0$ (see \cref{alpha fnc}). \Cref{Sufficient condition} provides different sufficient criteria on domains for $FOPI(s,A)$ and $RFOPI(s,A)$ to hold. As an application of \cref{Sufficient condition}, at the end of \cref{section poincare} we provide several non-trivial examples of domains for which $RFOPI(s,A)$ and $FOPI(s,A)$ holds. An adapted version of a change of variable formula introduced in \cite[Lemma~2.4]{loss} is the key ingredient in the proof of \cref{Sufficient condition} which can be regarded as the fractional Orlicz analogue of the results obtained in \cite[Theorem~1.2]{Ind-Roy} and \cite[Theorem~1.3]{Ind}.\smallskip
        
         Our main results are stated below. \smallskip
        
        \begin{theorem}\label{Main theorem}
            Let $D\subseteq\RR^N$ be a bounded Lipschitz domain, $A\in\D_2$, $s\in(0,1)$ and\\
             $\liminf\limits_{\lambda\to
                0+}\alpha_{s,A}(\lambda)=0 $, where $\alpha_{s,A}:[0,\infty)\to\RR$ is defined by
            \begin{equation}\label{alpha fnc}
            \alpha_{s,A}(\lambda):=\sup\limits_{t\in[0,\infty)}\frac{A(\lambda
                t)}{\lambda^{\frac{1}{s}}A(t)}.
            \end{equation}
            Then $FOHI(s,A)$ and $RFOPI(s,A)$ holds, that is $H_{N,s,A}(D)>0$ and
            $P^1_{N,s,A}(D)>0$.
        \end{theorem}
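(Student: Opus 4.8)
The plan is to adapt Dyda's argument for \cref{dyda's theorem}. First observe that $RFOPI(s,A)$ follows at once from $FOHI(s,A)$ on a bounded set: since $\delta_D\le R:=\operatorname{diam}(D)<\infty$ we have $|f(x)|\le R^{s}|f(x)|/\delta_x^{s}$ for all $x\in D$, so iterating $A(2t)\le pA(t)$ gives $A(|f(x)|)\le p^{k}A(|f(x)|/\delta_x^{s})$ for $2^{k}\ge R^{s}$, whence $M_{L^A(D)}(f)\le p^{k}\I D A(|f(x)|/\delta_x^{s})\,dx$ and therefore $P^1_{N,s,A}(D)\ge p^{-k}H_{N,s,A}(D)$. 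Thus it suffices to prove $H_{N,s,A}(D)>0$, i.e.
\[
\I D A\!\left(\frac{|f(x)|}{\delta_x^{s}}\right)dx\ \le\ C\,M_{W^{s,A}(D)}(f),\qquad f\in\test(D),
\]
with $C$ independent of $f$.

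I would prove this by localization. Cover $\overline D$ by one open set $U_0$ with $\overline{U_0}\subseteq D$ together with boundary balls $B_1,\dots,B_m$ in which, after a rigid motion, $D\cap B_i$ is the part of a region lying above the graph of a Lipschitz function; take a partition of unity $1=\sum_{i=0}^{m}\psi_i$ subordinate to this cover. Convexity of $A$ gives $A(\sum a_i)\le\frac1{m+1}\sum A((m+1)a_i)$, and $\Delta_2$ converts this into $M_{W^{s,A}(D)}\!\big(\sum\psi_i f\big)\lesssim\sum_i M_{W^{s,A}(D)}(\psi_i f)$, with a matching lower bound for the left‑hand modular, so it is enough to handle each $\psi_i f$. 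The interior piece $\psi_0 f$ is supported in a compact $K\Subset D$, where $\delta_x\ge c>0$, so $A(|\psi_0 f|/\delta^{s})\le A(c^{-s}|\psi_0 f|)\lesssim A(|\psi_0 f|)$ by $\Delta_2$, and a chaining/Poincaré estimate along a fixed finite chain of balls from $K$ to $\partial D$ (where $f\equiv 0$), using $\Delta_2$ to absorb the chain length, bounds $\I K A(|\psi_0 f|)$ by $M_{W^{s,A}(D)}(f)$. For each boundary piece I would flatten the graph by a bi‑Lipschitz map; under such a map both $M_{W^{s,A}}$ and the distance function $\delta$ change only by bounded multiplicative factors (again using $\Delta_2$ for the Jacobian), reducing matters to the model estimate: for $g$ with bounded support in $\overline{\RR^N_+}$,
\[
\I{\RR^N_+}A\!\left(\frac{|g(x)|}{x_N^{s}}\right)dx\ \le\ C\,M_{W^{s,A}(\RR^N_+)}(g),
\]
with $C$ independent of the support of $g$.

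The model estimate I would get from Fubini in the last variable plus a one‑dimensional boundary Hardy inequality $\int_0^{\infty}A(|h(t)|/t^{s})\,dt\le C\iint_{(0,\infty)^2}A(|h(t)-h(\tau)|/|t-\tau|^{s})\,|t-\tau|^{-1}\,dt\,d\tau$, followed by a ``nearby–point averaging'' comparison showing that $\int_{\RR^{N-1}}[\text{one--dimensional seminorm of }t\mapsto g(x',t)]\,dx'\lesssim M_{W^{s,A}(\RR^N_+)}(g)$: for $x=(x',t)$ and $y=(z',\tau)$ with $|x'-z'|\le|t-\tau|$ one has $|x-y|\approx|t-\tau|$, so averaging $|g(x',t)-g(x',\tau)|\le|g(x',t)-g(z',\tau)|+|g(z',\tau)-g(x',\tau)|$ over such $z'$ and invoking $\Delta_2$ does the job (the second increment by downward induction on the dimension). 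The hypothesis on $A$ enters in the one‑dimensional inequality: partition $(0,\infty)$ into $I_k=[\lambda^{k+1},\lambda^{k}]$, write $h$ on $I_k$ as a telescoping sum of differences of averages over $I_k,I_{k+1},\dots$ until $h$ vanishes, estimate each such difference by the Gagliardo modular over the adjacent dyadic block (Jensen plus $\Delta_2$), and sum; the resulting geometric‑type series converges precisely because one may choose the scale $\lambda$ with $\alpha_{s,A}(\lambda)$ as small as one wishes, which is exactly $\liminf_{\lambda\to0+}\alpha_{s,A}(\lambda)=0$. (For $A(t)=t^{q}$ this reads $sq>1$, i.e. $\beta>1$ as in (T1).) The same scaling control $A(\lambda t)\le\alpha_{s,A}(\lambda)\lambda^{1/s}A(t)$ with $\lambda$ small is what makes the deep‑interior term and the localization errors absorbable — after using $|f|\le(\operatorname{diam}D)^{s}|f|/\delta^{s}$ on $D$ they are dominated by $A$ of $\lambda\cdot(|f|/\delta^{s})$ — rather than merely bounded by a fixed multiple of the left‑hand side.

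The main obstacle I anticipate is the sharp one‑dimensional Orlicz Hardy inequality and, more delicately, ensuring that its constant survives the bi‑Lipschitz flattening and the partition of unity: the localization unavoidably produces a term comparable to $M_{L^A(D)}(f)$, and it is only the quantitative smallness supplied by $\alpha_{s,A}(\lambda)$, combined with careful $\Delta_2$ bookkeeping, that renders this term absorbable. Getting the order of quantifiers right there — fix $\lambda$ first, then the chart radii and the partition of unity, then run the estimates — is the crux, and is precisely what the two hypotheses $A\in\Delta_2$ and $\liminf_{\lambda\to0+}\alpha_{s,A}(\lambda)=0$ are tailored to accommodate.
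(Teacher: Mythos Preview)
Your route is genuinely different from the paper's, and two of its steps do not close as written.

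First, the ``nearby--point averaging'' comparison
$\int_{\RR^{N-1}}[\,t\mapsto g(x',t)\,]_{s,A}\,dx'\lesssim M_{W^{s,A}(\RR^N_+)}(g)$
is circular in the form you describe. Splitting $|g(x',t)-g(x',\tau)|$ through $g(z',\tau)$ and averaging over $|z'-x'|\le|t-\tau|$ indeed controls the first increment by $M_{W^{s,A}}$, but the horizontal increment, after integrating out $t$, produces $\int_{\tau} M_{W^{s,A}(\RR^{N-1})}(g(\cdot,\tau))\,d\tau$; applying the same trick to that quantity throws you back onto the vertical one--dimensional integral you started from, with a constant that is \emph{not} small. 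The ``downward induction on the dimension'' therefore does not terminate. One can rescue this by averaging over $|z'-x'|\le K|t-\tau|$ with $K$ small and invoking $A(K^{s}t)\le\alpha_{s,A}(K^{s})K\,A(t)$ to make the recursive constant $<1$, but then the hypothesis $\liminf_{\lambda\to0}\alpha_{s,A}(\lambda)=0$ is used here as well, not only in the one--dimensional Hardy step. Second, the partition--of--unity error is not absorbable by the mechanism you propose: cutting by $\psi_i$ produces a remainder of size $C'M_{L^A(D)}(f)$ with $C'$ governed by $\|\nabla\psi_i\|_\infty$ and $\operatorname{diam}(D)$, and writing $A(|f|)\le A(\lambda\,|f|/\delta^{s})$ forces $\lambda=\operatorname{diam}(D)^{s}$, a \emph{fixed} number attached to the domain; rescaling $D$ to make $\lambda$ small simultaneously blows up $C'$, so the product $C'\alpha_{s,A}(\lambda)\lambda^{1/s}$ need not be $<1$.

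The paper avoids both obstacles by following Dyda's scheme directly in each Lipschitz box $Q_z(\rho)$, with no cutoffs and no reduction to one dimension. One introduces the ``good set'' $G$ on which the Hardy integrand is already dominated by the local Gagliardo energy (\cref{property1}), and on the complement one compares $\int_{A_j\cap G}A(|f|/\delta^{s})$ with $\int_{A_{j+m}}A(|f|/\delta^{s})$ via \cref{property3}; the factor arising is exactly $\alpha_{s,A}(2^{(1-m)s}(1+\mathrm{Lip}(D))^{s})$, made $<\tfrac12$ by choosing $m$ large, and the iteration of \cref{iterative process} closes. The interior region $\{\delta\ge\tilde\rho\}$ is then handled by one more application of \cref{property3} with $E_2$ the boundary collar --- again no cutoff, hence no $M_{L^A}$ error to absorb. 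If you want to salvage your outline, replace the partition of unity by a finite overlapping cover of the collar by Lipschitz boxes (as in the paper), and either prove the sectional comparison with the extra small parameter $K$ or bypass it entirely by running the dyadic Hardy argument already in $N$ dimensions.
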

        
        \begin{theorem}\label{Main theorem not true}
            Let $D\subseteq\RR^N$ be a bounded Lipschitz domain, $A$ be an $N$-function and
            \begin{equation}\label{integralcond} 
            \lim\limits_{\varepsilon\to0+}\varepsilon\I{0}^{\varepsilon^{-s}}\frac{A(z)}{z}dz=\beta\in\RR.
            \end{equation}
            Then
            \begin{enumerate}
                \item if $\beta=0$, then both $FOHI(s,A)$ and $RFOPI(s,A)$ do not hold,
                \item if $\beta\in(0,\infty)$, then $FOHI(s,A)$ does not hold.
            \end{enumerate}
        \end{theorem}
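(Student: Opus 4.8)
The plan is to disprove the relevant inequalities by producing, in each case, an explicit one‑parameter family of compactly supported Lipschitz test functions along which the defining quotient of $H_{N,s,A}(D)$ (and, when $\beta=0$, also of $P^1_{N,s,A}(D)$) tends to $0$; mollifying at a scale smaller than the distance from the support to $\partial D$ then yields genuine $\test(D)$ competitors without spoiling any estimate, since mollification preserves both the $L^\infty$ bound and the Lipschitz bound and hence every pointwise bound on differences used below. The computational backbone is the substitution $r=w^{-1/s}$, which turns the radial part of the Gagliardo integral into $\Lambda(T):=\int_0^T\tfrac{A(z)}{z}\,dz$; this is finite because $z\mapsto A(z)/z$ is nondecreasing with limit $0$ at the origin, and hypothesis \eqref{integralcond} says precisely that $T^{-1/s}\Lambda(T)\to\beta$ as $T\to\infty$. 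Two elementary consequences will be used repeatedly: $\Lambda(T)\le A(T)$ always, and $A(T)\lesssim 1+T^{1/s}$ whenever $\beta<\infty$ (from $A(T)\le 2\Lambda(2T)$).

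For part (1), I would fix a small $\tau_0$ and, for $\tau\in(0,\tau_0)$, take $f_\tau:=\min\{1,\max\{0,(\delta_x-\tau)/\tau\}\}$, so that $f_\tau$ equals $1$ on $\{\delta_x\ge 2\tau\}$, vanishes on $\{\delta_x\le\tau\}$, is compactly supported in $D$, and satisfies $|f_\tau(x)-f_\tau(y)|\le\min\{1,|x-y|/\tau\}$ (as $\delta$ is $1$‑Lipschitz). Then $\int_D A(|f_\tau|)\,dx$ and $\int_D A(|f_\tau|/\delta_x^s)\,dx$ are both bounded below by positive constants independent of $\tau$ (evaluate on $\{\delta_x>2\tau_0\}$). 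The key step is the energy bound $M_{W^{s,A}(D)}(f_\tau)\lesssim_{D,N,s}\tau\,\Lambda(\tau^{-s})$: the integrand vanishes unless $f_\tau(x)\ne f_\tau(y)$, which forces either one of the points into the collar $\{\tau<\delta<2\tau\}$ (of measure $\lesssim\tau$ by Lipschitzness of $\partial D$) or else $x\in\{\delta_x\ge2\tau\}$, $y\in\{\delta_y\le\tau\}$ and hence $|x-y|\ge\tau$; in either regime, integrating the free point over $\RR^N$ and using $r=w^{-1/s}$ on $\{r>\tau\}$ together with $v=r^{1-s}/\tau$ on $\{r<\tau\}$ produces $\Lambda(\tau^{-s})$, which is then multiplied by the $\lesssim\tau$ measure of the constrained point. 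Since $\tau\,\Lambda(\tau^{-s})=(\tau^{-s})^{-1/s}\Lambda(\tau^{-s})\to\beta=0$, the energy vanishes, so $H_{N,s,A}(D)=P^1_{N,s,A}(D)=0$.

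For part (2), I would localise near $\partial D$ by means of a logarithmic plateau. With $\sigma\downarrow0$, $L:=\log(1/\sigma)$ and a fixed $c_0<\operatorname{diam}D$, set $\psi_\sigma$ to be $0$ on $[0,\sigma^2]$, equal to $L^{-1}\log(t/\sigma^2)$ on $[\sigma^2,\sigma]$, equal to $1$ on $[\sigma,c_0]$, and a fixed smooth cutoff on $[c_0,2c_0]$, and let $u_\sigma:=\psi_\sigma\circ\delta$. Using that $|\{x\in D:\delta_x\in(a,b)\}|\asymp_D b-a$ for small $a<b$, together with $r=w^{-1/s}$ and an integration by parts $\int\Lambda'(w)w^{-1/s}\,dw$, the Hardy denominator satisfies $\int_D A(|u_\sigma|/\delta_x^s)\,dx\gtrsim_D\int_\sigma^{c_0}A(t^{-s})\,dt\gtrsim_s\tfrac12\beta\log(1/\sigma)-C\to\infty$, the divergence coming from $\Lambda(w)\ge\tfrac12\beta w^{1/s}$ for large $w$, i.e.\ from $\beta>0$. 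For the energy I would use the bound $|u_\sigma(x)-u_\sigma(y)|\le\min\{1,|x-y|/(Lm)\}$ with $m:=\max\{\min\{\delta_x,\delta_y\},\sigma^2\}$ (valid on the relevant range since $|\psi_\sigma(a)-\psi_\sigma(b)|\le L^{-1}|a-b|/\max\{\min\{a,b\},\sigma^2\}$ there, plus a fixed‑scale contribution from the top cutoff), run the same radial computation as in part (1) to obtain $\Lambda((Lm)^{-s})$, and finally integrate over the level‑set parameter $m$ using $\Lambda(T)\lesssim1+T^{1/s}$ — the point where $\beta<\infty$ enters — to see that every resulting integral is bounded uniformly in $\sigma$; hence $\sup_\sigma M_{W^{s,A}(D)}(u_\sigma)<\infty$. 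Dividing, the quotient tends to $0$ and $FOHI(s,A)$ fails.

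The main obstacle is the uniform energy bound in part (2): one must show that the Gagliardo energy of the logarithmic profile stays bounded while its Hardy functional diverges, which is exactly where both halves of $\beta\in(0,\infty)$ are needed — $\beta>0$ forces divergence of the Hardy term, $\beta<\infty$ (via $A(T)\lesssim1+T^{1/s}$) caps the energy — and where the Lipschitz regularity of $\partial D$ is used, through the measures of tubular collars and of level sets of $\delta_x$. Since $A$ is not assumed to satisfy $\Delta_2$, it is essential that in every estimate the argument of $A$ is never dilated, which is why all the computations are organised around the explicit $\min\{\cdot,\cdot\}$ difference bounds.
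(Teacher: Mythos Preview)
Your argument for part~(1) is essentially the paper's: the same one\nobreakdash-parameter plateau family (the paper writes it as $f_\varepsilon\in\test(D)$ with $f_\varepsilon\equiv1$ on $D\setminus D_\varepsilon$, $0\le f_\varepsilon\le1$, $|\nabla f_\varepsilon|\le c/\varepsilon$), the same energy estimate $M_{W^{s,A}(D)}(f_\varepsilon)\lesssim \varepsilon\,\Lambda(c\varepsilon^{-s})$ via the split $|x-y|\lessgtr\varepsilon$ and the substitutions $z=c_1r^{1-s}/\varepsilon$ and $z=r^{-s}$, and the denominators bounded below by Fatou.

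For part~(2) you take a genuinely different and more elaborate route than the paper. The paper simply \emph{reuses the family $f_\varepsilon$ from part~(1)}: the energy bound already gives $M_{W^{s,A}(D)}(f_\varepsilon)\to\beta<\infty$, and L'H\^opital applied to the hypothesis yields $\varepsilon A(\varepsilon^{-s})\to\beta$, hence $A(n^s)\ge\tfrac{\beta}{2}n$ for large $n$; summing over the shells $D_{1/n}\setminus D_{1/(n+1)}$ (each of measure $\asymp 1/n^2$) shows $\int_D A(\delta_x^{-s})\,dx=\infty$, so by Fatou the Hardy denominator along $f_\varepsilon$ diverges. No second construction is needed. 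Your logarithmic profile $u_\sigma$ is correct as well --- the bound $|u_\sigma(x)-u_\sigma(y)|\le\min\{1,|x-y|/(Lm)\}$ holds, and after symmetrising so that $m=\max\{\delta_x,\sigma^2\}$ and integrating the free variable radially one obtains $\int_0^\sigma\Lambda\bigl((L\max\{t,\sigma^2\})^{-s}\bigr)\,dt$, which is $O(1)$ via $\Lambda(T)\lesssim1+T^{1/s}$ since the two pieces contribute $\sigma^2/(L\sigma^2)+L^{-1}\log(1/\sigma)=O(1)$ --- but this is the device one deploys when the Hardy weight is \emph{not} known to be non\nobreakdash-integrable, which is not the situation here. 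In short: your approach proves a bit more (uniformly bounded energy together with a diverging denominator) at the cost of a substantially longer case analysis; the paper's approach exploits that $\beta>0$ already forces $\int_D A(\delta_x^{-s})\,dx=\infty$ and lets the elementary plateau family from part~(1) finish the job in a few lines.
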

        
        \begin{theorem}\label{Main Theorem Point Hardy}
            Let $s\in(0,1)$, $A\in\D_2$ and $\alpha_{s,A}$ be as in \cref{alpha fnc}. Assume $\liminf\limits_{\lambda\to0+}\lambda^{\frac{1-N}{s}}\alpha_{s,A}(\lambda)=0$ or $\liminf\limits_{\lambda\to\infty}\lambda^{\frac{1-N}{s}}\alpha_{s,A}(\lambda)=0$.
            Then $FOHI(s,A)$ holds in $\RR^N\setminus \lbrace 0 \rbrace$, that is $H_{N,s,A}(\RR^N\setminus \lbrace 0 \rbrace)>0$.
        \end{theorem}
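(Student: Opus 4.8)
My plan is a self-improvement (absorption) argument exploiting the dilation invariance of $D:=\RR^N\setminus\{0\}$, in the spirit of Dyda's treatment of the complement of a point in \cite{dyd}. Throughout $\delta_x=|x|$; fix $f\in\test(D)$ with $f\neq 0$, and let $p$ be a $\D_2$-constant for $A$ (which also guarantees $\alpha_{s,A}(\lambda)<\infty$ for all $\lambda>0$), and note that $\I{D}A(|f(x)|/|x|^s)\,dx<\infty$ since the integrand is bounded with support in a compact subset of $D$. I would treat the hypothesis $\liminf_{\lambda\to0+}\lambda^{\frac{1-N}{s}}\alpha_{s,A}(\lambda)=0$; the case of the limit at $+\infty$ is handled the same way with a fixed dilation factor $\lambda>1$ in place of $\lambda\in(0,1)$ below (the constants then depend on the chosen $\lambda$ but remain finite).

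\textbf{Step 1 (dilation error is absorbable).} First, fixing $\lambda\in(0,1)$, I would compare $f$ with its dilate $x\mapsto f(\lambda x)$: from $|f(x)|\le|f(x)-f(\lambda x)|+|f(\lambda x)|$, convexity of $A$, the identity $|x-\lambda x|=(1-\lambda)|x|$ (so $|x|^{-s}\le|x-\lambda x|^{-s}$), the $\D_2$ bound $A(2t)\le pA(t)$, the change of variables $z=\lambda x$ in the term involving $f(\lambda x)$, and \cref{alpha fnc} in the form $A(\lambda^s t)\le\lambda\,\alpha_{s,A}(\lambda^s)A(t)$, one obtains
\begin{equation*}
\I{D}A\!\Big(\tfrac{|f(x)|}{|x|^s}\Big)dx\le\tfrac{p}{2}\I{D}A\!\Big(\tfrac{|f(x)-f(\lambda x)|}{|x-\lambda x|^s}\Big)dx+\tfrac{p}{2}\,\lambda^{1-N}\alpha_{s,A}(\lambda^s)\I{D}A\!\Big(\tfrac{|f(x)|}{|x|^s}\Big)dx .
\end{equation*}
Since $\lambda^{1-N}\alpha_{s,A}(\lambda^s)=(\lambda^s)^{\frac{1-N}{s}}\alpha_{s,A}(\lambda^s)$, the hypothesis lets me fix $\lambda\in(0,1)$ with $\tfrac{p}{2}\lambda^{1-N}\alpha_{s,A}(\lambda^s)\le\tfrac12$; as the left side is finite, the last term is absorbed and
\begin{equation}\label{eq:dilabs}
\I{D}A\!\Big(\tfrac{|f(x)|}{|x|^s}\Big)dx\le p\I{D}A\!\Big(\tfrac{|f(x)-f(\lambda x)|}{|x-\lambda x|^s}\Big)dx .
\end{equation}

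\textbf{Step 2 (a single dilate controls $M_{W^{s,A}(D)}$).} With $\lambda$ now fixed, I would bound the right side of \eqref{eq:dilabs} by $M_{W^{s,A}(D)}(f)$. For $x\in D$ introduce the ``lens'' $E_x:=B(x,2|x-\lambda x|)\cap B(\lambda x,2|x-\lambda x|)\setminus\{0\}\subseteq D$; elementary geometry gives $c_N|x-\lambda x|^N\le|E_x|\le C_N|x-\lambda x|^N$, and for $y\in E_x$ one has $|x-y|\le 2|x-\lambda x|$ and $|\lambda x-y|\le 2|x-\lambda x|$, hence (using $s<1$) $|x-\lambda x|^{-s}\le 2|x-y|^{-s}$ and $|x-\lambda x|^{-s}\le 2|\lambda x-y|^{-s}$. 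For $y\in E_x$, the triangle inequality $|f(x)-f(\lambda x)|\le|f(x)-f(y)|+|f(y)-f(\lambda x)|$, convexity, and $A(4t)\le p^2A(t)$ then yield
\begin{equation*}
A\!\Big(\tfrac{|f(x)-f(\lambda x)|}{|x-\lambda x|^s}\Big)\le\tfrac{p^2}{2}A\!\Big(\tfrac{|f(x)-f(y)|}{|x-y|^s}\Big)+\tfrac{p^2}{2}A\!\Big(\tfrac{|f(y)-f(\lambda x)|}{|\lambda x-y|^s}\Big).
\end{equation*}
Averaging over $y\in E_x$ and using $|E_x|^{-1}\le c_N^{-1}|x-\lambda x|^{-N}\le c_N^{-1}2^N\min\{|x-y|^{-N},|\lambda x-y|^{-N}\}$ on $E_x$ turns the two averages into integrals against $\tfrac{dy}{|x-y|^N}$ and $\tfrac{dy}{|\lambda x-y|^N}$; enlarging the domain of integration from $E_x$ to $D$ and then integrating in $x$ (with $z=\lambda x$ in the term from the second summand) gives
\begin{equation*}
\I{D}A\!\Big(\tfrac{|f(x)-f(\lambda x)|}{|x-\lambda x|^s}\Big)dx\le C(N,p)\,(1+\lambda^{-N})\,M_{W^{s,A}(D)}(f).
\end{equation*}
Combined with \eqref{eq:dilabs}, this bounds the quotient $M_{W^{s,A}(D)}(f)\big/\I{D}A(|f(x)|/|x|^s)dx$ below by a constant depending only on $N,s,A$; taking the infimum over $f\in\test(D)$, $f\neq0$, gives $H_{N,s,A}(D)>0$.

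\textbf{Expected main obstacle.} Step 1 and the $\D_2$ bookkeeping are routine. The crux is Step 2: one must check that probing $f$ only along the one-parameter family of pairs $(x,\lambda x)$ still reconstitutes the full Gagliardo kernel $|x-y|^{-N}$. This works because on the lens $E_x$ the three quantities $|x-\lambda x|$, $|x-y|$, $|\lambda x-y|$ are mutually comparable while $|E_x|\asymp|x-\lambda x|^N\asymp|x|^N$, so the averaging over $y\in E_x$ does the job; the finiteness of $M_{W^{s,A}(D)}(f)$ for test functions (used implicitly) follows from $\lim_{t\to0}A(t)/t=0$ together with $s<1$. Finally, the hypothesis is exactly what makes the dilation in Step 1 carry the small coefficient $\lambda^{1-N}\alpha_{s,A}(\lambda^s)=(\lambda^s)^{(1-N)/s}\alpha_{s,A}(\lambda^s)$; for $A(t)=t^q$ one has $\alpha_{s,A}(\lambda)=\lambda^{\,q-1/s}$, so the condition reduces to $sq\neq N$, recovering case (T4) of \cref{dyda's theorem}.
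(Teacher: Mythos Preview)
Your argument is correct and takes a genuinely different route from the paper's. The paper proves \cref{Main Theorem Point Hardy} by reusing the good/bad set machinery of \cref{section hardy}: it decomposes $B(0,r)^c$ into dyadic annuli $A_k=B(0,2^{k+1}r)\setminus B(0,2^kr)$, applies \cref{property3} with $E_1=A_j$, $E_2=A_{j\pm m}$ to obtain a one--step contraction $\int_{A_j\cap G}A(|f|/\delta^s)\le\tfrac12\int_{A_{j\pm m}}A(|f|/\delta^s)$, and then invokes the iteration scheme of \cref{iterative process} together with \cref{property1} on the complementary set $\Om\setminus G$ (this is the content of \cref{hardy lambda infty,hardy lambda zero}). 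Your proof, by contrast, bypasses the set $G$ and the annular iteration entirely: Step~1 exploits the exact dilation invariance of $\RR^N\setminus\{0\}$ to produce a single self--improving inequality in which the coefficient $\lambda^{1-N}\alpha_{s,A}(\lambda^s)$ is precisely the quantity in the hypothesis, and Step~2 converts the one--parameter discrete difference $f(x)-f(\lambda x)$ into the full Gagliardo energy via the lens average. What the paper's approach buys is modularity: \cref{hardy lambda infty,hardy lambda zero} are stated with a free parameter $r$ and are reused verbatim in the proof of \cref{Main Theorem Other Hardy}(2). What your approach buys is brevity and transparency: it is self--contained, needs only convexity and $\Delta_2$, and makes the role of the hypothesis $\liminf\lambda^{(1-N)/s}\alpha_{s,A}(\lambda)=0$ completely explicit (as you note, for $A(t)=t^q$ it collapses to $sq\neq N$). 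The only point worth recording is that for the case $\liminf_{\lambda\to\infty}$ with $\lambda>2$, the comparison $|x|^{-s}\le|x-\lambda x|^{-s}$ fails, but one picks up instead the harmless fixed factor $(\lambda-1)^s$ inside $A$, absorbed by $\Delta_2$; this is consistent with your remark that ``the constants then depend on the chosen $\lambda$ but remain finite''.
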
  
    
    \begin{theorem}\label{Main Theorem Other Hardy}
        Suppose $D\subseteq\RR^N$ be an open set, $A\in\D_2$, $s\in(0,1)$ and $\alpha_{s,A}$ be as in \cref{alpha fnc}. Then $FOHI(s,A)$ holds true, that is $H_{N,s,A}(D)>0$, in each of the following cases:
        \begin{enumerate}
        \item $D=\lbrace (x',x_N)\in\RR^N\ | \ x'\in\RR^{N-1},\ x_N>\Phi(x')\rbrace$, where $\Phi:\RR^{N-1}\to\RR$ is a Lipschitz map and $s,A$ are such that $\liminf\limits_{\lambda\to 0}\alpha_{s,A}(\lambda)=0$ or $\liminf\limits_{\lambda\to \infty}\alpha_{s,A}(\lambda)=0$;
        \item    
        $D^c$ is closure of some bounded Lipschitz domain and $s,A$ are such that $\liminf\limits_{\lambda\to\infty}\lambda^{\frac{1-N}{s}}\alpha_{s,A}(\lambda)=0=\liminf\limits_{\lambda\to0+}\alpha_{s,A}(\lambda)$
        \quad or \quad
        $\liminf\limits_{\lambda\to0+}\lambda^{\frac{1-N}{s}}\alpha_{s,A}(\lambda)=0$
        \quad or \quad
        $\liminf\limits_{\lambda\to\infty}\alpha_{s,A}(\lambda)=0$.
        \end{enumerate}
\end{theorem}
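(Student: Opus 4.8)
\emph{Proof strategy.} The plan is to deduce both cases, by a partition‑of‑unity localisation, from results already in hand—\cref{Main theorem} for the boundary patches and \cref{Main Theorem Point Hardy} for the part near infinity—preceded in case~(1) by a flattening of the boundary; this is the fractional Orlicz analogue of Dyda's deductions of (T3) and (T2) above. Two elementary facts are used throughout: since $A\in\D_2$, an estimate $a\le Cb$ may be moved inside $A$ at the cost of a constant, $A(Cb)\le C'A(b)$; and \cref{alpha fnc} gives $A(\lambda t)\le\alpha_{s,A}(\lambda)\,\lambda^{1/s}A(t)$ for all $t\ge0$, which is precisely what records how the Gagliardo modular changes under a dilation $x\mapsto\lambda x$. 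I also use the fractional Orlicz Poincar\'e inequality on bounded open sets $K$ recalled in the introduction, $M_{L^A(K)}(f)\le c(K)\,M_{W^{s,A}(\RR^N)}(f)$, and the bi‑Lipschitz invariance of $H_{N,s,A}$, which itself follows from these same facts since a bi‑Lipschitz map distorts $|x-y|$, $\delta_D(x)$ and Jacobians only by bounded factors.

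\emph{Case~(1).} The bi‑Lipschitz map $\Psi(x',x_N)=(x',x_N-\Phi(x'))$ carries $D$ onto the half‑space $\RR^N_+=\{x_N>0\}$ and $\delta_D$ onto a function comparable to $\delta_{\RR^N_+}\circ\Psi$, so by bi‑Lipschitz invariance it suffices to prove $H_{N,s,A}(\RR^N_+)>0$. Here I would exploit that $\RR^N_+$ with the weight $\delta^{-s}=x_N^{-s}$ is exactly dilation‑covariant: decompose $\RR^N_+$ by a bounded‑overlap partition of unity $\{\psi_{k,z}\}_{k\in\ZZ,\,z\in\ZZ^{N-1}}$ with $\psi_{k,z}$ supported on the dilate by $2^k$ and translate by $2^kz$ of an enlarged half‑cube $Q=(-1,1)^{N-1}\times(0,1)$, apply \cref{Main theorem} to the bounded Lipschitz domain $Q$ for each piece, and rescale—by the displayed $\alpha_{s,A}$‑estimate the local Hardy constant at scale $2^k$ is controlled by $\alpha_{s,A}(2^{ks})\alpha_{s,A}(2^{-ks})$ times the unit constant. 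Summing over $z$ at a fixed scale is harmless (bounded overlap, and the interactions with the far region are summed using $\int_0^{c}\tfrac{A(\sigma)}{\sigma}\,d\sigma\le A(c)$, valid because $\sigma\mapsto A(\sigma)/\sigma$ is nondecreasing); summing over the scales then gives a series made geometric, hence convergent, by choosing a dilation ratio at which $\alpha_{s,A}$ is small—this is exactly where $\liminf_{\lambda\to0}\alpha_{s,A}(\lambda)=0$ enters. Under the alternative hypothesis $\liminf_{\lambda\to\infty}\alpha_{s,A}(\lambda)=0$ (the half‑cube $Q$ then need not have Hardy, so this is genuinely different) I would use a softer argument in the spirit of Dyda's range $\beta<1$: from the pointwise bound $A(|u(x)|/x_N^s)\le C\int_{\{y_N<0\}}A(|u(x)|/|x-y|^s)\,|x-y|^{-N}\,dy$ (true for every $A\in\D_2$, by integrating over the cone of outward directions and using $\D_2$) and the fact that $u$ vanishes on $\{y_N<0\}$, a good‑$\lambda$ splitting of the resulting integral over $\RR^N_+\times\{y_N<0\}$ according to the size of $|u(x)|$ versus $|u(y)|$ should transfer it to $M_{W^{s,A}(\RR^N_+)}(u)$ at the cost of a term that is absorbable precisely when $\alpha_{s,A}$ is small at infinity.

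\emph{Case~(2).} Write $D^c=\overline\Omega$ with $\Omega$ bounded Lipschitz, and after a translation assume $0\in\Omega$; fix $R_0$ with $\overline\Omega\subset B_{R_0}$. Cover $\partial\Omega$ by finitely many balls $B_1,\dots,B_m$ in which $D$ is, after a rigid motion and a Lipschitz extension of the defining graph, a global graph domain $G_i$, so small that $\delta_D\equiv\delta_{G_i}$ on the half‑balls $\tfrac12B_i$ (the routine geometric point: for $x\in\tfrac12B_i$ the nearest point of $D^c$ lies in $B_i$, where $D^c=G_i^c$); with $B_0=\{\delta_D>\varepsilon\}$ for a fixed small $\varepsilon$, the $\tfrac12B_i$ cover $\RR^N$ and carry a subordinate partition of unity $\{\phi_i\}_{i=0}^m$, and $\operatorname{supp}(u\phi_i)\subset D$ for every $i$ since $\operatorname{supp}u\subset D$. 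For $u\in C_c(D)$, convexity and $\D_2$ give $\int_DA(|u|/\delta_D^s)\le C\sum_i\int_DA(|u\phi_i|/\delta_D^s)$. For $i\ge1$, since $\delta_D=\delta_{G_i}$ on $\operatorname{supp}(u\phi_i)$, case~(1) (or \cref{Main theorem} applied to a bounded Lipschitz completion of $D\cap\tfrac12B_i$, when $\liminf_{\lambda\to0}\alpha_{s,A}=0$) gives $\int_DA(|u\phi_i|/\delta_D^s)\le cM_{W^{s,A}(G_i)}(u\phi_i)$. For $i=0$, on $\operatorname{supp}(u\phi_0)\subset\{\delta_D>\varepsilon\}$ one has $\delta_D(x)\ge c|x|$ (from $\delta_D(x)\ge|x|-R_0$ when $|x|\ge2R_0$, and $\delta_D(x)\ge\varepsilon$ otherwise), so $A(|u\phi_0|/\delta_D^s)\le CA(|u\phi_0|/|x|^s)$ and \cref{Main Theorem Point Hardy} gives $\int_DA(|u\phi_0|/\delta_D^s)\le cM_{W^{s,A}(\RR^N)}(u\phi_0)$. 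The three hypotheses of case~(2) are arranged so that in each case the boundary patches fall under case~(1)/\cref{Main theorem} and the patch at infinity under \cref{Main Theorem Point Hardy}: this is an elementary chain of implications among the $\liminf$ conditions, using $\lambda^{(1-N)/s}\to0$ as $\lambda\to\infty$ and $\to\infty$ as $\lambda\to0$ when $N\ge2$. Finally each $M_{W^{s,A}(\cdot)}(u\phi_i)$ must be bounded by $CM_{W^{s,A}(D)}(u)$: split the double integral into the block where both variables stay in the enlarged patch—there $|u\phi_i(x)-u\phi_i(y)|\le|u(x)-u(y)|+|u(y)|\,|\phi_i(x)-\phi_i(y)|$, and $|\phi_i(x)-\phi_i(y)|\le\min(1,\|\nabla\phi_i\|_\infty|x-y|)$ with convexity and $\D_2$ reduce it to $M_{W^{s,A}(D)}(u)$ plus $\int_{\operatorname{supp}\phi_i\cap D}A(C|u|)$—and the block where one variable escapes (handled again by $\int_0^{c}\tfrac{A(\sigma)}{\sigma}\,d\sigma\le A(c)$); the leftover terms, being integrals over a \emph{fixed} bounded set, are absorbed by the Poincar\'e inequality on bounded sets. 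Summing over $i$ yields $\int_DA(|u|/\delta_D^s)\le CM_{W^{s,A}(D)}(u)$, i.e.\ $H_{N,s,A}(D)>0$.

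The hard part will be the \emph{regional} nature of $M_{W^{s,A}(D)}$ in $H_{N,s,A}(D)$: localising $u$ by a partition of unity generates the commutator terms $|\phi_i(x)-\phi_i(y)|$ and, in the unbounded settings, patch/$D^c$ interactions, and all of these must be dominated by $M_{W^{s,A}(D)}(u)$—not merely by $M_{W^{s,A}(\RR^N)}(u)$—uniformly in $u$, in particular uniformly in $\operatorname{diam}(\operatorname{supp}u)$. That uniformity is exactly what rules out a compactness shortcut and forces, in case~(1), the scale bookkeeping through $\alpha_{s,A}$, the delicate point being to make the two‑parameter sum converge under the mere assumption $\liminf\alpha_{s,A}=0$ (rather than $\lim\alpha_{s,A}=0$) by tuning the dilation ratio, together with making the $\liminf_{\lambda\to\infty}\alpha_{s,A}=0$ branch run; and in case~(2) the analogous delicate estimate $M_{W^{s,A}(G_i)}(u\phi_i)\le CM_{W^{s,A}(D)}(u)$, in which the boundary information carried by the regional form must not be thrown away.
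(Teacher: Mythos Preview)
Your partition-of-unity strategy is different from the paper's, and it is where your ``hard part'' comes from; the paper avoids that hard part entirely by a single observation you have overlooked.

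The key point is that the constants in the local Hardy lemmas (\cref{Hardy in cube}, \cref{hardy lambda infty}, \cref{hardy lambda zero}, \cref{Hardy in cube unbounded}) depend only on $\mbox{Lip}(D)$, $N$, $s$, $A$ and \emph{not} on the size parameter $\rho$ (resp.\ $r$). For a graph domain in case~(1), one may take the Lipschitz box $Q_z(\rho)$ with $\rho$ as large as desired, so given $f\in\test(D)$ one simply chooses $\rho$ so that $\mbox{supp}(f)\subseteq Q_z(\rho)$ and applies \cref{Hardy in cube} (for $\liminf_{\lambda\to0}\alpha_{s,A}=0$) or \cref{Hardy in cube unbounded} (for $\liminf_{\lambda\to\infty}\alpha_{s,A}=0$) once; the resulting constant is independent of $f$. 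No covering, no commutators, no scale-summation. For case~(2) the paper likewise splits $D$ into $D\cap B(0,R)$ and $B(0,R)^c$: on the bounded piece the proof of \cref{Main theorem} goes through verbatim (its iterative process only needs $f\equiv0$ near $\partial\Omega$, which holds), and on the exterior piece \cref{hardy lambda infty} or \cref{hardy lambda zero} is applied directly with $r$ chosen so that remainder terms vanish on $\mbox{supp}(f)$. Under $\liminf_{\lambda\to\infty}\alpha_{s,A}=0$ the boundary strip is handled by \cref{Hardy in cube unbounded} and the remaining set $\{\delta_D>\varepsilon\}$ by a direct annular iteration via \cref{property3,iterative process}. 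Your worry about ``uniformity in $\mbox{diam}(\mbox{supp}\,u)$'' is thus resolved for free.

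By contrast, your route introduces genuine obstacles. In case~(1) your half-cubes $2^k z+2^kQ$ all touch $\{x_N=0\}$ and hence do not form a bounded-overlap cover (each point near the boundary lies in infinitely many of them); the commutator sum $\sum_{k,z}\int A(C|u|2^{-ks})$ then duplicates the Hardy left side with a constant you cannot make small, so the absorption $\int A(|u|/\delta^s)\le C_1 M_{W^{s,A}(D)}(u)+C_2\int A(C_3|u|/\delta^s)$ fails unless $C_2C_3^p<1$. In case~(2) your plan to absorb leftover $\int_K A(|u|)$ terms by ``Poincar\'e on bounded sets'' yields $M_{W^{s,A}(\RR^N)}(u)$ on the right, not the regional $M_{W^{s,A}(D)}(u)$, and the cross term $\int_D\int_{D^c}A(|u(x)|/|x-y|^s)|x-y|^{-N}$ is not obviously controlled by $M_{W^{s,A}(D)}(u)$ without already knowing Hardy. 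These are not fatal to a localisation approach in principle, but they are exactly the difficulties the paper's one-large-box argument sidesteps.
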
  
\smallskip

Before giving the statement of our last result, that is \cref{Sufficient condition}, we define some terminologies that will be required to formulate it in a precise manner.\smallskip

\begin{definition}
    \label{def:finite ball cond}
    We say that a set $D\subseteq\RR^N$ satisfies \textbf{the finite ball
        condition} if $D$ does not contain arbitrarily large balls, that is if
    $$
    BC( D ):=\sup\lbrace r:\, B(x,r)\subseteq D,\; x\in D\rbrace <\infty.
    $$
\end{definition}

\begin{definition}
    Let $\lbrace D_\beta\rbrace _{\beta}$ be a family of sets in $\mathbb{R}^N$,
    where $\beta\in \Lambda$ (some indexing set). We say that the $\mathbf{FOPI(s,A)}$
    \textbf{holds uniformly for}
    $\mathbf{\lbrace D_\beta\rbrace _{\beta}}$ if
    $\displaystyle\inf_\beta P^2_{N,s,A}(D_\beta) > 0$.
\end{definition}
\noindent Let $\omega\in\mathbb{ S}^{N-1}$ and $x\in \omega^\perp$, define $L_{D}(x,
\omega) := \lbrace t
\ | \ x+t\omega \in D \rbrace\subseteq \RR$.

\begin{definition}
    \label{defn:LS} We say an open
    set $D\subseteq\RR^N$ is of \textbf{type} $\mathbf{LS(s,A)}$ if there exists
    $\Sigma\subseteq\mathbb{S}^{N-1}$ with 
    positive $(N-1)$-dimensional Hausdorff measure, such
    that uniform $FOPI(s,A)$ holds for the family $\lbrace L_D(x,\omega) \rbrace_{\omega
        \in\Sigma,\;x\in
        \omega^\perp}$.
\end{definition} 
        
        \begin{theorem}\label{Sufficient condition}
            Let $D\subseteq\RR^N$ be an open set, $s\in(0,1)$, $A$ be an $N$-function and $\alpha_{s,A}$ be as in \cref{alpha fnc}.
            \begin{enumerate}
                \item Assume that $A\in\D_2$ and  $\lim\limits_{\lambda\to 0+}\alpha_{s,A}(\lambda)=0$. Let there exist $\Sigma\subseteq\mathbb{S}^{N-1}$ with positive $(N-1)$-dimensional Hausdorff measure such that $\sup\limits_{\omega
                    \in\Sigma, x\in
                    \omega^\perp}BC(L_D(x,\omega)) <\infty.$ Then the $RFOPI(s,A)$ holds true in $ D $.
                \item Assume that there exist $R,c_1>0$ such that $\mathcal{L}^{N}(B(x,R)\cap D ^{c})>c_1$
                for
                any $x\in D $. Then $FOPI(s,A)$ holds in $ D $ $\forall s\in(0,1)$.
                \item $FOPI(s,A)$ holds if $D$ is an LS(s,A) domain.
            \end{enumerate}
        \end{theorem}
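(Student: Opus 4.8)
The plan is to derive (1) and (3) by a one--dimensional ``slicing'' of the Gagliardo energy along lines --- via the adapted change of variables of \cite[Lemma~2.4]{loss} --- and to derive (2) by a direct estimate exploiting the fat complement. The slicing rests on the identity: for every nonnegative measurable $H$ on $\RR^N\times\RR^N$,
\[
\II{\RR^N\times\RR^N}\frac{H(x,y)}{|x-y|^N}\,dx\,dy \;=\; \tfrac12\I{\mathbb{S}^{N-1}}\I{\omega^\perp}\II{\RR^2}\frac{H(z+t\omega,\,z+\tau\omega)}{|t-\tau|}\,dt\,d\tau\,dz\,d\mathcal{H}^{N-1}(\omega),
\]
the Jacobian $|t-\tau|^{N-1}$ of the parametrisation $x=z+t\omega$, $y=z+\tau\omega$, $\omega=(y-x)/|y-x|$, $z\in\omega^\perp$, having cancelled all but one power of $|x-y|$. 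Taking $H(x,y)=A(|f(x)-f(y)|/|x-y|^s)\mathbf{1}_D(x)\mathbf{1}_D(y)$ and $g_{z,\omega}(t):=f(z+t\omega)$ --- which for $f\in\test(D)$ lies in $\test(L_D(z,\omega))$ on every line --- this reads
\[
M_{W^{s,A}(D)}(f)=\tfrac12\I{\mathbb{S}^{N-1}}\I{\omega^\perp}M_{W^{s,A}(L_D(z,\omega))}(g_{z,\omega})\,dz\,d\mathcal{H}^{N-1}(\omega),
\]
and removing $\mathbf{1}_D(x)\mathbf{1}_D(y)$ gives the same with $M_{W^{s,A}(\RR^N)}(f)$ on the left and the one--dimensional $M_{W^{s,A}(\RR)}(g_{z,\omega})$ on the right. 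In either case, restricting the $\omega$--integral to $\Sigma$ only lowers the right side, while for each fixed $\omega$ the map $(z,t)\mapsto z+t\omega$ is measure preserving, so $\I{\omega^\perp}M_{L^A(\,\cdot\,)}(g_{z,\omega})\,dz=M_{L^A(D)}(f)$. Hence in all three parts it suffices to produce a \emph{uniform} one--dimensional lower bound for $M_{W^{s,A}}(g_{z,\omega})/M_{L^A}(g_{z,\omega})$ as $z$ ranges over $\omega^\perp$ and $\omega$ over $\Sigma$.

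For (1) this amounts to $\inf\{P^1_{1,s,A}(E): E\subseteq\RR \text{ open},\ BC(E)\le L\}>0$, where $L:=\sup_{\omega\in\Sigma,\,z\in\omega^\perp}BC(L_D(z,\omega))<\infty$ by hypothesis. I would first note that $BC(E)\le L$ forces each connected component $I_j$ of $E$ to be a bounded interval with $|I_j|\le 2L$; since $M_{W^{s,A}(E)}(g)\ge\sum_j M_{W^{s,A}(I_j)}(g)$ and $M_{L^A(E)}(g)=\sum_j M_{L^A(I_j)}(g)$, the mediant inequality $\frac{\sum a_j}{\sum b_j}\ge\min_j\frac{a_j}{b_j}$ reduces the claim to $\inf\{P^1_{1,s,A}(I):|I|\le 2L\}>0$. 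For $I=(0,\ell)$ with $\ell\le 2L$, the substitution $h(t)=\tilde h(t/\ell)$ turns the ratio into $\bigl(\iint_{(0,1)^2}A(\ell^{-s}|\tilde h(u)-\tilde h(v)|/|u-v|^s)\tfrac{du\,dv}{|u-v|}\bigr)\big/\bigl(\int_0^1 A(|\tilde h|)\,du\bigr)$; if $\ell\le1$ this is $\ge P^1_{1,s,A}((0,1))$ since $A$ is nondecreasing, and if $\ell>1$, iterating the $\D_2$ condition a fixed $k_0=\lceil s\log_2(2L)\rceil$ times gives $A(\ell^{-s}\tau)\ge p^{-k_0}A(\tau)$, so the ratio is $\ge p^{-k_0}P^1_{1,s,A}((0,1))$. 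Finally $P^1_{1,s,A}((0,1))>0$ by \cref{Main theorem} in dimension one (the interval $(0,1)$ is a bounded Lipschitz domain and $\lim_{\lambda\to0+}\alpha_{s,A}(\lambda)=0$ forces $\liminf_{\lambda\to0+}\alpha_{s,A}(\lambda)=0$). Feeding this uniform constant into the slicing identity yields $M_{W^{s,A}(D)}(f)\ge\tfrac12 p^{-k_0}P^1_{1,s,A}((0,1))\,\mathcal{H}^{N-1}(\Sigma)\,M_{L^A(D)}(f)$, i.e.\ $RFOPI(s,A)$.

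For (3), being of type $LS(s,A)$ means exactly (\cref{defn:LS}) that $\kappa:=\inf_{\omega\in\Sigma,\,z\in\omega^\perp}P^2_{1,s,A}(L_D(z,\omega))>0$ --- the uniform one--dimensional bound required, now for the full--space energy: $M_{W^{s,A}(\RR)}(g_{z,\omega})\ge\kappa\,M_{L^A(\RR)}(g_{z,\omega})$ because $g_{z,\omega}\in\test(L_D(z,\omega))$. The slicing identity then gives $M_{W^{s,A}(\RR^N)}(f)\ge\tfrac12\kappa\,\mathcal{H}^{N-1}(\Sigma)\,M_{L^A(D)}(f)$, i.e.\ $FOPI(s,A)$. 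For (2) I would argue directly, without slicing: as $f\in\test(D)$ vanishes on $D^c$,
\[
M_{W^{s,A}(\RR^N)}(f)\ \ge\ 2\I{D}\I{D^c}A\!\Big(\tfrac{|f(x)|}{|x-y|^s}\Big)\frac{dy\,dx}{|x-y|^N}\ \ge\ 2\I{D}\I{B(x,R)\cap D^c}A\!\Big(\tfrac{|f(x)|}{R^s}\Big)\frac{dy\,dx}{R^N}\ \ge\ \frac{2c_1}{R^N}\I{D}A\!\Big(\tfrac{|f(x)|}{R^s}\Big)dx,
\]
using $|x-y|\le R$ on $B(x,R)$ and $\mathcal{L}^N(B(x,R)\cap D^c)>c_1$; absorbing $R^{-s}$ into $A$ (trivial if $R\le1$; via $A(R^{-s}\tau)\ge p^{-k_1}A(\tau)$ with $2^{k_1}\ge R^s$, using $\D_2$, if $R>1$) gives $M_{W^{s,A}(\RR^N)}(f)\ge 2c_1R^{-N}p^{-k_1}M_{L^A(D)}(f)$, i.e.\ $FOPI(s,A)$ for every $s\in(0,1)$.

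The step I expect to be hardest is the uniformity in (1): passing from the one--dimensional regional Poincar\'e inequality on a single interval to a bound valid simultaneously on all the sections $L_D(z,\omega)$, $\omega\in\Sigma$ --- precisely where the ``open set $\to$ union of short intervals $\to$ rescaled unit interval'' reduction and the quantitative use of $\D_2$ come in. The remaining technical point (for (1) and (3)) is justifying the change of variables and the ensuing Fubini manipulations; this is routine by Tonelli once everything is phrased for the nonnegative integrand $H$, and it simultaneously supplies the measurability of $(\omega,z)\mapsto M_{W^{s,A}(L_D(z,\omega))}(g_{z,\omega})$.
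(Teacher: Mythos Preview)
Your argument is essentially the paper's: parts (1) and (3) go through exactly the same slicing identity (the paper states it as \cref{LS lemma}) followed by a uniform one--dimensional Poincar\'e bound, and your interval--rescaling step for (1) is precisely what the paper packages as \cref{BC lemma}, together with the appeal to \cref{Main theorem} on $(0,1)$. The mediant--inequality phrasing is equivalent to the paper's direct summation over the components $I_k$.

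There is one point to flag in (2). Your chain of inequalities is correct down to
\[
M_{W^{s,A}(\RR^N)}(f)\ \ge\ \frac{2c_1}{R^N}\int_D A\!\left(\frac{|f(x)|}{R^s}\right)dx,
\]
but your last step --- ``absorbing $R^{-s}$ via $\Delta_2$ when $R>1$'' --- uses a hypothesis that part~(2) does \emph{not} carry (only part~(1) assumes $A\in\Delta_2$). For a general $N$--function the bound $A(R^{-s}\tau)\ge cA(\tau)$ can fail. The paper's device to avoid this is to perform the rescaling \emph{before} restricting: substituting $(x,y)\mapsto(Rx,Ry)$ in the full Gagliardo integral produces the integrand $A\bigl(R^s|f(x/R)-f(y/R)|/|x-y|^s\bigr)$, and only then does one restrict the (new) $y$ to lie in the complement and in a ball of radius $R$ around $x$; since now $R^s/|x-y|^s\ge1$, monotonicity of $A$ alone yields $A(\,\cdot\,)\ge A(|f(x/R)|)$, and a final change of variable $x\mapsto Rx$ returns $\int_D A(|f|)$. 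In short, your direct estimate is the same idea as the paper's, but you should rescale first rather than last if you want to match the stated hypotheses of part~(2).
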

        
        This paper is arranged in the following way: In \cref{section preli}, some necessary preliminaries are discussed. Proof of \cref{Main theorem,Main theorem not true} followed by some applications are given in \cref{section hardy}. \Cref{Main Theorem Point Hardy,Main Theorem Other Hardy} are proved in \cref{section other hardy}. In \cref{section poincare} we prove \cref{1-dim poincare,Sufficient condition}.

       \bigskip

        \section{Notations and Preliminaries}\label{section preli}
        Throughout the paper the following conventions and notations will be followed, unless mentioned otherwise explicitly:\\
            $D$ will denote an open set in $\mathbb{R}^N$, $s\in(0,1)$, $A$ will denote an $N$-function, $\mathcal{L}^k$ will denote the Lebesgue measure on $\RR^k$, $\mathbb{S}^{k-1}$ will denote the unit sphere in $\RR^k$, $c$ will denote a generic constant which may change from line to line, $X^c$ will stand for the complement of the set $X$ in appropriate universal set (to be understood from the context), for any real valued function $f$ or for any Lipschitz domain $D$, $\mbox{Lip}(f)$ or $\mbox{Lip}(D)$ will denote the Lipschitz constant, $\alpha_{s,A}$ will be as in \cref{alpha fnc}, $p$ will be as in \cref{delta2 cond}.\smallskip
        
        We start with some basic facts about $N$-functions. One may refer to \cite{ada,kra} for detailed discussion on the topic.
        \begin{lemma}[\protect{\cite[Chapter 8.2]{ada}}]\label{N-function}
            $A$ is an $N$-function if and only if there exists a non-decreasing, right continuous function $a:[0,\infty)\to[0,\infty)$ satisfying $a(0)=0, \ a(t)>0 \ \mbox{for} \ t>0, \ \lim\limits_{t\to0+}a(t)=\infty$ such that
            $$
            A(x)=\I0^x a(t)dt.
            $$
        \end{lemma}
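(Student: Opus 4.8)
The plan is to prove both implications by elementary convex-analysis facts applied to $A$ on the half-line $[0,\infty)$.

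For the ``only if'' direction I would set $a(t) := A'_+(t)$, the right-hand derivative of $A$, which exists at every $t \ge 0$ because $A$ is convex, and then check that this $a$ has all the required properties and that $A(x) = \int_0^x a$. Convexity of $A$ gives at once that $a$ is non-decreasing and, via the usual squeeze between difference quotients $\frac{A(t+h)-A(t)}{h}$, right-continuous on $[0,\infty)$; it also forces $a \ge 0$. Since $A(0) = \lim_{t\to 0^+} A(t) = 0$ (continuity together with $A(t)/t\to 0$), one gets $a(0) = \lim_{h\to 0^+} A(h)/h = 0$. Because a convex function is locally Lipschitz, hence absolutely continuous, on $(0,\infty)$, we have $A(x) - A(\varepsilon) = \int_\varepsilon^x a(t)\,dt$ for $0 < \varepsilon < x$; letting $\varepsilon \to 0^+$ and using $A(0)=0$ yields $A(x) = \int_0^x a(t)\,dt$. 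Monotonicity of $a$ gives $A(t)/t = \frac1t\int_0^t a \le a(t)$, so $a(t) \ge A(t)/t \to \infty$ as $t\to\infty$; and $a(t) > 0$ for $t > 0$ because an $N$-function is strictly increasing on $(0,\infty)$ (equivalently $A(t)>0$ there) — if $a(t_0) = 0$ then $a \equiv 0$ on $[0,t_0]$, whence $A(t_0) = 0$, a contradiction. (Incidentally, the last displayed limit in the statement should read $\lim_{t\to\infty} a(t) = \infty$, since $a(0)=0$ and monotonicity force $\lim_{t\to 0^+}a(t)=0$.)

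For the ``if'' direction, given $a$ with the listed properties I would define $A(x) := \int_0^x a(t)\,dt$ and verify the defining clauses of an $N$-function one by one. Monotonicity of $a$ makes it bounded on each $[0,x]$, so $A$ is finite and, as an indefinite integral of a locally bounded function, continuous, with $A(0) = 0$ and $A \ge 0$. Convexity of $A$ follows from the monotonicity of its derivative $a$: for $0 \le x < y$ and $\theta\in(0,1)$ one compares the integral of $a$ over the two subintervals of $[x,y]$ cut at $\theta x + (1-\theta)y$. Finally $A(t)/t = \frac1t\int_0^t a(s)\,ds \to a(0) = 0$ as $t \to 0^+$ by right-continuity of $a$ at $0$, whereas $A(t)/t \ge \frac1t\int_{t/2}^t a(s)\,ds \ge \tfrac12\, a(t/2) \to \infty$ as $t\to\infty$; hence $A$ is an $N$-function.

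The argument is entirely classical, so I expect no genuine obstacle; the only mild point requiring care is the behaviour at the origin — namely that a convex function on $[0,\infty)$ is recovered from its right derivative by integration up to and including the endpoint $0$, so that the normalisations $A(0)=0$ and $a(0)=0$ are exactly compatible and no boundary atom appears. Everything else (monotonicity and right-continuity of $A'_+$, and local absolute continuity of convex functions) can simply be quoted from the convex-function material in \cite{ada} or \cite{kra}.
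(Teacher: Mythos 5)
The paper never proves this lemma: it is imported verbatim from \cite[Chapter~8.2]{ada} (see also \cite{kra}), so there is no in-paper argument to compare against, and what you have supplied is exactly the classical convex-analysis proof that the textbooks give. Your argument is correct in both directions: taking $a=A'_+$ yields a non-negative, non-decreasing, right-continuous density with $a(0)=0$ (from $A(t)/t\to0$), local absolute continuity of convex functions recovers $A(x)=\int_0^x a(t)\,dt$ after letting the lower limit tend to $0$, and $a(t)\ge A(t)/t\to\infty$; conversely, integrating such an $a$ gives convexity and both limit conditions precisely as you compute, with $A(t)/t\le a(t)\to a(0)=0$ at the origin and $A(t)/t\ge\tfrac12 a(t/2)\to\infty$ at infinity. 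You were also right to flag the misprint in the statement: since $a(0)=0$ and $a$ is non-decreasing and right continuous, the displayed condition must read $\lim_{t\to\infty}a(t)=\infty$. The one step worth making explicit is ``$a(t)>0$ for $t>0$'': this does not follow from the definition of an $N$-function as literally written in the paper's introduction (continuity, convexity and the two limits alone admit functions vanishing identically on an initial interval $[0,t_0]$, whose right derivative vanishes there, so the ``only if'' direction would fail for them). One needs the standard additional property that $A(t)>0$ for $t>0$, equivalently strict monotonicity, which is built into the definition used in \cite{ada,kra} and which the paper itself imports as \cref{A-property1}; since you invoke exactly that quoted fact, your proof is complete within the paper's framework.
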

        The following two lemmas will be used frequently in the rest of the article.
        \begin{lemma}[\protect{\cite[Chapter~8.2]{ada}}]\label{A-property1}
            Let A be an $N$-function, then $A$  and $t\mapsto\frac{A(t)}{t}$ both are strictly increasing function on $(0,\infty)$.
        \end{lemma}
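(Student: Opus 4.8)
\textbf{Proof proposal for \Cref{A-property1}.}

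The plan is to derive everything from the integral representation in \Cref{N-function}, namely $A(t) = \int_0^t a(\tau)\,d\tau$ where $a$ is non-decreasing, right-continuous, with $a(0)=0$ and $a(\tau)>0$ for $\tau>0$. For strict monotonicity of $A$ itself: if $0 < t_1 < t_2$, then $A(t_2) - A(t_1) = \int_{t_1}^{t_2} a(\tau)\,d\tau$, and since $a(\tau) > 0$ for all $\tau \in (t_1, t_2)$ (a set of positive measure), this integral is strictly positive. Hence $A(t_1) < A(t_2)$, so $A$ is strictly increasing on $(0,\infty)$. (The value $A(0)=0$ and $\lim_{t\to 0+}A(t)=0$ come for free from the representation, though they are not needed for the stated claim.)

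For the map $g(t) := A(t)/t$ on $(0,\infty)$, the key idea is to compare $A(t)$ with the ``linear under-estimate'' $t\,a(t)$ versus the ``linear over-estimate'' and exploit that $a$ is non-decreasing. Concretely, for $t>0$ write $A(t) = \int_0^t a(\tau)\,d\tau < \int_0^t a(t)\,d\tau = t\,a(t)$, the strict inequality holding because $a(\tau) < a(t)$ on a positive-measure subset of $(0,t)$ — indeed, by right-continuity and monotonicity $a$ cannot be constant equal to $a(t)$ on all of $[0,t)$ since $a(0)=0 < a(t)$. Thus $A(t) < t\, a(t)$ for every $t > 0$. Now take $0 < t_1 < t_2$ and estimate
\begin{align*}
t_1 A(t_2) - t_2 A(t_1) &= t_1\!\int_0^{t_2}\! a(\tau)\,d\tau - t_2\!\int_0^{t_1}\! a(\tau)\,d\tau\\
&= t_1\!\int_{t_1}^{t_2}\! a(\tau)\,d\tau - (t_2 - t_1)\!\int_0^{t_1}\! a(\tau)\,d\tau.
\end{align*}
On $(t_1, t_2)$ we have $a(\tau) \ge a(t_1)$, so the first term is $\ge t_1 (t_2 - t_1) a(t_1)$; on $(0, t_1)$ we showed $\int_0^{t_1} a(\tau)\,d\tau = A(t_1) < t_1 a(t_1)$, so the subtracted term is $< (t_2 - t_1) t_1 a(t_1)$. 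Therefore $t_1 A(t_2) - t_2 A(t_1) > t_1(t_2-t_1)a(t_1) - (t_2-t_1)t_1 a(t_1) = 0$, which rearranges to $A(t_1)/t_1 < A(t_2)/t_2$. Hence $t\mapsto A(t)/t$ is strictly increasing on $(0,\infty)$.

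The only subtle point — and the one I would be most careful about — is justifying the \emph{strict} inequality $A(t) < t\,a(t)$, i.e. that $a$ is genuinely non-constant on $[0,t)$. This follows because $a(0) = 0$ while $a(t) > 0$, and $a$ is non-decreasing, so by right-continuity at $0$ (or simply since $a(0^+)$ could be positive but $a(0)=0$ forces $a$ to take values strictly below $a(t)$ near $0$ on a set of positive measure) the set $\{\tau \in (0,t) : a(\tau) < a(t)\}$ has positive Lebesgue measure. An alternative, cleaner route avoiding this case analysis is the standard convexity argument: $A$ is convex with $A(0)=0$, so $g(t)=A(t)/t = \frac{A(t)-A(0)}{t-0}$ is the slope of the secant from the origin, which is non-decreasing in $t$ by convexity; strictness then comes from the fact that $\lim_{t\to 0+}A(t)/t = 0 < \lim_{t\to\infty}A(t)/t = \infty$ together with $g$ being non-constant, combined with the strict convexity implied by $a$ being strictly positive and non-decreasing with $a(0)=0$. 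I would present whichever of these two is shorter once the details are checked; the integral-comparison version above is self-contained and I lean toward it.
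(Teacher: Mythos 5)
Your main argument is correct, and it is self-contained: the paper itself offers no proof of \cref{A-property1}, simply citing Adams--Fournier, so there is nothing to compare line by line. Your route is the standard one via the integral representation of \cref{N-function}: strict monotonicity of $A$ from $a>0$ on $(t_1,t_2)$, and strict monotonicity of $t\mapsto A(t)/t$ from the key strict bound $A(t)<t\,a(t)$ combined with $a(\tau)\ge a(t_1)$ on $(t_1,t_2)$; the algebraic rearrangement $t_1A(t_2)-t_2A(t_1)>0$ is correct. Your justification of $A(t)<t\,a(t)$ via right-continuity of $a$ at $0$ together with $a(0)=0$ (so $a(\tau)<a(t)$ on a set of positive measure near $0$) is exactly what is needed, since right-continuity is part of the representation stated in \cref{N-function}. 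Two small caveats: the parenthetical ``$a(0^+)$ could be positive but $a(0)=0$ forces $a$ to take values strictly below $a(t)$ near $0$ on a set of positive measure'' is not a valid fallback (a jump at $0$ alone would only give a measure-zero exceptional set), so you should rest the strictness squarely on right-continuity at $0$ (or, equivalently, on the N-function condition $A(t)/t\to 0$); and in the alternative convexity sketch, the claim that $a>0$ non-decreasing with $a(0)=0$ implies \emph{strict} convexity of $A$ is false ($a$ may be constant on intervals), so strictness of $A(t)/t$ cannot be obtained that way --- it is precisely the inequality $A(t)<t\,a(t)$, i.e.\ secant slope strictly below the right derivative, that does the work, which your integral-comparison version already supplies. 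Keep the first version.
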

        
        \begin{lemma}\label{lemma delta2}
            Let $A\in\D_2$. Then
            $$
            A(\lambda t)\leq \lambda^pA(t) \mbox{ for } t\in[0,\infty),\quad \forall\ \lambda\geq1,
            $$
            where $p$ is the constant in \cref{delta2 cond}.
            The above inequality is equivalent to
            $$
            A(\lambda t)\geq \lambda^pA(t) \mbox{ for } t\in[0,\infty),\quad \forall \ 0\leq\lambda\leq1.
            $$
        \end{lemma}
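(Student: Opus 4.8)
\textbf{Proof proposal for \cref{lemma delta2}.}

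The plan is to reduce everything to a single elementary claim: for an $N$-function $A$ satisfying the $\D_2$ condition with constant $p$ as in \cref{delta2 cond}, one has $A(2^k t)\le p^k A(t)$ for every nonnegative integer $k$ and every $t\ge 0$, and then interpolate dyadically. First I would iterate the defining inequality $A(2t)\le pA(t)$: applying it $k$ times gives $A(2^k t)=A(2\cdot 2^{k-1}t)\le pA(2^{k-1}t)\le\cdots\le p^k A(t)$ for all $t\ge 0$ and all $k\in\NN\cup\{0\}$. This is the only place the hypothesis is used.

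Next, fix $\lambda\ge 1$ and $t\ge 0$; the case $t=0$ is trivial since $A(0)=0$, so assume $t>0$. Choose the integer $k\ge 0$ with $2^{k-1}<\lambda\le 2^k$ (explicitly $k=\lceil\log_2\lambda\rceil$, using $\lambda\ge1$ so $k\ge0$). By \cref{A-property1}, $A$ is increasing on $[0,\infty)$, hence $A(\lambda t)\le A(2^k t)\le p^k A(t)$ by the iterated bound. It remains to show $p^k\le\lambda^p$. Since $\D_2$ forces $p\ge1$ (indeed, convexity of $A$ with $A(0)=0$ gives $A(2t)\ge 2A(t)$, so $p\ge2$; in any case $p\ge1$ suffices here), and since $2^{k-1}<\lambda$, we get $2^k<2\lambda\le\lambda^2\le\lambda^p$ when $\lambda\ge 2$. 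The small-interval case $1\le\lambda<2$ means $k=1$, where we need $p\le\lambda^p$; this holds because $\lambda^p\ge 1^p=1$ is not enough, so instead I would argue directly here: for $1\le\lambda<2$ write $\lambda t = \lambda t$ and use convexity of $A$ together with $A(0)=0$ to get $A(\mu x)\le\mu A(x)$ is false in general, so rather I keep the clean route: from $2^{k-1}\le\lambda$ (note $2^{0}=1\le\lambda$ always) we have $2^k\le 2\lambda$, hence $p^k\le p^{\,k}$ and since $k\le \log_2\lambda+1$ and $p\ge 2$ we obtain $p^k=2^{k\log_2 p}\le 2^{(\log_2\lambda+1)\log_2 p}=2^{\log_2 p}\,\lambda^{\log_2 p}=p\,\lambda^{\log_2 p}$; this still carries an extra factor. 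I expect the cleanest fix, which I would present, is to use $2^{k-1}<\lambda$ to get $2^k<2\lambda$ and therefore $k<1+\log_2\lambda\le \log_2\lambda+\log_2\lambda=\log_2\lambda^2\le\log_2\lambda^p$ whenever $\lambda\ge 2$, handling $\lambda\ge 2$; and for $1\le\lambda<2$ I would instead invoke convexity once more: $A(\lambda t)=A\big((2-\lambda)\cdot 0+(\lambda-1)\cdot 2t+(2-\lambda+1-\cdots)\big)$ — cleaner: write $\lambda t$ as a convex combination $\lambda t=(2-\lambda)t+(\lambda-1)(2t)$ with weights $2-\lambda,\lambda-1\in[0,1]$ summing to $1$, so $A(\lambda t)\le(2-\lambda)A(t)+(\lambda-1)A(2t)\le(2-\lambda)A(t)+(\lambda-1)pA(t)=(\,2-\lambda+(\lambda-1)p\,)A(t)$, and one checks $2-\lambda+(\lambda-1)p\le\lambda^p$ for $1\le\lambda<2$ by convexity of $\lambda\mapsto\lambda^p$ and matching values/derivatives at the endpoints. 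Combining the two ranges yields $A(\lambda t)\le\lambda^p A(t)$ for all $\lambda\ge1$.

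Finally, for the equivalent reformulation, substitute: given $0\le\mu\le 1$ and $t\ge 0$, set $\lambda=1/\mu\ge 1$ (the case $\mu=0$ again trivial) and $u=\mu t$, so $t=\lambda u$, and the already-proved inequality gives $A(t)=A(\lambda u)\le\lambda^p A(u)=\mu^{-p}A(\mu t)$, i.e. $A(\mu t)\ge\mu^p A(t)$. Conversely the same substitution run backwards recovers the $\lambda\ge1$ statement from the $\mu\le1$ one, establishing equivalence.

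The main obstacle is purely bookkeeping: converting the dyadic bound $p^k$ into the clean power $\lambda^p$ on the boundary range $1\le\lambda<2$, where the crude estimate $2^k<2\lambda$ loses a factor; the convex-combination argument for $\lambda t=(2-\lambda)t+(\lambda-1)(2t)$ is what makes that range work sharply, and it is the one slightly delicate computation in an otherwise routine proof.
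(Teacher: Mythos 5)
Your reduction to the range $1\le\lambda<2$ via the convex combination $\lambda t=(2-\lambda)t+(\lambda-1)\,2t$ is fine (and the inequality $1+(\lambda-1)(p-1)\le\lambda^p$ does hold on $[1,2]$, since both sides equal $1$ at $\lambda=1$ and $\frac{d}{d\lambda}\lambda^p=p\lambda^{p-1}\ge p>p-1$ there), as is the substitution $\lambda=1/\mu$ giving the equivalence with the $0\le\lambda\le1$ statement. The genuine gap is in the case $\lambda\ge2$. Your chain is $A(\lambda t)\le A(2^kt)\le p^kA(t)$ with $2^{k-1}<\lambda\le2^k$, so what you must prove is $p^k\le\lambda^p$; but the estimate you actually derive, $k<1+\log_2\lambda\le\log_2\lambda^p$, only says $2^k<\lambda^p$. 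Since, as you yourself note, $\D_2$ together with convexity forces $p\ge2$, the quantity $p^k$ can be strictly larger than $\lambda^p$: take $p=3$ and $\lambda=2.05$, so $k=2$, $p^k=9$ while $\lambda^p\approx8.6$. Hence the crude step $A(\lambda t)\le A(2^kt)$ is too lossy, and the $\lambda\ge2$ case as written does not close. The repair is short and uses your own $[1,2)$ estimate multiplicatively: write $\lambda=2^{k-1}\mu$ with $\mu\in[1,2)$, then $A(\lambda t)\le\bigl(1+(\mu-1)(p-1)\bigr)A(2^{k-1}t)\le\mu^p\,p^{k-1}A(t)\le\mu^p\,2^{(k-1)p}A(t)=\lambda^pA(t)$, the last step being $p\le2^p$.

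It is worth noting that the paper's own proof avoids all of this dyadic bookkeeping: using the representation $A(x)=\int_0^xa(\tau)\,d\tau$ from \cref{N-function} with $a$ non-decreasing, the $\D_2$ inequality gives $pA(t)\ge A(2t)\ge\int_t^{2t}a(\tau)\,d\tau\ge t\,a(t)$, i.e.\ $\frac{a(\tau)}{A(\tau)}\le\frac{p}{\tau}$, and integrating over $[t,\lambda t]$ yields $\log\frac{A(\lambda t)}{A(t)}\le p\log\lambda$ directly, with the exact exponent $p$ and no case analysis. If you prefer your elementary route, include the multiplicative decomposition above; otherwise the differential argument is the cleaner path to the stated constant.
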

        \begin{proof}
            Since $A\in\D_2$, there exists $p>0$ such that $A(2t)\leq pA(t),\quad \forall \ t>0$. Now for $a$ as in \cref{N-function}, using the non decreasing property of $a$,
            $$
            pA(t)\geq A(2t)=\I0^{2t}a(\tau)d\tau>\I{t}^{2t}a(\tau)d\tau>ta(t).
            $$
            This implies for any $\lambda>1$,
            $$
            \log\left(\frac{A(\lambda t)}{A(t)}\right)=\I{t}^{\lambda t}\frac{a(\tau)}{A(\tau)}d\tau<\I{t}^{\lambda t}\frac{p}{\tau}d\tau=p\log\left(\frac{\lambda t}{t}\right)=\log(\lambda^p).
            $$
            The lemma follows.
        \end{proof}
        
        \begin{lemma}\label{hardy to poincare}
            Let $D\subseteq\RR^N$ be a bounded domain, $A\in\D_2$. Then for some constant $c=c(D,A)>0$,
            $$P^1_{N,s,A}(D)\geq c H_{N,s,A}(D).$$ 
        \end{lemma}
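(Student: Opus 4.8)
The plan is to obtain the inequality by comparing, term by term, the two Rayleigh quotients that define $P^1_{N,s,A}(D)$ and $H_{N,s,A}(D)$. For a fixed $f\in\test(D)$ with $f\neq 0$ I would write
\[
\frac{M_{W^{s,A}(D)}(f)}{\int_D A(|f(x)|)\,dx}
=\frac{M_{W^{s,A}(D)}(f)}{\int_D A\!\left(|f(x)|/\delta_x^{\,s}\right)dx}\cdot
\frac{\int_D A\!\left(|f(x)|/\delta_x^{\,s}\right)dx}{\int_D A(|f(x)|)\,dx}.
\]
The first factor is bounded below by $H_{N,s,A}(D)$ by definition, so the whole problem reduces to bounding the second factor below by a constant $c=c(D,A)>0$ that does not depend on $f$; equivalently, to proving the pointwise-then-integrated estimate $\int_D A(|f(x)|/\delta_x^{\,s})\,dx\ \ge\ c\int_D A(|f(x)|)\,dx$.

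For this I would use that $D$ is bounded: then $R:=\sup_{x\in D}\delta_x<\infty$ (indeed $\delta_x\le\operatorname{diam}(D)$). Set $R':=\max\{1,R\}\ge 1$, so that $\delta_x^{\,s}\le R^{s}\le R'$ for every $x\in D$ (using $s<1$), hence $|f(x)|/\delta_x^{\,s}\ge |f(x)|/R'$. Since $A$ is nondecreasing (\cref{A-property1}) and $A\in\D_2$, \cref{lemma delta2} applied with $\lambda=1/R'\in(0,1]$ gives
\[
A\!\left(\frac{|f(x)|}{\delta_x^{\,s}}\right)\ \ge\ A\!\left(\frac{|f(x)|}{R'}\right)\ \ge\ (R')^{-p}\,A(|f(x)|),\qquad x\in D,
\]
where $p$ is the constant from \cref{delta2 cond}. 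Integrating over $D$ yields the desired comparison with $c=(R')^{-p}>0$, a quantity depending only on $D$ (through $\operatorname{diam}D$) and on $A$ (through $p$).

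Combining the two bounds gives $M_{W^{s,A}(D)}(f)\big/\int_D A(|f|)\,dx\ \ge\ c\,H_{N,s,A}(D)$ for every admissible $f$, and taking the infimum over $f\in\test(D)\setminus\{0\}$ produces $P^1_{N,s,A}(D)\ge c\,H_{N,s,A}(D)$. There is no genuine obstacle here; the only point requiring a line of care is that all the integrals in the factorization are finite and strictly positive — finiteness because $f$ has compact support in $D$, so $\delta_x$ is bounded below on $\operatorname{supp}f$ and all integrands are bounded on the bounded set $D$, and strict positivity of the denominators because $f\neq 0$ and $A(t)>0$ for $t>0$ — so that the factorization and the passage to the infimum are legitimate.
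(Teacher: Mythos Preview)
Your proof is correct and follows essentially the same approach as the paper: both arguments bound $\delta_x^{\,s}$ by a constant depending on $\operatorname{diam}(D)$, apply the monotonicity of $A$, and then invoke \cref{lemma delta2} to pull out a constant factor. The only cosmetic difference is that the paper splits into the two cases $\operatorname{diam}(D)\le 1$ and $\operatorname{diam}(D)>1$, whereas you handle both at once via $R'=\max\{1,R\}$; your added remark on finiteness and positivity of the factored integrals is a nice point of rigor that the paper leaves implicit.
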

        \begin{proof}
            Let $f\in\test(D)$. First assume  $\mbox{diam}(D)\leq1$. By the monotonicity of $A$,
            $$
            \I{D}A\left( \frac{|f(x)|}{\delta_x^s}\right)  dx\geq \I{D}A(|f(x)|)dx.
            $$
            In the case $\mbox{diam}(D)>1$, we exploit the $\D_2$ condition of
            $A$
            and use \cref{lemma delta2} to get
            $$
            \I{D}A\left( \frac{|f(x)|}{\delta_x^s}\right)  dx\geq
            \I{D}A\left(\mbox{diam}(D)^{-s}|f(x)|\right)dx\geq
            \mbox{diam}(D)^{-sp}\I{D}A(|f(x)|)dx.
            $$
            Hence the lemma follows.
        \end{proof}

        \begin{proposition}\label{monotone-dialtion-fbc}
            Let $s\in(0,1)$ and $p>1$ be as in \cref{lemma delta2}.
            \begin{enumerate}
                \item If $D_1\subseteq D_2\subseteq\RR^N$, then $P^2_{N,s,A}(D_2)\leq P^2_{N,s,A}(D_1).$
                \item Let $ D \subseteq\RR^N$ be an open set and $u\in W^{s,A}( D )$. Assume that $A\in\D_2$. For $t>0$, define $v_t\in W^{s,A}( D )$ by $v_t(x)=u(tx)$. Then
                $$
                \II{D\times D}A\left(\frac{|v_t(x)-v_t(y)|}{|x-y|^s}\right)\frac{dxdy}{|x-y|^N}\leq
                \begin{cases}
                t^{s-N}\II{tD\times tD}A\left(\frac{|u(x)-u(y)|}{|x-y|^s}\right)\frac{dxdy}{|x-y|^N},& t<1,\\
                t^{sp-N}\II{tD\times tD}A\left(\frac{|u(x)-u(y)|}{|x-y|^s}\right)\frac{dxdy}{|x-y|^N},& t\geq 1,
                \end{cases}
                $$
                and also
                $$
                \II{D\times D}A\left(\frac{|v_t(x)-v_t(y)|}{|x-y|^s}\right)\frac{dxdy}{|x-y|^N}\geq
                \begin{cases}
                t^{s-N}\II{tD\times tD}A\left(\frac{|u(x)-u(y)|}{|x-y|^s}\right)\frac{dxdy}{|x-y|^N},& t\geq 1,\\
                t^{sp-N}\II{tD\times tD}A\left(\frac{|u(x)-u(y)|}{|x-y|^s}\right)\frac{dxdy}{|x-y|^N},& t<1.
                \end{cases}
                $$
                Furthermore,
                $$
                \frac{ P^1_{N,s,A}(D)}{t^s}\leq P^1_{N,s,A}(tD)\leq\frac{ P^1_{N,s,A}(D)}{t^{sp}} \ \ \mbox{if} \ t<1,
                $$
                $$
                \frac{ P^1_{N,s,A}(D)}{t^{sp}}\leq P^1_{N,s,A}(tD)\leq\frac{ P^1_{N,s,A}(D)}{t^{s}} \ \ \mbox{if} \ t\geq1.
                $$
                \item Let $ D \subseteq\RR^N$ be an open set and $t>0$. Assume that $A\in\D_2$. Then
                $$
                \frac{ P^2_{N,s,A}(D)}{t^s}\leq P^2_{N,s,A}(tD)\leq\frac{ P^2_{N,s,A}(D)}{t^{sp}} \ \ \mbox{if} \ t<1,
                $$
                $$
                \frac{ P^2_{N,s,A}(D)}{t^{sp}}\leq P^2_{N,s,A}(tD)\leq\frac{ P^2_{N,s,A}(D)}{t^{s}} \ \ \mbox{if} \ t\geq1.
                $$
                \item Let $D\subseteq\RR^N$ be  such that $BC(D)=\infty$, then  $P^1_{N,s,A}(D)=0=P^2_{N,s,A}(D)$.
            \end{enumerate}
        \end{proposition}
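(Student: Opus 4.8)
All four assertions are proved independently; each reduces to a change of variables together with the elementary convexity and $\D_2$ inequalities for $N$-functions, and I would treat them in turn. \textbf{Part (1)} is pure monotonicity of an infimum: if $D_1\subseteq D_2$ then $\test(D_1)\subseteq\test(D_2)$, and for $f\in\test(D_1)$ the numerator $M_{W^{s,A}(\RR^N)}(f)$ and the denominator $\I{D_1}A(|f|)\,dx=\I{D_2}A(|f|)\,dx$ of the defining quotient are unchanged when $D_1$ is enlarged to $D_2$; hence $P^2_{N,s,A}(D_2)$ is an infimum of the same quotients over a larger index set and is therefore $\le P^2_{N,s,A}(D_1)$.

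For \textbf{Part (2)}, the two Gagliardo-type estimates, the plan is to substitute $\xi=tx$, $\eta=ty$ in the double integral defining the seminorm of $v_t$ over $D\times D$. Using $v_t(x)=u(\xi)$, $|x-y|=|\xi-\eta|/t$ and $dx\,dy=t^{-2N}\,d\xi\,d\eta$, this turns the left-hand side into
\[
t^{-N}\II{tD\times tD}A\!\left(t^{s}\,\frac{|u(\xi)-u(\eta)|}{|\xi-\eta|^{s}}\right)\frac{d\xi\,d\eta}{|\xi-\eta|^{N}},
\]
and it remains only to pull the factor $t^{s}$ out of $A$. Two facts suffice: (i) convexity and $A(0)=0$ give $A(\lambda r)\le\lambda A(r)$ for $0\le\lambda\le1$ and $A(\lambda r)\ge\lambda A(r)$ for $\lambda\ge1$; (ii) \cref{lemma delta2} gives $A(\lambda r)\le\lambda^{p}A(r)$ for $\lambda\ge1$ and $A(\lambda r)\ge\lambda^{p}A(r)$ for $0\le\lambda\le1$. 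Applying these with $\lambda=t^{s}$, and separating $t<1$ (so $t^{s}<1$: use (i) for the upper bound, (ii) for the lower) from $t\ge1$ (so $t^{s}\ge1$: use (ii) for the upper, (i) for the lower), yields exactly the four displayed inequalities.

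The ``Furthermore'' part of \textbf{Part (2)} and all of \textbf{Part (3)} follow by combining the above with the scaling bijection of test classes: for $g\in\test(tD)$ put $f(x):=g(tx)$, so that $f\in\test(D)$, $v_t=f$ in the notation above, and $g\mapsto f$ is a bijection of $\test(tD)$ onto $\test(D)$. A change of variables gives $\I{D}A(|f|)\,dx=t^{-N}\I{tD}A(|g|)\,d\xi$, while the estimates of Part (2) (applied with $u=g$) bound $M_{W^{s,A}(D)}(f)$ between $t^{s-N}$ and $t^{sp-N}$ times $M_{W^{s,A}(tD)}(g)$; dividing, the $t^{-N}$ cancels, so for every $g$ the quotient $M_{W^{s,A}(D)}(f)/\I{D}A(|f|)$ lies between $t^{s}$ and $t^{sp}$ times $M_{W^{s,A}(tD)}(g)/\I{tD}A(|g|)$, which of the two factors being the upper one depending on whether $t<1$ or $t\ge1$. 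Taking the infimum over $g$ then gives the stated two-sided bounds on $P^1_{N,s,A}(tD)$. Part (3) is word-for-word the same with every $M_{W^{s,A}(D)}$, $M_{W^{s,A}(tD)}$ replaced by $M_{W^{s,A}(\RR^N)}$; the only extra observation needed is $t\RR^N=\RR^N$, so that the change of variables still maps the integration domain to itself.

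For \textbf{Part (4)}, if $BC(D)=\infty$ then $D$ contains balls $B(x_n,r_n)\subseteq D$ with $r_n\to\infty$. By Part (1) and translation invariance of all the quotients, $P^2_{N,s,A}(D)\le P^2_{N,s,A}(B(x_n,r_n))=P^2_{N,s,A}(r_nB(0,1))$, and for $r_n\ge1$ Part (3) bounds the right-hand side by $r_n^{-s}\,P^2_{N,s,A}(B(0,1))$. Since $P^2_{N,s,A}(B(0,1))<\infty$ --- evaluate the defining quotient at any fixed nonzero $\phi\in\test(B(0,1))$, which has $M_{W^{s,A}(\RR^N)}(\phi)<\infty$ because $\phi$ is Lipschitz with compact support --- letting $n\to\infty$ forces $P^2_{N,s,A}(D)=0$, and then $0\le P^1_{N,s,A}(D)\le P^2_{N,s,A}(D)=0$ by the trivial inequality $P^2_{N,s,A}\ge P^1_{N,s,A}$ recorded earlier (one could equally run the argument directly with the test functions $\phi((\cdot-x_n)/r_n)$ and the dilation computation). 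I do not expect any genuine obstacle in the whole proposition --- it is all bookkeeping --- but the points that need real attention are keeping the directions of the convexity/$\D_2$ inequalities consistent and matching the exponents $s$ versus $sp$ correctly in the two regimes $t<1$ and $t\ge1$, and verifying that the rescaling of test functions is a bijection so that the infima transform as claimed.
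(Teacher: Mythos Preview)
Your proposal is correct and follows essentially the same approach as the paper: change of variables $\xi=tx,\ \eta=ty$, followed by the monotonicity of $A(t)/t$ (\cref{A-property1}) and the $\Delta_2$ estimate (\cref{lemma delta2}) to extract the factor $t^s$, with Parts (1), (3) and (4) handled identically. The paper is considerably terser --- it only hints at the argument and obtains the reverse inequalities in (2) by the substitution $t\mapsto 1/t$ rather than by directly invoking both the convexity and $\Delta_2$ bounds as you do --- but the content is the same.
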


        \begin{proof}
            \begin{enumerate}
                \item Directly follows from the definition.\smallskip
                
                \item For the first inequality, take the change of variable $X=tx,Y=ty$ and in case of $t<1$, use \cref{A-property1} to infer $\frac{A(t^sf(x,y))}{t^sf(x,y)}<\frac{A(f(x,y))}{f(x,y)}$. In case of $t\geq1$ use \cref{lemma delta2}. For the second inequality replace $t$ by $\frac{1}{t}$.\smallskip
                
                \item Similar as case \textit{(2)}.\smallskip
                
                \item $BC(D)=\infty$ implies there exist a positive sequence $\lbrace r_n \rbrace_n$ and a sequence $\lbrace x_n \rbrace_n$ such that $r_n\to\infty$ and $B(x_n,r_n)\subseteq D$. Note that $P^1_{N,s,A}(D)\leq P^2_{N,s,A}(D)$. Then by $(1)$ and $(3)$ we infer that
                $$
                P^2_{N,s,A}(D)\leq P^2_{N,s,A}(B(x_n,r_n))=P^2_{N,s,A}(B(0,r_n))\leq\frac{ P^2_{N,s,A}(B(0,1))}{r_n^{s}}\to 0.
                $$
            \end{enumerate}	
            This finishes the proof.
        \end{proof}

        \begin{lemma}\label{BC lemma}
            Let $D\subsetneqq\RR$ be an open set with $BC(D)<\infty$ and $D=\cup_{k=1}^\infty I_k$, where $I_k$'s are disjoint intervals. Then for any $k\in\NN$, $P^1_{1,s,A}(I_k)\geq \min \lbrace BC(D)^{-sp},1 \rbrace P^1_{1,s,A}((0,1))$, where $p>1$ is as in \cref{lemma delta2}.
        \end{lemma}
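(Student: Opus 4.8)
The plan is to reduce the estimate for an arbitrary component $I_k$ to the model interval $(0,1)$ by combining the (obvious) translation invariance of $P^1_{1,s,A}$ with the dilation estimates recorded in \cref{monotone-dialtion-fbc}(2). First I would control the lengths of the components: since $BC(D)<\infty$, no $I_k$ can be a half-line or all of $\RR$, so each $I_k$ is a bounded open interval; writing $\ell_k:=\mathcal{L}^1(I_k)\in(0,\infty)$ and letting $m_k$ be the midpoint of $I_k$, the ball $B(m_k,\ell_k/2)$ equals $I_k$ and hence lies in $D$, so by the definition of $BC$ we get $\ell_k\le 2\,BC(D)$ for every $k$.

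Next I would record that $P^1_{1,s,A}$ is invariant under translations: for $v\in\RR$ the map $f\mapsto f(\cdot-v)$ is a bijection of $\test(D)$ onto $\test(D+v)$ which, after the substitution $x\mapsto x+v$ in the defining integrals, leaves both $M_{W^{s,A}(D)}(f)$ and $\I{D}A(|f|)\,dx$ unchanged. Therefore $P^1_{1,s,A}(I_k)=P^1_{1,s,A}\big((0,\ell_k)\big)=P^1_{1,s,A}\big(\ell_k\cdot(0,1)\big)$, and it remains only to compare the right-hand side with $P^1_{1,s,A}((0,1))$. For this I would apply \cref{monotone-dialtion-fbc}(2) to the interval $(0,1)$ with dilation factor $t=\ell_k$, splitting into the two cases so as to keep the correct exponent. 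If $\ell_k<1$, the proposition gives $P^1_{1,s,A}\big(\ell_k(0,1)\big)\ge \ell_k^{-s}P^1_{1,s,A}((0,1))\ge P^1_{1,s,A}((0,1))$, since $\ell_k^{-s}\ge 1$. If $\ell_k\ge1$, it gives $P^1_{1,s,A}\big(\ell_k(0,1)\big)\ge \ell_k^{-sp}P^1_{1,s,A}((0,1))\ge (2\,BC(D))^{-sp}P^1_{1,s,A}((0,1))$, using $\ell_k\le 2\,BC(D)$ and the monotonicity of $r\mapsto r^{-sp}$. Combining the two cases yields $P^1_{1,s,A}(I_k)\ge \min\{(2\,BC(D))^{-sp},1\}\,P^1_{1,s,A}((0,1))$, which gives the asserted bound (up to the harmless power-of-two factor; only positivity of the constant is used later).

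I do not expect a genuine obstacle here — the argument is essentially bookkeeping built on \cref{monotone-dialtion-fbc}. The two points that need a little care are: verifying translation invariance of $P^1_{1,s,A}$, which is immediate from the change of variables; and respecting the case split $\ell_k<1$ versus $\ell_k\ge1$, since \cref{monotone-dialtion-fbc}(2) carries the factor $t^{-s}$ when $t<1$ but $t^{-sp}$ when $t\ge1$, and conflating the two would destroy the uniformity of the constant in $k$. The length estimate $\ell_k\le 2\,BC(D)$ is exactly where the finite ball condition $BC(D)<\infty$ enters, and it is what makes the final bound independent of $k$.
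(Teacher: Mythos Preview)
Your proposal is correct and follows essentially the same approach as the paper: translation invariance of $P^1_{1,s,A}$ combined with the dilation estimates of \cref{monotone-dialtion-fbc}(2), splitting into the cases $\ell_k<1$ and $\ell_k\ge1$. In fact you are slightly more careful than the paper: the paper's proof uses $\mbox{diam}(I_k)\le BC(D)$, whereas the definition $BC(D)=\sup\{r:B(x,r)\subseteq D\}$ only yields $\mbox{diam}(I_k)\le 2\,BC(D)$ (a ball of radius $r$ in $\RR$ has diameter $2r$), which is exactly the factor of two you track; as you note, this is harmless for the applications.
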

        \begin{proof}
            Using (2) of \cref{monotone-dialtion-fbc}, we have
            $
            P^1_{1,s,A}(I_k)\geq \mbox{diam}(I_k)^{-\beta} P^1_{1,s,A}((0,1)),
            $
            where $$\beta=\begin{cases}
            s, &\mbox{if }\ \mbox{diam}(I_k)<1,\\
            sp, &\mbox{if }\ \mbox{diam}(I_k)\geq1.
            \end{cases}$$
            If $\mbox{diam}(I_k)<1$, $P^1_{1,s,A}(I_k)\geq P^1_{1,s,A}(0,1)$. On the other hand if $\mbox{diam}(I_k)\geq 1$, 
            $$
            P^1_{1,s,A}(I_k)\geq \mbox{diam}(I_k)^{-sp} P^1_{1,s,A}((0,1))
            \geq BC(D)^{-sp} P^1_{1,s,A}((0,1)).
            $$
            Combining the two cases we finally get the desired result.
        \end{proof}

        \bigskip

        \section{\texorpdfstring{$FOHI(s,A)$}{} and \texorpdfstring{$RFOPI(s,A)$}{} on bounded Lipschitz domains}\label{section hardy}
        In this section we shall prove \cref{Main theorem,Main theorem not true}. Our proof of \cref{Main theorem} is motivated by \cite{dyd}.	We start this section by proving some technical lemmas which will be used in the proof of \cref{Main theorem}. Let $D\subsetneqq\RR^N$ be a non-empty open set. Throughout the section we shall assume, unless stated otherwise, $\Om\subseteq D$, $A\in\D_2$ and $p$ is as in \cref{delta2 cond}.\smallskip
        
        We now fix a function $f\in \test(D)$ and define
        \begin{equation}\label{ G set}
        G=G(f,\Om;l_1,l_2):=\Bigg\lbrace x\in\Om \ \Big| \ A\left(
        \frac{|f(x)|}{\delta_x^s}\right)
        >\frac{2^{p+1}}{l_2\delta_x^N}\I{B(x,l_1\delta_x)\cap\Om}A\left(\frac{|f(x)-f(y)|}{\delta_x^s}\right)dy\Bigg\rbrace;
        \end{equation}
        where $l_1>1,\; l_2$ are positive numbers, independent of $f$, whose values are given later.
        
        \begin{lemma}\label{property1}
            Let $f,l_1,l_2$ be as above. Then	$$\I{\Om\setminus
                G}A\left(\frac{|f(x)|}{\delta_x^s}\right)dx\leq\frac{2^{p+1}l_1^{N+sp}}{l_2}\I{x\in\Om\setminus
                G}\I{y\in\Om} A\left(\frac{|f(x)-f(y)|}{|x-y|^s}\right)\frac{dydx}{|x-y|^N}.$$
        \end{lemma}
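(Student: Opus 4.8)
The plan is to read the definition of $G$ backwards and then trade the $\delta_x$-scaled quantities for the $|x-y|$-scaled ones that appear in the Gagliardo-type integral, paying only a fixed power of $l_1$ for the exchange.

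First, by the very definition of $G$ in \cref{ G set}, every $x\in\Om\setminus G$ satisfies
\[
A\left(\frac{|f(x)|}{\delta_x^s}\right)\leq\frac{2^{p+1}}{l_2\,\delta_x^N}\I{B(x,l_1\delta_x)\cap\Om}A\left(\frac{|f(x)-f(y)|}{\delta_x^s}\right)dy .
\]
Integrating this inequality over $x\in\Om\setminus G$ reduces the lemma to showing that, for each such $x$, the inner average
$\delta_x^{-N}\I{B(x,l_1\delta_x)\cap\Om}A\big(|f(x)-f(y)|/\delta_x^s\big)\,dy$
is controlled by $l_1^{N+sp}\I{\Om}A\big(|f(x)-f(y)|/|x-y|^s\big)\,|x-y|^{-N}\,dy$.

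Next, I would use that on $B(x,l_1\delta_x)$ one has $|x-y|<l_1\delta_x$, hence $\delta_x^{-N}<l_1^N|x-y|^{-N}$ and $\delta_x^{-s}<l_1^s|x-y|^{-s}$, so that $\frac{|f(x)-f(y)|}{\delta_x^s}<l_1^s\,\frac{|f(x)-f(y)|}{|x-y|^s}$. Since $A$ is nondecreasing by \cref{A-property1} and $l_1^s\geq 1$ (because $l_1>1$), the $\D_2$ bound of \cref{lemma delta2} applied with $\lambda=l_1^s$ gives
$A\big(|f(x)-f(y)|/\delta_x^s\big)\leq l_1^{sp}\,A\big(|f(x)-f(y)|/|x-y|^s\big)$.
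Multiplying the two factors yields the pointwise estimate $\delta_x^{-N}A\big(|f(x)-f(y)|/\delta_x^s\big)\leq l_1^{N+sp}\,|x-y|^{-N}A\big(|f(x)-f(y)|/|x-y|^s\big)$ valid for all $y\in B(x,l_1\delta_x)$.

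Finally, since the integrand on the right is nonnegative, I would enlarge the $y$-domain from $B(x,l_1\delta_x)\cap\Om$ to $\Om$ and reassemble: the factor $2^{p+1}/l_2$ from the definition of $\Om\setminus G$, the factor $l_1^N$ from the measure comparison, and the factor $l_1^{sp}$ from the $\D_2$ step combine to exactly $\frac{2^{p+1}l_1^{N+sp}}{l_2}$, which is the claimed constant. There is no genuine obstacle here; the only points needing care are that \cref{lemma delta2} is invoked with the admissible exponent $\lambda=l_1^s\geq1$ (which is why $l_1>1$ is assumed) and that the enlargement of the integration domain is in the increasing direction.
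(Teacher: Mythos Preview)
Your proof is correct and is essentially the same argument as the paper's: both use that $|x-y|\leq l_1\delta_x$ on the ball together with the monotonicity of $A$ and the $\D_2$ estimate of \cref{lemma delta2} to trade $\delta_x$ for $|x-y|$, then invoke the definition of $G$. The only cosmetic difference is the direction of the chain (the paper starts from the Gagliardo integral and bounds it below, applying \cref{lemma delta2} with $\lambda=l_1^{-s}\leq 1$, whereas you start from the Hardy integrand and bound it above with $\lambda=l_1^{s}\geq 1$); the two are equivalent.
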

        \begin{proof}
            For $x\in\Om\setminus G$, using \cref{A-property1,lemma delta2}, we have
            \begin{multline*}
            \I{\Om} A\left(\frac{|f(x)-f(y)|}{|x-y|^s}\right)\frac{dy}{|x-y|^N}
            \geq\I{B(x,l_1\delta_x)\cap\Om}A\left(\frac{|f(x)-f(y)|}{|x-y|^s}\right)\frac{dy}{|x-y|^N}\\
            \geq\I{B(x,l_1\delta_x)\cap\Om}A\left(\frac{|f(x)-f(y)|}{(l_1\delta_x)^s}\right)\frac{dy}{(l_1\delta_x)^N}\\
            \geq\frac{1}{l_1^{N+sp}\delta_x^N}\I{B(x,l_1\delta_x)\cap\Om}A\left(\frac{|f(x)-f(y)|}{\delta_x^s}\right)dy
            \geq\frac{l_2}{l_1^{N+sp}2^{p+1}}A\left(\frac{|f(x)|}{\delta_x^s}\right).
            \end{multline*}
            Integrating over $\Om\setminus G$, we get the desired
            result.
        \end{proof}
        In the view of the above lemma, we can says that the set	$\Om\setminus G$ is a good set for fractional Orlicz Hardy inequality in the sense that the required inequality holds in it. \smallskip
        
        For any $x\in G$, let us define the set $E^*(x)=\left\lbrace y\in
        E\ | \ \frac{|f(x)|}{2}\leq|f(y)|\leq \frac{3}{2}|f(x)|\right\rbrace$. 
        \begin{lemma}\label{property2}
            Let $x\in G$ and $E\subseteq B(x,l_1\delta_x)\cap\Om$. If $\mathcal{L}^{N}(E)\geq l_2\delta_x^N$,
            then $$\frac{\mathcal{L}^{N}(E)}{2}\leq \mathcal{L}^{N}(E)-\frac{l_2\delta_x^N}{2}\leq  \mathcal{L}^{N}(E^*(x)).$$
        \end{lemma}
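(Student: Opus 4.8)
The plan is to reduce everything to the second inequality $\mathcal{L}^{N}(E)-\frac{l_2\delta_x^N}{2}\le \mathcal{L}^{N}(E^*(x))$: the first inequality $\frac{\mathcal{L}^{N}(E)}{2}\le\mathcal{L}^{N}(E)-\frac{l_2\delta_x^N}{2}$ is immediate, since the hypothesis $\mathcal{L}^N(E)\ge l_2\delta_x^N$ gives $\frac{l_2\delta_x^N}{2}\le\frac{\mathcal{L}^N(E)}{2}$. Writing $E\setminus E^*(x)$ for the set of $y\in E$ with $|f(y)|<\frac{|f(x)|}{2}$ or $|f(y)|>\frac{3}{2}|f(x)|$, it then suffices to prove the Chebyshev-type estimate $\mathcal{L}^N(E\setminus E^*(x))\le\frac{l_2\delta_x^N}{2}$.

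First I would record the pointwise bound on the bad set: by the reverse triangle inequality, $|f(x)-f(y)|\ge\frac{|f(x)|}{2}$ for every $y\in E\setminus E^*(x)$ (in the case $|f(y)|<\frac{|f(x)|}{2}$ one has $|f(x)-f(y)|\ge|f(x)|-|f(y)|>\frac{|f(x)|}{2}$, and symmetrically in the other case). Since $A$ is increasing by \cref{A-property1} and $A\in\D_2$, \cref{lemma delta2} applied with $\lambda=2$ gives
$$A\left(\frac{|f(x)-f(y)|}{\delta_x^s}\right)\ge A\left(\frac{|f(x)|}{2\delta_x^s}\right)\ge 2^{-p}A\left(\frac{|f(x)|}{\delta_x^s}\right),\qquad y\in E\setminus E^*(x).$$

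Next I would integrate this inequality over $E\setminus E^*(x)$ and use $E\setminus E^*(x)\subseteq E\subseteq B(x,l_1\delta_x)\cap\Om$ to get
$$2^{-p}\,\mathcal{L}^N(E\setminus E^*(x))\,A\left(\frac{|f(x)|}{\delta_x^s}\right)\le\I{B(x,l_1\delta_x)\cap\Om}A\left(\frac{|f(x)-f(y)|}{\delta_x^s}\right)dy.$$
The defining inequality of the set $G$ in \cref{ G set} bounds the right-hand side above by $\frac{l_2\delta_x^N}{2^{p+1}}A\left(\frac{|f(x)|}{\delta_x^s}\right)$. Since $x\in G$ forces $A(|f(x)|/\delta_x^s)>0$, I can cancel this factor and deduce $\mathcal{L}^N(E\setminus E^*(x))<\frac{l_2\delta_x^N}{2}$; subtracting from $\mathcal{L}^N(E)$ gives $\mathcal{L}^N(E^*(x))>\mathcal{L}^N(E)-\frac{l_2\delta_x^N}{2}$, as required.

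I do not anticipate a genuine obstacle — this is purely a measure estimate — and the only point requiring care is the bookkeeping of the dyadic constants through the $\D_2$ inequality: the factor $2^{p}$ lost when passing from $A(|f(x)|/\delta_x^s)$ to $A(|f(x)|/(2\delta_x^s))$ is precisely what the constant $2^{p+1}$ built into the definition of $G$ is designed to absorb, leaving exactly the extra factor of $2$ in the denominator that produces the bound $\frac{l_2\delta_x^N}{2}$.
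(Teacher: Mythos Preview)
Your proof is correct and follows essentially the same route as the paper: bound $|f(x)-f(y)|\ge\tfrac{|f(x)|}{2}$ on $E\setminus E^*(x)$, use monotonicity of $A$ together with \cref{lemma delta2} to absorb the factor $2$ at the cost of $2^{-p}$, combine with the defining inequality \cref{ G set} of $G$, and cancel the positive factor $A(|f(x)|/\delta_x^s)$ to obtain $\mathcal{L}^N(E\setminus E^*(x))<\tfrac{l_2\delta_x^N}{2}$. Your explicit remark that $x\in G$ forces $A(|f(x)|/\delta_x^s)>0$ (so the cancellation is legitimate) is a nice touch that the paper leaves implicit.
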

        \begin{proof}
            Since $x\in G$, $E\subseteq B(x,l_1\delta_x)\cap\Om$, by \cref{ G set} we have
            \begin{equation*}
            A\left(\frac{|f(x)|}{\delta_x^s}\right)
            \geq\frac{2^{p+1}}{l_2\delta_x^N}\I{B(x,l_1\delta_x)\cap\Om}A\left(\frac{|f(x)-f(y)|}{\delta_x^s}\right)dy
            \geq\frac{2^{p+1}}{l_2\delta_x^N}\I{E\setminus
                E^*(x)}A\left(\frac{|f(x)-f(y)|}{\delta_x^s}\right)dy.
            \end{equation*}
            Note that, $y\in E\setminus E^*(x)$ implies
            $|f(x)-f(y)|\geq |\ |f(x)|-|f(y)|\ |\geq\frac{|f(x)|}{2}$. Thus by using \cref{A-property1,lemma delta2} we get
            $$
            A\left(\frac{|f(x)|}{\delta_x^s}\right)\geq\frac{2}{l_2\delta_x^N}
            A\left(\frac{|f(x)|}{\delta_x^s}\right)\left\lbrace\mathcal{L}^{N}(E)-\mathcal{L}^{N}(E^*(x))\right\rbrace.
            $$
            Using the hypothesis on measure of $E$, we get $\frac{\mathcal{L}^{N}(E)}{2}\leq \mathcal{L}^{N}(E)-\frac{l_2\delta_x^N}{2}\leq  \mathcal{L}^{N}(E^*(x)).$
        \end{proof}
        \smallskip
        
        \begin{lemma}\label{property3}
            Let $E_1\subseteq\Om$ and $E_2\subseteq B(x,l_1\delta_x)\cap\Om$ be such that
            $\mathcal{L}^{N}(E_2)\geq l_2\delta_x^N$ for all $x\in E_1$. Then
            $$
            \I{E_1\cap G}A\left( \frac{|f(x)|}{\delta_x^s}\right)
            dx\leq\frac{2^{p+1}\mathcal{L}^{N}(E_1)}{\mathcal{L}^{N}(E_2)}\I{E_2}A\left(\frac{\sup\lbrace \delta_x^s \ | \ x\in E_2\rbrace \; |f(y)|}{\inf\lbrace \delta_x^s \ | \ x\in E_1\rbrace \;\delta_y^s}\right)dy.
            $$
        \end{lemma}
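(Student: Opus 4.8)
The plan is to estimate the integrand $A(|f(x)|/\delta_x^s)$ pointwise, for each $x\in E_1\cap G$, by an average over the \emph{fixed} set $E_2$ of the quantity occurring on the right, and then to integrate in $x$; the engine is \cref{property2}.

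First I would fix $x\in E_1\cap G$. Since $x\in G$, since $E_2\subseteq B(x,l_1\delta_x)\cap\Om$, and since $\mathcal{L}^{N}(E_2)\ge l_2\delta_x^N$ (all by hypothesis), \cref{property2} applied with $E=E_2$ gives
$$
\mathcal{L}^{N}\big(E_2^*(x)\big)\ \ge\ \frac{\mathcal{L}^{N}(E_2)}{2}\ >\ 0,\qquad E_2^*(x)=\Big\{y\in E_2:\ \tfrac{|f(x)|}{2}\le|f(y)|\le\tfrac{3}{2}|f(x)|\Big\}.
$$
In particular the averaging below is legitimate, so
$$
A\!\left(\frac{|f(x)|}{\delta_x^s}\right)\ =\ \frac{1}{\mathcal{L}^{N}(E_2^*(x))}\I{E_2^*(x)}A\!\left(\frac{|f(x)|}{\delta_x^s}\right)dy\ \le\ \frac{2}{\mathcal{L}^{N}(E_2)}\I{E_2^*(x)}A\!\left(\frac{|f(x)|}{\delta_x^s}\right)dy.
$$

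Next I would compare the integrand pointwise on $E_2^*(x)$. For $y\in E_2^*(x)$ we have $|f(x)|\le 2|f(y)|$; moreover $x\in E_1$ gives $\delta_x^s\ge\inf\{\delta_z^s:z\in E_1\}$ and $y\in E_2$ gives $\delta_y^s\le\sup\{\delta_z^s:z\in E_2\}$. Hence, by monotonicity of $A$ (\cref{A-property1}) followed by \cref{lemma delta2} with $\lambda=2$,
$$
A\!\left(\frac{|f(x)|}{\delta_x^s}\right)\ \le\ A\!\left(\frac{2\,\sup\{\delta_z^s:z\in E_2\}\,|f(y)|}{\inf\{\delta_z^s:z\in E_1\}\,\delta_y^s}\right)\ \le\ 2^{p}\,A\!\left(\frac{\sup\{\delta_z^s:z\in E_2\}\,|f(y)|}{\inf\{\delta_z^s:z\in E_1\}\,\delta_y^s}\right).
$$
Plugging this into the previous display and enlarging the region of integration from $E_2^*(x)$ to $E_2$ (legitimate since the integrand is nonnegative) yields, for every $x\in E_1\cap G$,
$$
A\!\left(\frac{|f(x)|}{\delta_x^s}\right)\ \le\ \frac{2^{p+1}}{\mathcal{L}^{N}(E_2)}\I{E_2}A\!\left(\frac{\sup\{\delta_z^s:z\in E_2\}\,|f(y)|}{\inf\{\delta_z^s:z\in E_1\}\,\delta_y^s}\right)dy.
$$
Finally I would integrate this over $x\in E_1\cap G$; the right-hand side being independent of $x$, this merely contributes the factor $\mathcal{L}^{N}(E_1\cap G)\le\mathcal{L}^{N}(E_1)$, which is exactly the claimed inequality.

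I do not anticipate a genuine obstacle: the whole argument is \cref{property2} plus bookkeeping. The only points needing care are (i) verifying $\mathcal{L}^{N}(E_2^*(x))>0$ so that the averaging identity is valid — this follows from $\mathcal{L}^{N}(E_2)\ge l_2\delta_x^N>0$, which holds because $D\subsetneqq\RR^N$ forces $0<\delta_x<\infty$ — and (ii) pairing $\sup$ with $E_2$ and $\inf$ with $E_1$ correctly in the pointwise comparison; the case $E_1=\emptyset$ is trivial.
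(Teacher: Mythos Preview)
Your proof is correct and uses the same ingredients as the paper's argument: \cref{property2} to guarantee $\mathcal{L}^N(E_2^*(x))\ge\tfrac12\mathcal{L}^N(E_2)$, the pointwise comparison $|f(x)|\le 2|f(y)|$ on $E_2^*(x)$, monotonicity of $A$, and the $\Delta_2$ bound $A(2t)\le 2^pA(t)$.

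There is a minor structural difference worth noting. The paper first bounds the whole integral $\int_{E_1\cap G}A(|f(x)|/\delta_x^s)\,dx$ by $\mathcal{L}^N(E_1\cap G)\,A(\sup_{E_1\cap G}|f|/\inf_{E_1}\delta^s)$, then approximates the supremum by choosing a single near-extremal point $x_0\in E_1\cap G$ (via an auxiliary parameter $\eta>1$), averages over $E_2^*(x_0)$, and finally sends $\eta\to1$. Your version instead applies \cref{property2} pointwise at each $x\in E_1\cap G$, averages over $E_2^*(x)$, and only then integrates in $x$. Your route is slightly more direct---it avoids the $\eta$-approximation and the limiting step altogether---while the paper's version makes the role of the supremum of $|f|$ more explicit. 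Both yield the identical constant $2^{p+1}\mathcal{L}^N(E_1)/\mathcal{L}^N(E_2)$.
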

        \begin{proof}
            Assume that $E_1\cap G$ is non empty. First, we fix $\eta>1$ and pick $x_0\in
            E_1\cap G$ such that $\sup\limits_{x\in E_1\cap G}|f(x)|\leq\eta|f(x_0)|$. Also $|f(x_0)|\leq 2|f(y)|$ for any $y\in E_2^*(x_0)$. Hence, for any
            $y\in E_2^*(x_0)$, using \cref{A-property1,lemma delta2},
            \begin{multline*}
            \I{E_1\cap G}A\left(\frac{|f(x)|}{\delta_x^s}\right)dx
            \leq \mathcal{L}^{N}(E_1\cap G)A\left(\frac{\sup\limits_{x\in E_1\cap
                    G}|f(x)|}{\inf\lbrace \delta_x^s \ | \ x\in E_1\cap G\rbrace }\right)\\
            \leq \mathcal{L}^{N}(E_1\cap G)A\left(\frac{2\eta|f(y)|}{\inf\lbrace \delta_x^s \ | \ x\in E_1\cap G\rbrace}\right)
            \leq 2^p\eta^p\mathcal{L}^{N}(E_1)A\left(\frac{\sup\lbrace \delta_y^s \ | \ y\in E_2^*(x_0)\rbrace \;  |f(y)|}{\inf\lbrace \delta_x^s\ | \ x\in E_1\rbrace \;  \delta_y^s}\right).
            \end{multline*}
            Integrating over $y\in E_2^*(x_0)$, using \cref{property2} and the fact that $E_2^*(x_0)\subseteq E_2$, we obtain
            
            $$
            \I{E_1\cap G}A\left( \frac{|f(x)|}{\delta_x^s}\right)
            dx\leq\frac{2^{p+1}\eta^p \mathcal{L}^{N}(E_1)}{\mathcal{L}^{N}(E_2)}\I{E_2}A\left(\frac{\sup\lbrace \delta_x^s \ | \ x\in E_2\rbrace \; |f(y)|}{\inf\lbrace \delta_x^s\ | \ x\in E_1\rbrace \;  \delta_y^s}\right)dy.
            $$
            Letting $\eta\to 1$ the proof follows.
        \end{proof}
    \begin{lemma}\label{iterative process}
        Let $\Om\subseteq D \subseteq \RR^N$ be two open sets, $f\in\test(D)$, $l_1,l_2>0$, $G=G(f,\Om;l_1,l_2), 0<\gamma<1$ and $m\in\NN$. Assume $\Om=\cup_{j=0}^\infty A_j$, $\mathcal{L}^N(A_i\cap A_j)=0 \ \forall\ j\neq i$ and $\exists\; n_0\in\NN$ such that $f\equiv 0$ on $A_j$ for $j\geq n_0$.
        \begin{enumerate}
            \item If $\ \forall \ j\in\NN$,
            $$
            \I{G\cap A_j}A\left(\frac{|f(x)|}{\delta_x^s}\right)dx\leq \gamma\I{A_{j+m}}A\left(\frac{|f(x)|}{\delta_x^s}\right)dx,
            $$
            then there exists a constant $c>0$ such that
            $$
            \I{\Om}A\left(\frac{|f(x)|}{\delta_x^s}\right)dx\leq c\II{\Om\times\Om}A\left(\frac{|f(x)-f(y)|}{|x-y|^s}\right)\frac{dxdy}{|x-y|^N}.
            $$
            
            \item  If $\ \forall \ j\geq m$,
            $$
            \I{G\cap A_j}A\left(\frac{|f(x)|}{\delta_x^s}\right)dx\leq \gamma\I{A_{j-m}}A\left(\frac{|f(x)|}{\delta_x^s}\right)dx,
            $$
            then there exists a constant $c>0$ such that
            $$
            \I{\Om}A\left(\frac{|f(x)|}{\delta_x^s}\right)dx\leq c\II{\Om\times\Om}A\left(\frac{|f(x)-f(y)|}{|x-y|^s}\right)\frac{dxdy}{|x-y|^N}+c\I{A_0\cup\cdots\cup A_{m-1}}A\left(\frac{|f(x)|}{\delta_x^s}\right)dx.
            $$
        \end{enumerate}
    \end{lemma}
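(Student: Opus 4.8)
The plan is to split $\Om$ into the \emph{good set} $\Om\setminus G$, on which \cref{property1} already bounds $\I{\Om\setminus G}A(|f(x)|/\delta_x^s)\,dx$ by a fixed multiple of the Gagliardo energy $\II{\Om\times\Om}A(|f(x)-f(y)|/|x-y|^s)\,|x-y|^{-N}\,dxdy$, and the \emph{bad set} $\Om\cap G=\bigcup_{j}(G\cap A_j)$, on which the hypotheses let us trade an integral over $G\cap A_j$ for a $\gamma$-damped integral over a shifted block $A_{j\pm m}$. Introduce the shorthand $a_j:=\I{G\cap A_j}A(|f(x)|/\delta_x^s)\,dx$, $b_j:=\I{A_j\setminus G}A(|f(x)|/\delta_x^s)\,dx$ and $c_j:=\I{A_j}A(|f(x)|/\delta_x^s)\,dx=a_j+b_j$. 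All of these are finite, since $f\in\test(D)$ is supported in a compact subset of $D$ on which $\delta_x$ is bounded away from $0$, and moreover $a_j=b_j=c_j=0$ for $j\geq n_0$ because $f\equiv 0$ on such $A_j$. Using the disjointness (up to null sets) of the $A_j$, one has $\I{\Om}A(|f|/\delta^s)=\sum_j c_j$, $\I{\Om\setminus G}A(|f|/\delta^s)=\sum_j b_j$ and $\I{\Om\cap G}A(|f|/\delta^s)=\sum_j a_j$, so the whole task reduces to bounding $\sum_j a_j$ by $\sum_j b_j$ (plus $\sum_{j<m}c_j$ in case (2)).

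For part (1) the hypothesis reads $a_j\leq\gamma c_{j+m}=\gamma a_{j+m}+\gamma b_{j+m}$ for all $j$. I would iterate this \emph{upward}: since $a_{j+km}=0$ as soon as $j+km\geq n_0$, the recursion terminates after finitely many steps and gives $a_j\leq\sum_{k\geq 1}\gamma^k b_{j+km}$. Summing over $j$, interchanging the (finite) sums, and noting that for each fixed $k$ the indices $j+km$ are pairwise distinct, one obtains $\sum_j a_j\leq\sum_{k\geq 1}\gamma^k\sum_i b_i=\tfrac{\gamma}{1-\gamma}\sum_i b_i$. Hence $\I{\Om}A(|f|/\delta^s)=\sum_i b_i+\sum_j a_j\leq\tfrac{1}{1-\gamma}\sum_i b_i=\tfrac{1}{1-\gamma}\I{\Om\setminus G}A(|f|/\delta^s)$, and \cref{property1} finishes the proof with $c=\tfrac{2^{p+1}l_1^{N+sp}}{(1-\gamma)l_2}$.

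For part (2) the hypothesis reads $a_j\leq\gamma a_{j-m}+\gamma b_{j-m}$ for $j\geq m$, so I would iterate \emph{downward}, peeling off one factor of $\gamma$ per step until the index lands in $\{0,\dots,m-1\}$, where I use the trivial bound $a_r\leq c_r$ (as $G\cap A_r\subseteq A_r$). Writing $j=Km+r$ with $0\leq r\leq m-1$ and $K=\lfloor j/m\rfloor$, this produces $a_j\leq\sum_{k=1}^{K}\gamma^k b_{j-km}+\gamma^K c_r$. Summing over $j\geq m$, the $b$-terms contribute $\sum_{k\geq 1}\gamma^k\sum_i b_i=\tfrac{\gamma}{1-\gamma}\sum_i b_i$ just as before, while reorganizing the $c_r$-terms by residue class mod $m$ (for each $r$ the exponent $K$ runs over $1,2,3,\dots$) contributes $\tfrac{\gamma}{1-\gamma}\sum_{r=0}^{m-1}c_r$; together with $\sum_{j=0}^{m-1}a_j\leq\sum_{r=0}^{m-1}c_r$ this gives $\sum_j a_j\leq\tfrac{\gamma}{1-\gamma}\sum_i b_i+\tfrac{1}{1-\gamma}\sum_{r=0}^{m-1}c_r$. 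Adding $\sum_i b_i$, recognizing $\sum_{r=0}^{m-1}c_r=\I{A_0\cup\cdots\cup A_{m-1}}A(|f|/\delta^s)$, and applying \cref{property1} to $\sum_i b_i$ yields the claimed inequality.

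The analytic substance sits entirely in the first reduction, i.e. in \cref{property1}; everything else is elementary geometric-series bookkeeping. I expect the only point that needs care is justifying that each manipulation is legitimate — finiteness of all the sums and termination of the upward iteration — which is exactly where the compact support of $f$ in $D$ and the hypothesis that $f\equiv 0$ on $A_j$ for $j\geq n_0$ are used.
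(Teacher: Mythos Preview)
Your proposal is correct and follows essentially the same approach as the paper: both arguments iterate the hypothesis (upward in part (1), downward in part (2)), split each $A_j$ into its intersection with $G$ and its complement, sum the resulting geometric series, and then invoke \cref{property1} to control the good-set contribution. Your shorthand $a_j,b_j,c_j$ streamlines the bookkeeping, but the logical structure and the use of compact support to terminate the iteration are identical to the paper's proof.
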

\begin{proof}
    (1)
    \begin{multline*}
    \I{\Om\cap
        G}A\left(\frac{|f(x)|}{\delta_x^s}\right)dx
    =\sum_{j=0}^{\infty}\I{A_j\cap
        G}A\left(\frac{|f(x)|}{\delta_x^s}\right)dx
    \leq \sum_{j=0}^{\infty}\gamma\I{A_{j+m}}A\left(\frac{|f(x)|}{\delta_x^s}\right)dx\\
    =\sum_{j=0}^{\infty}\gamma\left[\I{A_{j+m}\cap G}A\left(\frac{|f(x)|}{\delta_x^s}\right)dx+\I{A_{j+m}\setminus G}A\left(\frac{|f(x)|}{\delta_x^s}\right)dx\right]\\
    \leq \sum_{j=0}^{\infty}\gamma\left[\gamma\I{A_{j+2m}}A\left(\frac{|f(x)|}{\delta_x^s}\right)dx+\I{A_{j+m}\setminus G}A\left(\frac{|f(x)|}{\delta_x^s}\right)dx\right]\\
    \leq\cdots
    \leq \sum_{j=0}^{\infty}\sum_{k=1}^{\infty}\gamma^{k}\I{A_{j+km}\setminus G}A\left(\frac{|f(x)|}{\delta_x^s}\right)dx.
    \end{multline*}
    Here we used the fact that $f\equiv 0$ for large $j$ forcing the iterative process to terminate. So we have
    \begin{equation*}
    \I{\Om\cap
        G}A\left(\frac{|f(x)|}{\delta_x^s}\right)dx
    \leq \sum_{k=1}^{\infty}\gamma^k\I{\Om\setminus G}A\left(\frac{|f(x)|}{\delta_x^s}\right)dx.
    \end{equation*}
    Hence by using above estimate and \cref{property1} we get
    \begin{multline*}
    \I{\Om} A\left(\frac{|f(x)|}{\delta_x^s}\right)dx
    =\I{\Om\cap G} A\left(\frac{|f(x)|}{\delta_x^s}\right)dx+\I{\Om\setminus G} A\left(\frac{|f(x)|}{\delta_x^s}\right)dx\\
    \leq\sum_{k=0}^\infty\gamma^k\I{\Om\setminus
        G}A\left(\frac{|f(x)|}{\delta_x^s}\right)dx
    \leq 
    c\I{\Om}\I{\Om}
    A\left(\frac{|f(x)-f(y)|}{|x-y|^s}\right)\frac{dxdy}{|x-y|^N}.
    \end{multline*}
    This completes the proof of (1).\\
    
    (2) Let for each $j\geq 0$, $k_j$ be the largest nonnegative integer such that $j-k_jm \geq 0$, so that $0\leq j-k_jm\leq m-1$. Then $k_j$-many repeated applications of the inequality in the hypothesis gives
    \begin{multline*}
        \I{A_j\cap G}A\left( \frac{|f(x)|}{\delta_x^s}\right)
        dx
        \leq  \gamma\I{A_{j-m}}A\left(\frac{|f(y)|}{\delta_y^s}\right)dy\\
        = \gamma \left[ \I{A_{j-m}\setminus G}A\left(\frac{|f(y)|}{\delta_y^s}\right)dy+\I{A_{j-m}\cap G }A\left(\frac{|f(y)|}{\delta_y^s}\right)dy\right]\\
        \leq\cdots \leq \gamma^{k_j}\I{A_{j-k_jm}\cap G }A\left(\frac{|f(y)|}{\delta_y^s}\right)dy\ +\ \sum_{k=1}^{k_j}\gamma^k\I{A_{j-km}\setminus G }A\left(\frac{|f(y)|}{\delta_y^s}\right)dy\\
        \leq \gamma^{k_j}\I{A_0\cup\cdots\cup A_{m-1}} A\left(\frac{|f(y)|}{\delta_y^s}\right)dy\ +\ \sum_{k=1}^{k_j}\gamma^k\I{A_{j-km}\setminus G }A\left(\frac{|f(y)|}{\delta_y^s}\right)dy.
    \end{multline*}
    Summing over $j$,
    
    \begin{equation*}
    \I{\Om\cap G}A\left( \frac{|f(x)|}{\delta_x^s}\right)
    dx
    \leq\ \frac{\gamma}{1-\gamma}\left[\ \I{A_0\cup\cdots\cup A_{m-1}}A\left( \frac{|f(x)|}{\delta_x^s}\right)
    dx+\ \I{\Om\setminus G}A\left( \frac{|f(x)|}{\delta_x^s}\right)
    dx\right].
    \end{equation*}
    Proceeding as in part (1), the lemma follows.
    
\end{proof}
        Before proving \cref{Main theorem}, we need a geometric decomposition of a bounded Lipschitz domain $D$, given in \cite[p.~581]{dyd}. We outline the construction for the sake of completeness.\smallskip
        
        Let us denote $x=(x_1,x_2,\cdots,x_{N-1},x_N)=(\tilde{x},x_N)\in\RR^N$ with
        $\tilde{x}\in\RR^{N-1}$, $x_N\in\RR$. $D\subsetneqq\RR^N$ shall be assumed to be a bounded Lipschitz domain throughout the rest of the section. For any $z\in\partial D$ there are linear isometry $L_z:\RR^N\to\RR^N$ and Lipschitz
        function $\phi_z:\RR^{N-1}\to\RR$ such that $\mbox{Lip}(\phi_z)\leq \mbox{Lip}(D)$ and
        $$
        L_z(D)\cap B(L_z(z),r_0)=\ \lbrace x\in\RR^N\ | \ x_N>\phi_z(\tilde{x})\rbrace \cap B(L_z(z),r_0),
        $$
        for some positive $r_0$ which depends only on $D$(existence of $r_0$ is guaranteed because $\partial D$ is compact). Without loss of generality, we can assume $L_z$ to be the identity map. Because otherwise we can work with the Lipschitz domain $L_z(D)$ and the point $L_z(z)$, then pull back the construction to the original domain $D$ and the point $z$ via $L_z^{-1}$. For $x\in\RR^N$ we set
        $$
        V_z(x):=|x_N-\phi_z(\tilde{x})|.
        $$ 
        For $E\subseteq\RR^{N-1}$ and $r>0$ define
        $$
        Q_z(E,r):=\lbrace x\in D\ | \ \tilde{x}\in E,0<V_z(x)\leq r\rbrace. 
        $$
        Set
        $$
        K_r:=\left\{ x\in\RR^{N-1} \ \big| \ |x_\ell-z_\ell|\leq \frac{r}{2},\;\ell=1,2,\cdots,N-1\right\}.
        $$
        That is $K_r$ is an $N-1$ dimensional square of side-length $r$.
        $Q_z(K_\rho,r)$ is referred to as a Lipschitz box and is denoted by $Q_z(r)$.
        We now choose $\rho>0$ small enough such that $Q_z(\rho)\subseteq D\cap B(z,r_0/2)$.
        Let $x\in Q_z(\rho)$. Then using the fact that $\phi_z(w)=w_N$ for any $w\in\partial D\cap B(z,r_0)$,
        $$
        V_z(x)=|x_N-\phi_z(\tilde{x})|\leq |x_N-w_N|+|\phi_z(\tilde{w})-\phi_z(\tilde{x})|\leq |x_N-w_N|+\mbox{Lip}(D) |\tilde{w}-\tilde{x}|
        \leq (1+\mbox{Lip}(D))|w-x|.
        $$
        Since $w$ is arbitrary, we get
        \begin{equation}\label{dist est1}
        \frac{V_z(x)}{(1+\mbox{Lip}(D))}\leq\delta_x\leq V_z(x),\quad \forall \ x\in Q_z(\rho).
        \end{equation}
        For $j\in\NN\cup\lbrace0\rbrace$ consider the dyadic decomposition of $K_\rho$ (by dissecting sides first and then decomposing the cube into smaller cubes and proceeding by induction) into the union of $(N-1)$-dimensional cubes $K^i_j$ for $1\leq i\leq2^{j(N-1)}$. Now for each $j$, $K^i_j$'s have
        disjoint interiors and have sides-length $\frac{\rho}{2^j}$. Set $Q^i_j:=Q_z(K^i_j,\rho)$ and define
        \begin{equation}\label{Ak set}
        A_k=Q_z(K_\rho,\rho/2^k)\setminus Q_z(K_\rho,\rho/2^{k+1}), \text{ for
        }k\in\NN\cup\lbrace0\rbrace.
        \end{equation}
        Then $A_k$'s are mutually disjoint sets with $\cup_{k=0}^\infty
        A_k=Q_z(\rho)$. Thus the Lipschitz box $Q_z(\rho)$ is the union of sets $A_j\cap Q^i_j$, $i,j\in\NN$ whose pairwise intersections are measure zero sets. Moreover using \cref{Ak set,dist est1}, for $k\geq j$, we have
        \begin{equation}\label{dist est2}
        \frac{\rho}{2^{k+1}(1+\mbox{Lip}(D))}\leq\delta_x\leq\frac{\rho}{2^k},
        \text{ for }x\in A_k\cap Q^i_j
        \end{equation}
        and
        \begin{equation}\label{measure}
        \mathcal{L}^{N}(A_k\cap Q^i_j)=\left(\frac{\rho}{2^k}-\frac{\rho}{2^{k+1}}\right)\left(\frac{\rho}{2^j}\right)^{N-1}=\frac{\rho^N}{2^{k+1+jN-j}}.
        \end{equation}
        Let $x\in A_k\cap Q^i_j$ and $y\in A_j\cap Q^i_j$ where $k\geq j$. Then $\frac{\rho}{2^{k+1}} \leq |x_N-\phi_z(\tilde{x})| \leq \frac{\rho}{2^{k}}$ and $\frac{\rho}{2^{j+1}} \leq |y_N-\phi_z(\tilde{y})| \leq \frac{\rho}{2^{j}}$. Moreover both $x$ and $y$ lie `above' the same dyadic cube $Q^i_j$ which has side length $\frac{\rho}{2^j}$. So for $1\leq \ell \leq N-1$, $|x_\ell-y_\ell|\leq\frac{\rho}{2^{j}}$. Therefore $|\tilde{x}-\tilde{y}|\leq\sqrt{N-1}\frac{\rho}{2^{j}}$. Again, using these
        \begin{multline*}
        |x_N-y_N|=|x_N-\phi_z(\tilde{x})+\phi_z(\tilde{x})-\phi_z(\tilde{y})+\phi_z(\tilde{y})-y_N|\leq \frac{\rho}{2^k}+\mbox{Lip}(D)|\tilde{x}-\tilde{y}|+\frac{\rho}{2^j}\\
        \leq \frac{\rho}{2^{j-1}}+\mbox{Lip}(D)\sqrt{N-1}\frac{\rho}{2^{j}}.
        \end{multline*}
        So we finally get
        \begin{multline*}
        |x-y|^2\leq \left(\sqrt{N-1}\frac{\rho}{2^{j}}\right)^2+ \left(\frac{\rho}{2^{j-1}}+\mbox{Lip}(D)\sqrt{N-1}\frac{\rho}{2^{j}}\right)^2\\
        =\left(\frac{\rho}{2^{j}}\right)^2\left(N-1+ \left(2+\mbox{Lip}(D)\sqrt{N-1}\right)^2\right).
        \end{multline*}
        This gives us for $k\geq j$,
        \begin{equation}\label{dist btwn pts}
        |x-y|\leq\frac{\rho}{2^{j}}\sqrt{N-1+(\mbox{Lip}(D)\sqrt{N-1}+2)^2}\quad\mbox{ for }x\in A_k\cap Q^i_j\mbox{ and }y\in A_j\cap Q^i_j.
        \end{equation}
        
        \begin{figure}[H]
            \includegraphics[height=7cm]{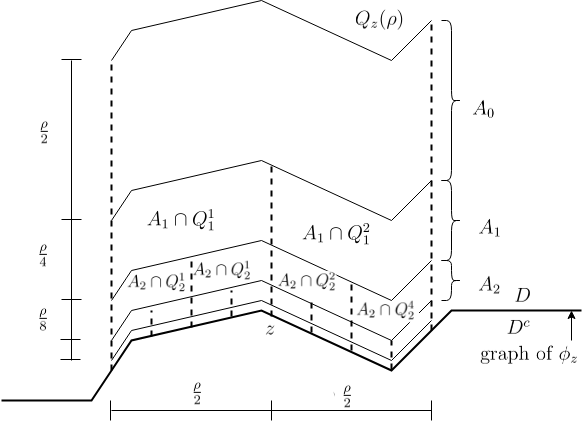}
            \caption{Lipschitz box $Q_z(\rho),\;N=2.$}
            \label{figure}
        \end{figure}
        For the next result we consider $Q=Q_z(\rho)$ defined above.
        \begin{lemma}\label{Hardy in cube}
            Let $\lim\limits_{\lambda\to 0+}\alpha_{s,A}(\lambda)=0$, where $\alpha_{s,A}$
            is as in \cref{alpha fnc}. Then there exists a constant $c=c(D,N,A,s)>0$ such that for all $f\in \test(D)$ we have
            $$
            \I{Q_z(\rho)} A\left(\frac{|f(x)|}{\delta_x^s}\right)dx\leq c\II{Q_z(\rho)\times Q_z(\rho)}
            A\left(\frac{|f(x)-f(y)|}{|x-y|^s}\right)\frac{dxdy}{|x-y|^N}.
            $$
        \end{lemma}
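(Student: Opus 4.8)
The plan is to run the iterative scheme of \cref{iterative process}(1) on $\Om=Q_z(\rho)=\bigcup_{k\ge 0}A_k$, with the rings $A_k$ from \eqref{Ak set}. These are pairwise disjoint up to null sets, and since $f\in\test(D)$ vanishes in a neighbourhood of $\partial D$ while $\delta_x\le V_z(x)\le\rho/2^k$ on $A_k$ by \eqref{dist est1}, there is $n_0$ with $f\equiv 0$ on $A_k$ for $k\ge n_0$; so the termination hypothesis of \cref{iterative process} is met. It therefore suffices to exhibit fixed parameters $l_1>1$, $l_2>0$ (defining $G=G(f,Q_z(\rho);l_1,l_2)$), an integer $m$, and a constant $\gamma<1$, all depending only on $N,D,A,s$, such that
\[
\I{G\cap A_j}A\!\left(\frac{|f(x)|}{\delta_x^s}\right)dx\ \le\ \gamma\,\I{A_{j+m}}A\!\left(\frac{|f(x)|}{\delta_x^s}\right)dx,\qquad j=0,1,2,\dots
\]

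I would prove this one coarse column at a time: for each $j$ and each dyadic cube $K^i_j$ at level $j$, apply \cref{property3} with $E_1=A_j\cap Q^i_j$ and $E_2=A_{j+m}\cap Q^i_j$, i.e.\ pair the level-$j$ ring over the cube with the level-$(j+m)$ ring over the \emph{same} cube. Writing $L:=\mbox{Lip}(D)$ and $C_D:=\sqrt{N-1+(L\sqrt{N-1}+2)^2}$, set $l_1:=2(1+L)C_D$, which is $>1$. The three hypotheses of \cref{property3} follow from the geometric estimates already recorded: by \eqref{dist btwn pts} every $y\in E_2$ and $x\in E_1$ satisfy $|x-y|\le C_D\rho/2^{j}$, and $\delta_x\ge\rho/(2^{j+1}(1+L))$ on $E_1$ by \eqref{dist est2}, so $E_2\subseteq B(x,l_1\delta_x)\cap\Om$ for all $x\in E_1$; by \eqref{measure}, $\mathcal L^N(E_1)=\rho^N 2^{-jN-1}$ and $\mathcal L^N(E_2)=\rho^N 2^{-jN-m-1}$, so choosing $l_2:=2^{-m-1}$ guarantees $\mathcal L^N(E_2)\ge l_2\delta_x^N$ on $E_1$ (since $\delta_x\le\rho/2^j$ there by \eqref{dist est2}); and again by \eqref{dist est2}, $\sup_{E_2}\delta_x^s\le(\rho/2^{j+m})^s$ while $\inf_{E_1}\delta_x^s\ge(\rho/(2^{j+1}(1+L)))^s$, so the ratio of these two quantities, which appears inside $A$ in the conclusion of \cref{property3}, is at most $\mu:=\big(2^{1-m}(1+L)\big)^{s}$. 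Using the monotonicity of $A$ (\cref{A-property1}) together with $A(\mu t)\le\mu^{1/s}\alpha_{s,A}(\mu)A(t)$ from \eqref{alpha fnc}, \cref{property3} then gives
\[
\I{(A_j\cap Q^i_j)\cap G}A\!\left(\frac{|f(x)|}{\delta_x^s}\right)dx\ \le\ 2^{p+1}\,\frac{\mathcal L^N(E_1)}{\mathcal L^N(E_2)}\,\mu^{1/s}\alpha_{s,A}(\mu)\,\I{A_{j+m}\cap Q^i_j}A\!\left(\frac{|f(y)|}{\delta_y^s}\right)dy,
\]
and since $\tfrac{\mathcal L^N(E_1)}{\mathcal L^N(E_2)}\cdot\mu^{1/s}=2^m\cdot 2^{1-m}(1+L)=2(1+L)$ is independent of $m$, the prefactor equals $\gamma:=2^{p+2}(1+L)\,\alpha_{s,A}(\mu)$, uniform in $i$ and $j$. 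Summing over the cubes $K^i_j$ (which tile the base with measure-zero overlaps, at both levels $j$ and $j+m$) yields the displayed inequality above with this $\gamma$.

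Finally I would make $\gamma<1$: as $m\to\infty$ we have $\mu=(2^{1-m}(1+L))^s\to 0$, so the hypothesis $\lim_{\lambda\to 0+}\alpha_{s,A}(\lambda)=0$ lets us fix $m=m(N,D,A,s)$ (hence $l_2=2^{-m-1}$) so large that $\gamma=2^{p+2}(1+L)\alpha_{s,A}(\mu)<1$. With these choices \cref{iterative process}(1) applies verbatim and produces
\[
\I{Q_z(\rho)}A\!\left(\frac{|f(x)|}{\delta_x^s}\right)dx\ \le\ c\,\II{Q_z(\rho)\times Q_z(\rho)}A\!\left(\frac{|f(x)-f(y)|}{|x-y|^s}\right)\frac{dx\,dy}{|x-y|^N},
\]
with $c$ depending only on $N,D,A,s$. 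The step that really needs care — and is the heart of the matter — is the balancing in the middle paragraph: the measure ratio $2^m$ between the two rings must be exactly annihilated by the scaling gain $\mu^{1/s}=2^{1-m}(1+L)$ coming from the extra fractional power in the Hardy quotient, so that the only $m$-dependence left in $\gamma$ sits inside $\alpha_{s,A}(\mu)$, which is forced to be small precisely by the standing hypothesis on $A$ and $s$.
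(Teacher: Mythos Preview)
Your proof is correct and follows essentially the same approach as the paper: the same choices $l_1=2(1+L)C_D$, $l_2=2^{-m-1}$, the same pairing $E_1=A_j\cap Q^i_j$, $E_2=A_{j+m}\cap Q^i_j$ in \cref{property3}, the same balancing $2^{p+1}\cdot 2^m\cdot\mu^{1/s}=2^{p+2}(1+L)$, and the same appeal to \cref{iterative process}(1). The only differences are cosmetic (you fix $m$ at the end so that $\gamma<1$, while the paper fixes it at the start so that $\gamma<\tfrac12$; and you spell out the termination hypothesis $f\equiv 0$ on $A_k$ for large $k$, which the paper leaves implicit).
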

        \begin{proof}
            From the hypothesis on $s,A$ it follows that there exists $m\in\mathbb N$ such that 
            $$
            2^{2+p}(1+\mbox{Lip}(D))\alpha_{s,A}(2^{s-ms}(1+\mbox{Lip}(D))^{s})<\frac{1}{2}.$$ Set 
            
            $$
            l_1=2(1+\mbox{Lip}(D))\sqrt{N-1+(\mbox{Lip}(D)\sqrt{N-1}+2)^2}
            \mbox{ and }l_2=\frac{1}{2^{m+1}}.
            $$
            Let $f\in \test(D)$ and $G=G(f,Q_z(\rho);l_1,l_2)$. Set $E_1:=A_j\cap Q^i_j$ and $E_2:=A_{j+m}\cap Q^i_j$. For these choices of $l_1$ and $l_2$, from \cref{measure,dist est2}, we get $\mathcal{L}^{N}(E_2)=\frac{ \rho^N}{2^{1+m+jN}}$ and  $\delta_x\leq\frac{
                \rho}{2^{j+m}}\text{ for }x\in E_2$. Also utilizing \cref{dist btwn pts,dist est2}, $E_2\subseteq B(x,l_1\delta_x)$. Thus the
            sets $E_1$ and $E_2$ satisfy the hypotheses of \cref{property3}. Thus we obtain, using \cref{property3}, \cref{dist est2} and the choice of $m$,
            \begin{multline*}
            \I{(A_j\cap Q^i_j)\cap G} A\left(\frac{|f(x)|}{\delta_x^s}\right)
            dx
            \leq 2^{1+p+m}\I{A_{j+m}\cap
                Q^i_j}A\left(2^{s-ms}(1+\mbox{Lip}(D))^{s}\frac{|f(y)|}{\delta_y^s}\right)dy\\
            =2^{2+p}(1+\mbox{Lip}(D))\I{A_{j+m}\cap
                Q^i_j}\frac{A\left(2^{s-ms}(1+\mbox{Lip}(D))^{s}\frac{|f(y)|}{\delta_y^s}\right)}{2^{1-m}(1+\mbox{Lip}(D))A\left(\frac{|f(y)|}{\delta_y^s}\right)}A\left(\frac{|f(y)|}{\delta_y^s}\right)dy\\
            \leq 2^{2+p}(1+\mbox{Lip}(D))\alpha_{s,A}(2^{s-ms}(1+\mbox{Lip}(D))^{s})\I{A_{j+m}\cap
                Q^i_j}A\left(\frac{|f(y)|}{\delta_y^s}\right)dy
            \leq\frac{1}{2}\I{A_{j+m}\cap Q^i_j}A\left(\frac{|f(y)|}{\delta_y^s}\right)dy.
            \end{multline*}
            Summing over $i=1,\cdots,2^{j(N-1)}$ we obtain
            
            \begin{equation}\label{iteration}
            \I{A_j\cap
                G}A\left(\frac{|f(x)|}{\delta_x^s}\right)dx
            \leq \frac{1}{2}\I{A_{j+m}}A\left(\frac{|f(x)|}{\delta_x^s}\right)dx.
            \end{equation}
           The lemma follows from \cref{iterative process}.
        \end{proof}
        \begin{proof}[\textbf{Proof of \Cref{Main theorem}}] Consider the
            following two sets
            
            $$
            D_1=\lbrace x\in D \ | \ \delta_x\geq\tilde{\rho}\rbrace ,
            \text{ and }
            D_2=\lbrace x\in D \ | \ \delta_x<\tilde{\rho}\rbrace ,
            $$
            where $\tilde{\rho}>0$ is sufficiently small. Then by \cref{Hardy in
                cube} and compactness of $\partial D$, we have
            \begin{equation}\label{bdd lip estmt 1}
            \I{D_2} A\left(\frac{|f(x)|}{\delta_x^s}\right)dx\leq\sigma
            c\I{D_2}\I{D_2}
            A\left(\frac{|f(x)-f(y)|}{|x-y|^s}\right)\frac{dxdy}{|x-y|^N},
            \end{equation}
            since $D_2$ may be covered by sets of the form $Q_{z_k}(\rho)$ such that
            every $x\in D_2$ belongs to at-most $\sigma=\sigma(D,N)\in\NN$ sets of type
            $Q_{z_k}(\rho)$. This is possible for sufficiently small
            $\tilde{\rho}<\rho$, and such a $\tilde{\rho}$ may be chosen to depend
            only
            on $\mbox{Lip}(D),\;N,\;r_0$. We now take $G=G(f,D;l_1,l_2)$, where $l_1=\frac{\mbox{diam}(D)}{\rho}$ and $l_2=\frac{\mathcal{L}^{N}(D_2)}{\mbox{diam}(D)^N}$. Set $E_1=D_1$ and $E_2=D_2$. Then for any $x\in E_1$,
            $$
            l_1\delta_x=\mbox{diam}(D)\frac{\delta_x}{\rho}\geq \mbox{diam}(D),
            $$
            which implies $E_2\subseteq B(x,l_1\delta_x)$ and 
            $$
            \mathcal{L}^N(E_2)=\mbox{diam}(D)^Nl_2\geq \delta_x^Nl_2.
            $$
            We can now apply \cref{property3}
            to get 
            \begin{equation}\label{bdd lip estmt 2}
            \I{D_1\cap G}A\left(\frac{|f(x)|}{\delta_x^s}\right)dx\leq
            c(E_1,E_2)\I{D_2}A\left(\frac{|f(x)|}{\delta_x^s}\right)dx,
            \end{equation}
            and also from \cref{property1} we get
            \begin{equation}\label{bdd lip estmt 3}
            \I{D_1\setminus G}A\left(\frac{|f(x)|}{\delta_x^s}\right)dx\leq
            c\I{D_1\setminus G}\I{D}
            A\left(\frac{|f(x)-f(y)|}{|x-y|^s}\right)\frac{dydx}{|x-y|^N}.
            \end{equation}
            Combining the estimates \cref{bdd lip estmt 1}, \cref{bdd lip estmt 2}
            and \cref{bdd lip estmt 3} we conclude $H_{N,s,A}(D)>0$.
            If $A\in\D_2$, then using \cref{hardy to poincare}, $P^1_{N,s,A}(D)>0$. This completes the proof.
        \end{proof}
            
            \begin{proof}[\textbf{Proof of \Cref{Main theorem not true}}]
                For any $\varepsilon\in(0,1)$ denote $D_\varepsilon:=\lbrace x\in D \ | \
                \mbox{dist}(x,\partial D)<\varepsilon\rbrace $. Let $f_\varepsilon\in \test(D)$
                be such that $f_\varepsilon\equiv1$ on $D\setminus D_\varepsilon$,
                $0\leq f_\varepsilon\leq 1$ on $D$ and $|\nabla
                f_\varepsilon|<\frac{c_1}{\varepsilon}$ on $D$ for some constant $c_1>0$.
                Now
                \begin{multline*}
                \II{D \times D}A\left(\frac{| f_\varepsilon(x)-f_\varepsilon(y) |}{| x-y|^s}\right)\frac{dxdy}{|x-y|^N}=\II{D_\varepsilon \times
                    D_\varepsilon}A\left(\frac{| f_\varepsilon(x)-f_\varepsilon(y) |}{| x-y
                    |^s}\right)\frac{dxdy}{|x-y|^N}\\
                + 2\II{D_\varepsilon^c \times D_\varepsilon}A\left(\frac{|
                    f_\varepsilon(x)-f_\varepsilon(y) |}{| x-y |^s}\right)\frac{dxdy}{|x-y|^N}
                \leq 3\II{D \times D_\varepsilon}A\left(\frac{|
                    f_\varepsilon(x)-f_\varepsilon(y) |}{|x-y|^s}\right)\frac{dxdy}{|x-y|^N}\\
                \leq 3 \I{D_\varepsilon}\Bigg\lbrace  \I{y\in D, |x-y|<\varepsilon}A\left(
                \frac{| f_\varepsilon(x)-f_\varepsilon(y) |}{| x-y
                    |^s}\right)\frac{dy}{|x-y|^N}
                \\+ \I{y\in D, |x-y|>\varepsilon}A\left( \frac{|
                    f_\varepsilon(x)-f_\varepsilon(y) |}{| x-y
                    |^s}\right)\frac{dy}{|x-y|^N}\Bigg\rbrace  dx
                =3 \I{D_\varepsilon}(I_1(x)+I_2(x))dx.
                \end{multline*}
                Using $|\nabla f_\varepsilon|<\frac{c_1}{\varepsilon}$ on $D$, we get
                \begin{multline*}
                I_1(x)=\I{y\in D,|x-y|<\varepsilon}A\left(\frac{|
                    f_\varepsilon(x)-f_\varepsilon(y) |}{| x-y
                    |^s}\right)\frac{dy}{|x-y|^N}\leq \I{|x-y|<\varepsilon}A\left(\frac{c_1|
                    x-y|^{1-s}}{\varepsilon}\right)\frac{dy}{|x-y|^N}\\
                =\I{|y|<\varepsilon}A\left(\frac{c_1|y|^{1-s}}{\varepsilon}\right)\frac{dy}{|y|^N}
                \leq c\I{0}^{\varepsilon}A\left(\frac{c_1r^{1-s}}{\varepsilon}\right)\frac{dr}{r} =c\I{0}^{c_1\varepsilon^{-s}} \frac{A(z)}{ z}dz.
                \end{multline*}
                The fact that $0\leq f_\varepsilon(x)\leq1$ on $D$ gives
                \begin{multline*}
                I_2(x)=\I{y\in D, |x-y|>\varepsilon}A\left(\frac{|
                    f_\varepsilon(x)-f_\varepsilon(y) |}{| x-y
                    |^s}\right)\frac{dy}{|x-y|^N}\leq \I{|x-y|>\varepsilon}A(2|
                x-y|^{-s})\frac{dy}{|x-y|^N}\\
                = c\I{\varepsilon}^\infty \frac{A(2r^{-s})}{ r}dr\leq c
                \I0^{2\varepsilon^{-s}}\frac{A(z)}{z}dz.
                \end{multline*}
                The hypothesis on $D$ that it has bounded Lipschitz boundary implies that $\mathcal{L}^N(D_\varepsilon)$ is bounded above as well as bounded bellow by a constant multiple of $\varepsilon$. Let $\frac{1}{c_2}:=\max\lbrace c_1,2\rbrace$. Then we have, as $\varepsilon\to0$,
                \begin{equation}\label{th2-1}
                \II{D \times D}A\left(\frac{| f_\varepsilon(x)-f_\varepsilon(y) |}{| x-y
                    |^s}\right)\frac{dxdy}{|x-y|^N}\leq c\mathcal{L}^N(D_\varepsilon)
                \I0^{(c_2\varepsilon)^{-s}}\frac{A(z)}{z}dz
                \leq c\ (c_2\varepsilon)
                \I0^{(c_2\varepsilon)^{-s}}\frac{A(z)}{z}dz\to\beta.
                \end{equation}
                Now $f_\varepsilon \in \test(D)$, $f_\varepsilon\to 1$ pointwise a.e. in $D$ and $A$ is continuous. Therefore by Fatou's lemma,
                \begin{equation}\label{th2-2}
                \lim\limits_{\varepsilon\to 0}\I{D}A\left(\frac{| f_\varepsilon(x)|}{\delta_x^s}\right)dx \geq \I{D}A\left(\frac{1}{\delta_x^s}\right)dx>0
                \end{equation}
                and
                \begin{equation}\label{th2-3}
                \lim\limits_{\varepsilon\to0}\I{D}A(| f_\varepsilon(x)|)dx \geq A(1)\mathcal{L}^{N}(D)>0.
                \end{equation}
                
                Now to prove $(1)$ assume $\beta=0$. Then LHS of \cref{th2-1}, which is also the numerator in the definition of both $H_{N,s,A}$ and $P^1_{N,s,A}$, converges to $0$. We now use \cref{th2-2,th2-3} to conclude $H_{N,s,A}(D)=0$ and $P^1_{N,s,A}(D)=0$ respectively. This proves (1).
                
                Now we prove (2). Applying L'hospital rule  to \cref{integralcond}, we get $\lim\limits_{\varepsilon\to0}\varepsilon A(\varepsilon^{-s})=\beta$. So there exists $n_0\in\NN$ such that for $n\geq n_0$, $A(n^s)\geq\frac{\beta n}{2}$. Using \cref{th2-2} we have
                \begin{multline*}
                \lim\limits_{ \frac{1}{n}\to 0}\I{D}A\left(
                \frac{|f_{\frac{1}{n}}(x)|}{\delta_x^s}\right)
                dx\geq\I{D}A\left(  \frac{1}{\delta_x^s}\right)
                dx
                \geq \sum_{n=n_0}^{\infty} \I{D_\frac{1}{n}\setminus D_\frac{1}{n+1}}A\left(
                \frac{1}{\delta_x^s}\right)
                dx\\
                \geq \sum_{n=n_0}^{\infty} \I{D_\frac{1}{n}\setminus D_\frac{1}{n+1}}A\left(n^{s}\right)
                dx = 
                \sum_{n=n_0}^{\infty}\frac{\beta n}{2}\mathcal{L}^N\left(D_\frac{1}{n}\setminus D_\frac{1}{n+1}\right) = \sum_{n=n_0}^{\infty}\frac{\beta n}{2}\left(\frac{1}{n}-\frac{1}{n+1}\right) =\infty.
                \end{multline*}
                The proof follows after observing that $\beta > 0$ in \cref{th2-1}.
            \end{proof}
            The conclusions we can draw as an application of \cref{Main theorem,Main theorem not true}, for any bounded Lipschitz domain $ D $ and for any $q>1$ are shown bellow in \cref{table}.
            \begin{table}[H]
                \caption{Conclusions we can draw from \cref{Main theorem,Main theorem not true}}
                \label{table}
                \centering
                \begin{tabular}{|c |r|r|r|r|}
                    \hline\hline
                    $A(t)$ & $H_{N,s,A}(D)>0$ & $H_{N,s,A}(D)=0$ & $P^1_{N,s,A}(D)>0$ & $P^1_{N,s,A}(D)=0$\\ [0.5ex]   \hline\hline          
                    $t^q$  &$s\in(\frac{1}{q},1)$&$s\in(0,\frac{1}{q}]$&$s\in(\frac{1}{q},1)$&$s\in(0,\frac{1}{q})$\\ [1ex]
                    \hline
                    $t^q(1+|\log{t}|)$ &$s\in(\frac{1}{q},1)$&$s\in(0,\frac{1}{q})$&$s\in(\frac{1}{q},1)$&$s\in(0,\frac{1}{q})$\\[1ex]
                    \hline
                    $\frac{t^q}{\log{(e+t)}}$&$s\in(\frac{1}{q},1)$&$s\in(0,\frac{1}{q}]$&$s\in(\frac{1}{q},1)$&$s\in(0,\frac{1}{q}]$\\[1ex]
                    \hline
                    $(1+t)\log{(1+t)}-t$&NA&$s\in(0,1)$&NA&$s\in(0,1)$.\\[1ex] \hline
            \end{tabular}\end{table}
        \bigskip
        
    \section{Proof of \Cref{Main Theorem Point Hardy,Main Theorem Other Hardy}}\label{section other hardy}
    In this section we prove \cref{Main Theorem Point Hardy,Main Theorem Other Hardy}. The notations and conventions followed in this section will be the same as that in \cref{section hardy}. We need \cref{hardy lambda infty,hardy lambda zero} to prove \cref{Main Theorem Point Hardy} and to prove \cref{Main Theorem Other Hardy} we further need \cref{Hardy in cube unbounded}.\smallskip
    
        \begin{lemma}\label{hardy lambda infty}
        Suppose $\liminf\limits_{\lambda\to\infty}\lambda^{\frac{1-N}{s}}\alpha_{s,A}(\lambda)=0$. Then there exists a constant $c=c(s,A,N)>0$ such that for any $r>0$ and any $f\in\test(\RR^N)$,
        $$
        \I{B(0,r)^c}A\left(\frac{|f(x)|}{|x|^s}\right)dx\leq
        c\II{B(0,r)^c\times B(0,r)^c}
        A\left(\frac{|f(x)-f(y)|}{|x-y|^s}\right)\frac{dydx}{|x-y|^N}.
        $$
    \end{lemma}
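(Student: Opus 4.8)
The plan is to reduce to the case $r=1$ by scaling, and then to run the abstract machinery of \cref{property1,property2,property3,iterative process} --- which uses no Lipschitz structure of the ambient set --- on $B(0,1)^c$, with \emph{dyadic annuli} replacing the Lipschitz boxes and the distance to the origin, $|x|$, playing the role of $\delta_x$. Concretely this is the set-up of \cref{section hardy} with the \emph{ambient} set taken to be $D=\RR^N\setminus\lbrace 0\rbrace$, so that $\delta_x=\mathrm{dist}(x,D^c)=|x|$, and with $\Om=\lbrace|x|>1\rbrace\subseteq D$. First I would record the scaling step: given $r>0$ and $f\in\test(\RR^N)$, put $g(x):=r^{-s}f(rx)$; substituting $x=ry$ in the left-hand side and $x=ru,\ y=rv$ in the right-hand side, one checks that both sides of the asserted inequality for $f$ (with ball $B(0,r)$) equal $r^{N}$ times the corresponding side of the same inequality for $g$ (with ball $B(0,1)$), so it suffices to prove the case $r=1$ and the constant obtained will automatically be independent of $r$. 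Moreover all integrals involved only see the values of $f$ on $B(0,1)^c$, so multiplying $f$ by a cutoff that equals $1$ on $\lbrace|x|\ge 3/4\rbrace$ and vanishes on $B(0,1/2)$ changes nothing and lets me assume $f\in\test(\RR^N\setminus\lbrace0\rbrace)$, as required to apply the lemmas with ambient set $D=\RR^N\setminus\lbrace 0\rbrace$.

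Now fix $\theta>1$ and $m\in\NN$ (to be chosen at the very end) and set $A_j:=\lbrace x:\theta^j\le|x|<\theta^{j+1}\rbrace$ for $j\ge 0$; these partition $\Om$ up to a null set, and $f\equiv 0$ on $A_j$ for $j$ large since $f$ has compact support. With $l_1:=2\theta^{m+1}$, $l_2:=\mathcal{L}^N(B(0,1))(\theta^N-1)$ and $G:=G(f,\Om;l_1,l_2)$, the goal is to verify the hypothesis of part (1) of \cref{iterative process}, namely
\[
\I{A_j\cap G}A\!\left(\frac{|f(x)|}{|x|^s}\right)dx\ \le\ \gamma\ \I{A_{j+m}}A\!\left(\frac{|f(x)|}{|x|^s}\right)dx\qquad(j\ge 0)
\]
for a suitable $\gamma\in(0,1)$; once this is done, \cref{iterative process}(1) produces directly $\int_\Om A(|f(x)|/|x|^s)\,dx\le c\iint_{\Om\times\Om}A(|f(x)-f(y)|/|x-y|^s)\,|x-y|^{-N}\,dx\,dy$ with no leftover "core" term.

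To prove the display I would apply \cref{property3} with $E_1:=A_j$ and $E_2:=A_{j+m}$. For $x\in A_j,\ y\in A_{j+m}$ one has $|x-y|\le|x|+|y|<2\theta^{j+m+1}\le l_1\theta^j\le l_1|x|$, hence $E_2\subseteq B(x,l_1|x|)\cap\Om$; and $\mathcal{L}^N(A_{j+m})=\mathcal{L}^N(B(0,1))(\theta^N-1)\theta^{(j+m)N}\ge l_2\,\theta^{(j+1)N}>l_2|x|^N$ for $x\in A_j$, so the hypotheses of \cref{property3} hold. Using $\mathcal{L}^N(A_j)/\mathcal{L}^N(A_{j+m})=\theta^{-mN}$, the identity $\sup\lbrace|x|^s:x\in A_{j+m}\rbrace/\inf\lbrace|x|^s:x\in A_j\rbrace=\theta^{(m+1)s}$, and the defining inequality $A(\mu t)\le\alpha_{s,A}(\mu)\,\mu^{1/s}A(t)$ (see \cref{alpha fnc}) with $\mu=\theta^{(m+1)s}$, \cref{property3} yields the display with
\[
\gamma\ =\ 2^{p+1}\,\theta^{\,m+1-mN}\,\alpha_{s,A}\!\big(\theta^{(m+1)s}\big)\ =\ 2^{p+1}\,\theta^{N}\,\big(\theta^{(m+1)s}\big)^{\frac{1-N}{s}}\,\alpha_{s,A}\!\big(\theta^{(m+1)s}\big).
\]

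It then remains to choose $\theta$ and $m$ so that $\gamma<1$, and this is where the hypothesis enters. By $\liminf_{\lambda\to\infty}\lambda^{\frac{1-N}{s}}\alpha_{s,A}(\lambda)=0$ there is $\lambda_*>1$ with $\lambda_*^{\frac{1-N}{s}}\alpha_{s,A}(\lambda_*)<2^{-(p+2)}$; since $\lambda_*^{\,N/((m+1)s)}\to 1$ as $m\to\infty$, I choose $m\in\NN$ with $\lambda_*^{\,N/((m+1)s)}<2$ and set $\theta:=\lambda_*^{\,1/((m+1)s)}>1$, so that $\theta^{(m+1)s}=\lambda_*$, $\theta^N<2$, and therefore $\gamma<2^{p+1}\cdot 2\cdot 2^{-(p+2)}=1$ (and $\gamma>0$ trivially). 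With $\theta,m$ --- hence $l_1,l_2,\gamma$, and the resulting constant $c=2^{p+1}l_1^{N+sp}/(l_2(1-\gamma))$ --- depending only on $s,A,N$, \cref{iterative process}(1) gives the $r=1$ inequality (recall $\Om=B(0,1)^c$ up to a null set), and the scaling reduction finishes the proof. I expect the genuinely delicate point to be exactly this last parameter choice: $\gamma$ carries a factor $\theta^{N}$ that grows with the annulus ratio, so one cannot simply send $\theta^{(m+1)s}$ to a favourable value of $\lambda$ while keeping $\theta$ bounded; the remedy is to decouple the two choices, \emph{first} fixing a good $\lambda_*$ and \emph{then} taking the skip $m$ large enough that $\theta=\lambda_*^{1/((m+1)s)}$ stays close to $1$.
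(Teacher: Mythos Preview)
Your argument is correct and follows the paper's own route: annular decomposition of $B(0,r)^c$, \cref{property3} applied with $E_1=A_j$ and $E_2=A_{j+m}$, then \cref{iterative process}(1). The paper works directly with general $r$ and fixes the annular ratio at $\theta=2$, simply asserting that the hypothesis allows a choice of large $m$ making $\gamma<\tfrac12$. Your one substantive tweak is the variable ratio $\theta$: by first selecting a favourable $\lambda_*$ from the $\liminf$ and then setting $\theta=\lambda_*^{1/((m+1)s)}$ with $m$ large, you hit $\lambda_*$ exactly and keep $\theta^N$ bounded, which cleanly sidesteps the question (glossed over in the paper, though recoverable via the submultiplicativity $\alpha_{s,A}(\lambda\mu)\le\alpha_{s,A}(\lambda)\alpha_{s,A}(\mu)$) of whether the $\liminf$ is nearly realised along the specific sequence $\lambda=2^{(m+1)s}$. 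The preliminary scaling to $r=1$ is cosmetic.
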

\begin{proof}
    We can assume $f\in\test(\RR^N\setminus \lbrace0\rbrace)$, because for the sake of proving the above inequality the value of $f$ inside $B(0,r)$ does not matter. Let $D:=\RR^N\setminus \lbrace0\rbrace$ so that $\delta_x=|x|$, $\Om:=B(0,r)^c$, $l_1=2^{m+1}$, $l_2=\mathcal{L}^N(B(0,1))2^{mN}(2^N-1)$, $G:=G(f,\Om;l_1,l_2)$ and $A_k=B(0,2^{k+1}r)\setminus B(0,2^{k}r)$ for $k\in\NN\cup\lbrace0\rbrace$. For a fixed $j\in\NN\cup\lbrace 0 \rbrace$, set $E_1=A_j$ and $E_2=A_{j+m}$. Then for $x\in E_1$, $2^{j}r\leq \delta_x\leq 2^{j+1}r$, which gives $\mbox{dist}(x,A_{j+m})\leq 2^{j+m+1}r \leq l_1\delta_x$. Again $\mathcal{L}^N(A_k)=\mathcal{L}^N(B(0,1))2^{kN}r^N(2^N-1)$. So $E_1,E_2$ satisfy the hypotheses of \cref{property3}. We can then conclude
    \begin{multline*}
    \I{A_j\cap G}A\left( \frac{|f(x)|}{\delta_x^s}\right)
    dx\leq\frac{2^{p+1}\mathcal{L}^{N}(A_j)}{\mathcal{L}^{N}(A_{j+m})}\I{A_{j+m}}A\left(\frac{\sup\lbrace \delta_x^s \ | \ x\in A_{j+m}\rbrace \; |f(y)|}{\inf\lbrace \delta_x^s \ | \ x\in A_{j}\rbrace \;\delta_y^s}\right)dy\\
    =2^{p+1-mN}\I{A_{j+m}}A\left(\frac{(2^{j+m+1}r)^s \; |f(y)|}{(2^{j}r)^s \;\delta_y^s}\right)dy
    =2^{p+1-mN}\I{A_{j+m}}A\left(\frac{2^{(m+1)s} \; |f(y)|}{\delta_y^s}\right)dy\\
    =2^{p+1+N}\I{A_{j+m}}\frac{A\left(\frac{2^{(m+1)s} \; |f(y)|}{\delta_y^s}\right)}{2^{(m+1)N}A\left(\frac{|f(y)|}{\delta_y^s}\right)}A\left(\frac{|f(y)|}{\delta_y^s}\right)dy\\
    =2^{p+1+N}2^{(m+1)(1-N)}\alpha_{s,A}(2^{(m+1)s})\I{A_{j+m}}A\left(\frac{|f(y)|}{\delta_y^s}\right)dy.
    \end{multline*}
    Now we use the hypothesis and choose large enough $m=m(s,A,N)\in\NN$, so that
    $$
    \I{A_j\cap G}A\left( \frac{|f(x)|}{\delta_x^s}\right)
    dx\leq\frac{1}{2}\I{A_{j+m}}A\left(\frac{|f(y)|}{\delta_y^s}\right)dy.
    $$
    Now we can apply \cref{iterative process} to complete the proof.
    \end{proof}

\begin{lemma}\label{hardy lambda zero}
Suppose $\liminf\limits_{\lambda\to0+}\lambda^{\frac{1-N}{s}}\alpha_{s,A}(\lambda)=0$. Then there exists a constant $c=c(s,A,N)>0$ and $m=m(s,A,N)\in\NN$ such that for any $r>0$ and any $f\in\test(\RR^N)$,
\begin{multline*}
\I{B(0,2^mr)^c}A\left(\frac{|f(x)|}{\delta_x^s}\right)dx\\
\leq
c\left[\ \II{B(0,2^mr)^c\times B(0,2^mr)^c}
A\left(\frac{|f(x)-f(y)|}{|x-y|^s}\right)\frac{dydx}{|x-y|^N}\ +\I{B(0,2^mr)\setminus B(0,r)}A\left(\frac{|f(x)|}{\delta_x^s}\right)dx\right].
\end{multline*}
\end{lemma}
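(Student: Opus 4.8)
The plan is to run the ``$\lambda\to 0^+$'' analogue of \cref{hardy lambda infty}: on the same family of dyadic annuli one now pushes the mass of $A(|f|/\delta^s)$ \emph{inward}, toward the origin, rather than outward. Since an inward sweep must terminate at the innermost annuli, this forces the use of part~(2) of \cref{iterative process} (the version carrying an extra boundary term) in place of part~(1), and that extra term is precisely the shell integral over $B(0,2^m r)\setminus B(0,r)$ appearing on the right of the statement. For the setup, observe that all four terms in the conclusion are unaffected by modifying $f$ inside $B(0,r)$, so we may assume $0\notin\operatorname{supp} f$; put $D:=\RR^N\setminus\{0\}$ (so that $\delta_x=|x|$), $\Om:=B(0,r)^c$, and decompose $\Om=\bigcup_{k\ge 0}A_k$ with $A_k:=B(0,2^{k+1}r)\setminus B(0,2^k r)$. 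Because $f\in\test(\RR^N\setminus\{0\})$, only finitely many $A_k$ meet $\operatorname{supp} f$, so the iteration in \cref{iterative process} terminates.

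The engine is a one-sided block comparison with a large shift $m$. Fix $m\in\NN$ (to be chosen), set $l_1:=4$ and $l_2:=\mathcal L^N(B(0,1))(2^N-1)2^{-(m+1)N}$, and let $G:=G(f,\Om;l_1,l_2)$ as in \cref{ G set}. For $j\ge m$ apply \cref{property3} with $E_1:=A_j$ and $E_2:=A_{j-m}$: one checks, uniformly in $j$ and $r$, that $E_2\subseteq B(x,l_1\delta_x)$ for $x\in E_1$ (indeed $|x-y|\le|x|+|y|\le 2|x|<l_1\delta_x$ whenever $x\in A_j$, $y\in A_{j-m}$ and $m\ge 1$) and that $\mathcal L^N(E_2)\ge l_2\,\delta_x^N$ for $x\in E_1$. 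Since $\sup\{\delta_x^s:x\in E_2\}/\inf\{\delta_x^s:x\in E_1\}=2^{(1-m)s}$ while $\mathcal L^N(E_1)/\mathcal L^N(E_2)=2^{mN}$, \cref{property3} followed by the same computation as in the proof of \cref{hardy lambda infty} (multiply and divide inside $A$ by $2^{(1-m)N}A(|f(y)|/\delta_y^s)$ and recognise $\alpha_{s,A}$ from \cref{alpha fnc}) gives
\[
\I{A_j\cap G}A\Bigl(\frac{|f(x)|}{\delta_x^s}\Bigr)dx\ \le\ 2^{p+1+N}\,\bigl(2^{(1-m)s}\bigr)^{\frac{1-N}{s}}\alpha_{s,A}\bigl(2^{(1-m)s}\bigr)\I{A_{j-m}}A\Bigl(\frac{|f(y)|}{\delta_y^s}\Bigr)dy .
\]
Using $\liminf_{\lambda\to 0^+}\lambda^{(1-N)/s}\alpha_{s,A}(\lambda)=0$ one then picks $m=m(s,A,N)$ so large that this coefficient is $<\tfrac12$. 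A detail to mind is that the hypothesis is a $\liminf$ over \emph{all} $\lambda\to0^+$, whereas we need it along the dyadic sequence $\lambda=2^{(1-m)s}$; this is supplied by the submultiplicativity $\alpha_{s,A}(\lambda_1\lambda_2)\le\alpha_{s,A}(\lambda_1)\alpha_{s,A}(\lambda_2)$ together with boundedness of $\alpha_{s,A}$ on compact subintervals of $(0,\infty)$ (from \cref{A-property1} and \cref{lemma delta2}).

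Taking $\gamma=\tfrac12$, the hypotheses of part~(2) of \cref{iterative process} now hold for this $\Om$, decomposition and $m$, and its conclusion yields $c=c(s,A,N)>0$ with
\[
\I{B(0,r)^c}A\Bigl(\frac{|f(x)|}{\delta_x^s}\Bigr)dx\ \le\ c\II{B(0,r)^c\times B(0,r)^c}A\Bigl(\frac{|f(x)-f(y)|}{|x-y|^s}\Bigr)\frac{dxdy}{|x-y|^N}\ +\ c\I{B(0,2^m r)\setminus B(0,r)}A\Bigl(\frac{|f(x)|}{\delta_x^s}\Bigr)dx ,
\]
since $A_0\cup\cdots\cup A_{m-1}=B(0,2^m r)\setminus B(0,r)$. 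Because $B(0,2^m r)^c\subseteq B(0,r)^c$, discarding part of the left-hand region already gives the left side of the lemma; it remains to trim the Gagliardo term from $B(0,r)^c\times B(0,r)^c$ to $B(0,2^m r)^c\times B(0,2^m r)^c$. For this one writes $B(0,r)^c=B(0,2^m r)^c\sqcup S$ with $S:=B(0,2^m r)\setminus B(0,r)$, splits the double integral accordingly, and absorbs the two pieces meeting $S$ into the single-integral $S$-term on the right: on $S$ one has $\delta_x=|x|\asymp r$, and since $f$ is smooth with compact support, $A(|f(x)-f(y)|/|x-y|^s)\le A(\operatorname{Lip}(f)\,|x-y|^{1-s})$ keeps the near-interface part of the kernel $|x-y|^{-N}$ integrable; enlarging $S$ by a fixed bounded factor in the statement if necessary (harmless, as $r>0$ is free and $m$ fixed) finishes the proof.

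I expect the real obstacle to be twofold. First is the contraction constant: the measure ratio $\mathcal L^N(A_j)/\mathcal L^N(A_{j-m})=2^{mN}$ \emph{grows} with the shift $m$ (for $N\ge 2$), so it must be beaten by the smallness of $\alpha_{s,A}$ at the small scale $2^{(1-m)s}$, and this is exactly, and only, what $\liminf_{\lambda\to0^+}\lambda^{(1-N)/s}\alpha_{s,A}(\lambda)=0$ provides; securing an honest coefficient $<1$ (which is why a $\liminf$, not a limit, is enough) is the crux. Second is the domain bookkeeping of the last paragraph: the iterative machinery naturally outputs the Gagliardo energy on $B(0,r)^c\times B(0,r)^c$, and cutting it down to $B(0,2^m r)^c\times B(0,2^m r)^c$ at the cost of only the shell term is the remaining technical point.
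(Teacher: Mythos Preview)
Your argument through the application of part~(2) of \cref{iterative process} is essentially the paper's proof: same dyadic annuli $A_k=B(0,2^{k+1}r)\setminus B(0,2^kr)$ in $\Om=B(0,r)^c$, same choice $E_1=A_j$, $E_2=A_{j-m}$ in \cref{property3}, same contraction factor $2^{p+1+N}\,2^{(1-m)(1-N)}\alpha_{s,A}(2^{(1-m)s})$, same choice of $m$ from the hypothesis, and the same appeal to \cref{iterative process}(2). Your $l_1=4$ versus the paper's $l_1=2$ is cosmetic, and your remark about passing from the $\liminf$ over all $\lambda\to0^+$ to the dyadic subsequence via submultiplicativity of $\alpha_{s,A}$ is a welcome detail that the paper leaves implicit.

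The real problem is your final ``trimming'' paragraph. You want to pass from the Gagliardo term over $B(0,r)^c\times B(0,r)^c$ (which is what \cref{iterative process}(2) actually delivers with $\Om=B(0,r)^c$) to one over $B(0,2^mr)^c\times B(0,2^mr)^c$, absorbing the cross pieces $\iint_{S\times B(0,r)^c}$ into the shell term $\int_S A(|f|/\delta^s)$. This cannot work with an $f$-independent constant: your own bound injects $\mbox{Lip}(f)$ into the estimate, and more fundamentally the cross piece depends on the values of $f$ throughout $B(0,2^mr)^c$, which are in no way controlled by $\int_S A(|f|/\delta^s)$ (take $f$ tiny on $S$ but large and oscillatory far out). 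The suggestion to ``enlarge $S$ by a fixed factor, since $r$ is free'' is also off: $r$ is universally quantified in the statement, not yours to choose.

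Note that the paper's own proof does \emph{not} carry out this trimming either; it simply stops after invoking \cref{iterative process}, whose output carries the double integral over $B(0,r)^c\times B(0,r)^c$. For every place the lemma is used (the proofs of \cref{Main Theorem Point Hardy} and \cref{Main Theorem Other Hardy}) that version already suffices, since the resulting double integral is afterwards only enlarged. So the right fix is to drop your last paragraph and record the conclusion with $B(0,r)^c$ in the Gagliardo term, which is what the argument actually proves.
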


\begin{proof}
    From the hypothesis on $s$ and $A$, there exists $m\in\NN$ such that
    $$
    2^{p+1+N}2^{(1-m)(1-N)}\alpha_{s,A}\left(2^{(1-m)s}\right)<\frac{1}{2}.  
    $$
    We fix this $m$. Let $D=\test(\RR^N\setminus\lbrace 0 \rbrace)$, $\Om=B(0,r)^c$, $l_1=2$, $l_2=\mathcal{L}^N(B(0,1))2^{-mN}(2^N-1)$, $G=G(f,\Om;l_1,l_2)$ and $A_k=B(0,2^{k+1}r)\setminus B(0,2^{k}r)$, then $\mathcal{L}^N(A_k)=\mathcal{L}^N(B(0,1))2^{kN}r^N(2^N-1)$. As in the previous lemma, we assume $f\in\test(D)$. For $j\geq m$, set $E_1=A_j$ and $E_2=A_{j-m}$. So for $x\in A_j$, $\mbox{dist}(x,A_{j-m})\leq 2^{j+1}r\leq l_1\delta_x$. Also $\frac{\mathcal{L}^N(A_{j-m})}{\delta_x^N}\geq\mathcal{L}^N(B(0,1))2^{-mN}(2^N-1)=l_2$. Hence, for any $j\geq m$, we can apply \cref{property3} to get 
    \begin{multline*}
        \I{A_j\cap G}A\left( \frac{|f(x)|}{|x|^s}\right)
        dx
        \leq 2^{p+1+mN}\I{A_{j-m}}A\left(\frac{2^{(1-m)s} \; |f(y)|}{|y|^s}\right)dy\\
        = 2^{p+1+N}\I{A_{j-m}}\frac{A\left(\frac{2^{(1-m)s} \; |f(y)|}{|y|^s}\right)dy}{2^{(1-m)N}A\left(\frac{|f(y)|}{|y|^s}\right)dy}A\left(\frac{|f(y)|}{|y|^s}\right)dy\\
        =2^{p+1+N}2^{(1-m)(1-N)}\alpha_{s,A}\left(2^{(1-m)s}\right)\I{A_{j-m}}A\left(\frac{|f(y)|}{|y|^s}\right)dy
        <\frac{1}{2}\I{A_{j-m}}A\left(\frac{|f(y)|}{|y|^s}\right)dy.
    \end{multline*}
    Finally we apply \cref{iterative process} to complete the proof.
\end{proof}\smallskip  

 \begin{proof}[\textbf{Proof of \Cref{Main Theorem Point Hardy}}]
     
     Set $D=\RR^N\setminus \lbrace 0 \rbrace$, take $f\in\test(D)$. Then $f\equiv0$ near $0$. First let us assume  $\liminf\limits_{\lambda\to\infty}\lambda^{\frac{1-N}{s}}\alpha_{s,A}(\lambda)=0$. Choose $r>0$ small enough, such that, $\mbox{supp}(f)\subseteq B(0,r)^c$. Then \cref{hardy lambda infty} implies $H_{N,s,A}(D)>0$.
     
     If, however, $\liminf\limits_{\lambda\to0+}\lambda^{\frac{1-N}{s}}\alpha_{s,A}(\lambda)=0$, we choose $r>0$ so small that $\mbox{supp}(f)\subseteq B(0,2^mr)^c$. Proceed exactly same as in the previous case and apply \cref{hardy lambda zero}. The restriction on $\mbox{supp}(f)$ ensures that the last term in the inequality is zero. Hence $H_{N,s,A}(\RR^N\setminus \lbrace 0 \rbrace)>0$.
 \end{proof}
We shall the notations as in the geometric decomposition of $D$, done after \cref{property3}, for \cref{Hardy in cube unbounded}. 
\begin{lemma}\label{Hardy in cube unbounded}
    For a fixed $z\in\partial D$, define $Q_k:=\cup_{\ell=k}^{\infty}A_\ell$, where $A_\ell$ are as in \cref{Ak set}. Let $\liminf\limits_{\lambda\to \infty}\alpha_{s,A}(\lambda)=0$. Then there exists $m=m\left(\mbox{Lip}(D),s,A\right)$ and $c=c\left(\mbox{Lip}(D),s,A\right)$ such that for any $f\in\test(D)$,
    \begin{equation*}
    \I{Q_m}A\left(\frac{|f(x)|}{\delta_x^s}\right)dx
    \leq
    c\left[\ \II{Q_m\times Q_m}
    A\left(\frac{|f(x)-f(y)|}{|x-y|^s}\right)\frac{dydx}{|x-y|^N}\ +\I{Q_z(\rho)\setminus Q_m}A\left(\frac{|f(x)|}{\delta_x^s}\right)dx\right].
    \end{equation*}
\end{lemma}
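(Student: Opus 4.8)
The plan is to imitate the proof of \cref{Hardy in cube}, but to run the iteration ``outwards'', comparing a shell to a \emph{shallower} one (larger $\delta_x$, smaller dyadic index), since the smallness of $\alpha_{s,A}$ is now available as $\lambda\to\infty$ rather than as $\lambda\to 0$. I keep the geometric decomposition of the Lipschitz box recalled before \cref{Hardy in cube}: the shells $A_\ell$ of \cref{Ak set}, the dyadic cubes $Q^i_j$, and the estimates \cref{dist est1,dist est2,measure,dist btwn pts}. First I record that $\alpha_{s,A}$ is submultiplicative, $\alpha_{s,A}(\lambda_1\lambda_2)\le\alpha_{s,A}(\lambda_1)\alpha_{s,A}(\lambda_2)$, and that \cref{lemma delta2} gives $\alpha_{s,A}(\lambda)\le\lambda^{p-1/s}$ for $\lambda\ge 1$; hence $\alpha_{s,A}$ is bounded on compact subsets of $[1,\infty)$, so $\liminf_{\lambda\to\infty}\alpha_{s,A}(\lambda)=0$ already forces $\lim_{\lambda\to\infty}\alpha_{s,A}(\lambda)=0$. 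Using this I fix $m\in\NN$ so large that
$$2^{p+2}\big(1+\mbox{Lip}(D)\big)\,\alpha_{s,A}\!\Big(\big(2^{m+1}(1+\mbox{Lip}(D))\big)^s\Big)<\tfrac12 ,$$
and set $l_1:=c(N,\mbox{Lip}(D))\,2^{m+1}(1+\mbox{Lip}(D))$, $l_2:=2^{m-1}$.

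Fix $f\in\test(D)$, and for each $\ell$ with $A_{\ell-m}$ admissible and each $i$, apply \cref{property3} with $E_1:=A_\ell\cap Q^i_\ell$ and $E_2:=A_{\ell-m}\cap Q^i_\ell$. Since $E_1,E_2$ sit above the same $(N-1)$-cube $K^i_\ell$, \cref{dist est1} gives $\delta_x\le\rho/2^\ell$ on $E_1$ and $\rho/\big(2^{\ell-m+1}(1+\mbox{Lip}(D))\big)\le\delta_y\le\rho/2^{\ell-m}$ on $E_2$; a slicing computation as in \cref{measure} gives $\mathcal{L}^N(E_1)/\mathcal{L}^N(E_2)=2^{-m}$ together with $\mathcal{L}^N(E_2)\ge l_2\delta_x^N$; and (a relabelled version of) \cref{dist btwn pts} gives $E_2\subseteq B(x,l_1\delta_x)$ for $x\in E_1$. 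The decisive feature is that the dilation factor entering $A$ in \cref{property3} is now \emph{large},
$$\frac{\sup\{\delta_y^s:y\in E_2\}}{\inf\{\delta_x^s:x\in E_1\}}\le\big(2^{m+1}(1+\mbox{Lip}(D))\big)^s ,$$
so invoking the definition of $\alpha_{s,A}$ and the choice of $m$ and summing over $i$ yields, for every admissible $\ell$,
$$\I{A_\ell\cap G}A\!\left(\frac{|f(x)|}{\delta_x^s}\right)dx\ \le\ \frac12\,\I{A_{\ell-m}}A\!\left(\frac{|f(x)|}{\delta_x^s}\right)dx ,$$
where $G$ denotes the relevant set $G(f,\cdot\,;l_1,l_2)$.

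This display is precisely the hypothesis of part (2) of \cref{iterative process} with shift $m$ (the iteration terminating since $f\in\test(D)$ vanishes on $A_\ell$ for all large $\ell$). Applied with $\Om=Q_z(\rho)=\bigcup_{j\ge 0}A_j$ and $G=G(f,Q_z(\rho);l_1,l_2)$ it gives
$$\I{Q_z(\rho)}A\!\left(\frac{|f(x)|}{\delta_x^s}\right)dx\ \le\ c\II{Q_z(\rho)\times Q_z(\rho)}A\!\left(\frac{|f(x)-f(y)|}{|x-y|^s}\right)\frac{dy\,dx}{|x-y|^N}\ +\ c\I{A_0\cup\cdots\cup A_{m-1}}A\!\left(\frac{|f(x)|}{\delta_x^s}\right)dx ,$$
and since $A_0\cup\cdots\cup A_{m-1}=Q_z(\rho)\setminus Q_m$ and $Q_m\subseteq Q_z(\rho)$, this already yields the assertion with the Gagliardo integral over $Q_z(\rho)\times Q_z(\rho)$. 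To localize it to $\II{Q_m\times Q_m}$, as in the statement, one runs the same scheme instead with $\Om=Q_m$: for $x$ deep in $Q_m$ the ball $B(x,l_1\delta_x)$ stays inside $Q_m$, so the use of \cref{property1} inside \cref{iterative process} only produces interactions within $Q_m\times Q_m$, while the finitely many shells discarded by the iteration lie where $\delta_x$ is bounded below and feed into the last term. The step I expect to be routine is the verification of the distortion estimates in the second paragraph --- essentially identical to those in \cref{Hardy in cube}, the only change being that a shell is compared to a shallower one. The main obstacle is precisely the bookkeeping just described: arranging the scheme so that the surviving Gagliardo interactions remain inside $Q_m\times Q_m$ while the discarded mass is confined to the region where $\delta_x$ is bounded below, so that (using $A\in\D_2$) it can be absorbed into $c\,\I{Q_z(\rho)\setminus Q_m}A(|f(x)|/\delta_x^s)\,dx$ rather than re-entering the recursion.
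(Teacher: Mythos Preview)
Your approach is essentially identical to the paper's: the same choice of $m$ (your inequality $2^{p+2}(1+\mathrm{Lip}(D))\,\alpha_{s,A}\big((2^{m+1}(1+\mathrm{Lip}(D)))^s\big)<\tfrac12$ is exactly the paper's condition, rewritten), the same $l_2=2^{m-1}$, the same comparison of $E_1=A_j\cap Q^i_j$ against $E_2=A_{j-m}\cap Q^i_j$ via \cref{property3}, and the same appeal to part~(2) of \cref{iterative process}. Your $l_1$ carries an explicit factor $2^{m}$ that the paper's stated $l_1$ does not; you are right that the inclusion $E_2\subseteq B(x,l_1\delta_x)$ actually needs this (for $x\in A_j$ one has $\delta_x\gtrsim\rho/2^{j+1}$ while $|x-y|\lesssim\rho/2^{j-m}$), so this is a small correction rather than a deviation.

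You also go beyond the paper by noting that a direct application of \cref{iterative process} with $\Omega=Q_z(\rho)$ yields the Gagliardo term over $Q_z(\rho)\times Q_z(\rho)$ rather than over $Q_m\times Q_m$ as written in the statement. The paper's proof ends with ``Applying \cref{iterative process} the lemma follows'' and does not address this localization either. Your sketch of a fix (re-running the scheme with $\Omega=Q_m$) is on the right track but, as you acknowledge, the bookkeeping is not carried out: for $m\le j<2m$ the comparison set $E_2=A_{j-m}$ lies outside $Q_m$, so the remainder produced by \cref{iterative process} would be $\int_{A_m\cup\cdots\cup A_{2m-1}}$, not $\int_{A_0\cup\cdots\cup A_{m-1}}$, and pushing that back to $Q_z(\rho)\setminus Q_m$ costs one more step that again brings in interactions outside $Q_m$. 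In any case, in every application of the lemma in \cref{Main Theorem Other Hardy} the Gagliardo term is immediately enlarged to the full domain, so the weaker form with $Q_z(\rho)\times Q_z(\rho)$ is all that is actually used.
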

\begin{proof}
   From the hypothesis on $s$ and $A$, there exists $m\in\NN$ such that
   $$
   2^{2+p}(1+\mbox{Lip}(D))\alpha_{s,A}(2^{s+ms}(1+\mbox{Lip}(D))^{s})<\frac{1}{2}.  
   $$
   We fix this $m$. Let $l_1=2(1+\mbox{Lip}(D))\sqrt{N-1+(\mbox{Lip}(D)\sqrt{N-1}+2)^2}$, $l_2=2^{m-1}$ and  $G=G(f,Q_z(\rho);l_1,l_2)$. For $j\geq m$, set $E_1=A_j\cap Q^i_j$ and $E_2=A_{j-m}\cap Q^i_j$. \Cref{dist est2,measure} imply that we can apply \cref{property3} on $E_1,E_2$. Thus we get, utilizing the choice of $m$,
   \begin{multline*}
   \I{(A_j\cap Q^i_j)\cap G} A\left(\frac{|f(x)|}{\delta_x^s}\right)
   dx
   \leq 2^{1+p-m}\I{A_{j-m}\cap
       Q^i_j}A\left(2^{s+ms}(1+\mbox{Lip}(D))^{s}\frac{|f(y)|}{\delta_y^s}\right)dy\\
   =2^{2+p}(1+\mbox{Lip}(D))\I{A_{j-m}\cap
       Q^i_j}\frac{A\left(2^{s+ms}(1+\mbox{Lip}(D))^{s}\frac{|f(y)|}{\delta_y^s}\right)}{2^{1+m}(1+\mbox{Lip}(D))A\left(\frac{|f(y)|}{\delta_y^s}\right)}A\left(\frac{|f(y)|}{\delta_y^s}\right)dy\\
   \leq 2^{2+p}(1+\mbox{Lip}(D))\alpha_{s,A}(2^{s+ms}(1+\mbox{Lip}(D))^{s})\I{A_{j-m}\cap
       Q^i_j}A\left(\frac{|f(y)|}{\delta_y^s}\right)dy
   <\frac{1}{2}\I{A_{j-m}\cap Q^i_j}A\left(\frac{|f(y)|}{\delta_y^s}\right)dy.
   \end{multline*}
   Summing over $1\leq i \leq 2^{j(N-1)}$, we get
   $$
  \I{A_j\cap G} A\left(\frac{|f(x)|}{\delta_x^s}\right)
  dx
   \leq \frac{1}{2}\I{A_{j-m}}A\left(\frac{|f(y)|}{\delta_y^s}\right)dy.
   $$
   Applying \cref{iterative process} the lemma follows.
\end{proof}
\smallskip

\begin{proof}[\textbf{Proof of \Cref{Main Theorem Other Hardy}}]
    (1) Without loss of generality, we can assume $D=\RR^{N-1}\times\lbrace 0 \rbrace$. Let $z\in D$. First assume the case $\liminf\limits_{\lambda\to 0}\alpha_{s,A}(\lambda)=0$. Take $f\in\test(D)$. Note that for the choice of $D$ in this case, we are free to choose $\rho$ as large as we want and we still have $Q_z(\rho)\subset D$. Choose $\rho>0$ so that $\mbox{supp}(f)\subseteq Q_z(\rho)$. Then applying \cref{Hardy in cube} we get 
    \begin{multline*}
    \I{D} A\left(\frac{|f(x)|}{\delta_x^s}\right)dx
    =\I{Q_z(\rho)} A\left(\frac{|f(x)|}{\delta_x^s}\right)dx
    \leq c\II{Q_z(\rho)\times Q_z(\rho)}
    A\left(\frac{|f(x)-f(y)|}{|x-y|^s}\right)\frac{dxdy}{|x-y|^N}\\
    \leq c\II{D\times D}
    A\left(\frac{|f(x)-f(y)|}{|x-y|^s}\right)\frac{dxdy}{|x-y|^N}.
    \end{multline*}
    Now assume $\liminf\limits_{\lambda\to \infty}\alpha_{s,A}(\lambda)=0$. \Cref{Hardy in cube unbounded} implies 
    $$
    \I{0<x_N<\frac{\rho}{2^m}} A\left(\frac{|f(x)|}{\delta_x^s}\right)dx
    \leq c\II{0<x_N,y_N<\frac{\rho}{2^m}}
    A\left(\frac{|f(x)-f(y)|}{|x-y|^s}\right)\frac{dxdy}{|x-y|^N}
    + \I{x_N>\frac{\rho}{2^m}} A\left(\frac{|f(x)|}{\delta_x^s}\right)dx,
    $$ where $m,\rho$ is the integer considered in \cref{Hardy in cube unbounded}. Note that $m$ does not depend on $\rho$. So we choose $\rho$ big enough, so that $\mbox{supp}(f)\subseteq \lbrace x\in \RR^N \ | \ x_N<\frac{\rho}{2^m}\rbrace$ forcing the remainder term in the above equation to vanish.
    \smallskip
    
    (2) Let $f\in\test(D)$. We choose $R>0$ large enough so that $D^c\subseteq B(0,R)$. Now $D\cap B(0,R)$ is a bounded Lipschitz domain. A careful observation of the proof of \cref{Main theorem} reveals that, under the assumption $\liminf\limits_{\lambda\to0+}\alpha_{s,A}(\lambda)=0$, we can use the same technique to get a constant $c=c(A,s,N,D)$ such that 
    \begin{equation}\label{th4-2}
    \I{D\cap B(0,R)}A\left(\frac{|f(x)|}{\delta_x^s}\right)dx\leq c \II{D\cap B(0,R) \times D\cap B(0,R)}A\left(\frac{|f(x)-f(y)|}{|x-y|^s}\right)\frac{dxdy}{|x-y|^N},
    \end{equation}
    even if $f\notin\test(D\cap B(0,R))$.\\
    
    First we assume $\liminf\limits_{\lambda\to\infty}\lambda^{\frac{1-N}{s}}\alpha_{s,A}(\lambda)=\liminf\limits_{\lambda\to0+}\alpha_{s,A}(\lambda)=0$.\\
        We apply \cref{hardy lambda infty} to get, for any $r=R$,
        \begin{equation}\label{th4-3}
        \I{B(0,R)^c}A\left(\frac{|f(x)|}{\delta_x^s}\right)dx\leq
        c\II{B(0,R)^c\times B(0,R)^c}
        A\left(\frac{|f(x)-f(y)|}{|x-y|^s}\right)\frac{dydx}{|x-y|^N}.
        \end{equation}
        
        Adding \cref{th4-2,th4-3}, the claim follows.\\

         Now we assume $\liminf\limits_{\lambda\to0+}\lambda^{\frac{1-N}{s}}\alpha_{s,A}(\lambda)=0$. Note that in this case $\liminf\limits_{\lambda\to0+}\alpha_{s,A}(\lambda)=0$ is implied. Consider the extension of $f$ by $0$ to whole of $\RR^N$.
        Applying \cref{hardy lambda zero} with $r=\frac{R}{2^m}$ and then proceeding similarly as above, the claim follows.\\ 
        
    Finally, assume $\liminf\limits_{\lambda\to \infty}\alpha_{s,A}(\lambda)=0$. Denote $\forall\ \eta>0$, $D_\eta:=\lbrace x\in D \ | \ \mbox{dist}(x,\partial D)<\eta \rbrace$. An application of \cref{Hardy in cube unbounded}, as \cref{Hardy in cube} was used in the proof of \cref{Main theorem}, gives
    \begin{multline*}\label{th4-1}
    \I{D_\varepsilon}A\left(\frac{|f(x)|}{\delta_x^s}\right)dx
    \leq
    c\left[\ \II{D_\varepsilon\times D_\varepsilon}
    A\left(\frac{|f(x)-f(y)|}{|x-y|^s}\right)\frac{dydx}{|x-y|^N}\ +\I{D_{2\varepsilon}\setminus D_\varepsilon}A\left(\frac{|f(x)|}{\delta_x^s}\right)dx\right]\\
    \leq
    c\left[\ \II{D_\varepsilon\times D_\varepsilon}
    A\left(\frac{|f(x)-f(y)|}{|x-y|^s}\right)\frac{dydx}{|x-y|^N}\ +\I{D\setminus D_\varepsilon}A\left(\frac{|f(x)|}{\delta_x^s}\right)dx\right],
    \end{multline*}
   for some fixed $\varepsilon>0$ and a constant $c=c(s,N,A,D)>0$ and or any $f\in\test(D)$.
   Now set $\Om:=D\setminus D_\varepsilon$ and let $R>0$ be such that $\Om^c\subseteq B(0,R)$. Before proceeding further note that if, for some constant $c=c(s,N,A,D)>0$ and for all $f\in\test(D)$, we can show
   $$
   \I{\Om}A\left(\frac{|f(x)|}{\delta_x^s}\right)dx
   \leq
   c\ \II{\Om\times \Om}
   A\left(\frac{|f(x)-f(y)|}{|x-y|^s}\right)\frac{dydx}{|x-y|^N},
   $$
   then the proof will be complete. Set $A_j:=B(0,2^{j}R)\setminus B(0,2^{j-1}R)$ and $A_0:=B(0,R)\cap \Om$. Then for $k\in\NN$, $\mathcal{L}^N(A_k)=\mathcal{L}^N(B(0,1))R^N2^{(k-1)N}(2^N-1)$ and for any $x\in A_k$, $k>1$, $(2^{k-1}-1)R\leq\delta_x\leq 2^{k}R$. Whereas $\mathcal{L}^N(A_0)=\mathcal{L}^N(B(0,R))-\mathcal{L}^N(\Om^c)$, for $x\in A_0\cup A_1$, $\varepsilon\leq\delta_x\leq R$ and for $x\in A_1$, $\varepsilon\leq \delta_x\leq 2R$. Now let $x\in A_j,\ y\in A_{j+m}$. Then for any $j\geq0$, $\mbox{dist}(x,y)\leq2^{j+m}R=2^m\frac{2^{j}}{2^{j-1}-1}$. So it is clear that there exists $l_1,l_2>0$ such that the hypotheses of \cref{property3} are satisfied. So, we get
\begin{multline*}
   \I{A_j\cap G}A\left( \frac{|f(x)|}{\delta_x^s}\right)
   dx
   \leq\frac{2^{p+1}\mathcal{L}^{N}(A_j)}{\mathcal{L}^{N}(A_{j+m})}\I{A_{j+m}}A\left(\frac{\sup\lbrace \delta_x^s \ | \ x\in A_{j+m}\rbrace \; |f(y)|}{\inf\lbrace \delta_x^s \ | \ x\in A_{j}\rbrace \;\delta_y^s}\right)dy\\
   \leq c(R,\mathcal{L}^N(\Om^c),N)\ 2^{-mN}\I{A_{j+m}}A\left(\frac{\sup\lbrace \delta_x^s \ | \ x\in A_{j+m}\rbrace \; |f(y)|}{\inf\lbrace \delta_x^s \ | \ x\in A_{j}\rbrace \;\delta_y^s}\right)dy\\
   \leq c\ 2^{-mN}\I{A_{j+m}}A\left(\frac{c(\varepsilon,R) 2^{ms}\; |f(y)|}{\delta_y^s}\right)dy.
   \end{multline*}
   Note that $\liminf\limits_{\lambda\to \infty}\alpha_{s,A}(\lambda)=0$ implies $\liminf\limits_{\lambda\to \infty}\lambda^\frac{1-N}{s}\alpha_{s,A}(\lambda)=0$. Choosing $m$ sufficiently large and then applying this, we get
   
   $$
   \I{A_j\cap G}A\left( \frac{|f(x)|}{\delta_x^s}\right)
   dx
   \leq \I{A_{j+m}}A\left( \frac{|f(y)|}{\delta_y^s}\right)
   dy.
   $$
   The proof follows as an application of \cref{iterative process}.
\end{proof}
\bigskip

        \section{\texorpdfstring{$RFOPI(s,A)$}{} and \texorpdfstring{$FOPI(s,A)$}{} on unbounded domains}\label{section poincare}
        In this section we study $RFOPI(s,A)$ and $FOPI(s,A)$ for general
        domains in $\RR^N$. 
        \begin{proposition}\label{1-dim poincare}
            Let $D\subseteq\RR$ be an open set and $\lim\limits_{\lambda\to 0}\alpha_{s,A}(\lambda)=0$. Then $P^1_{N,s,A}(D)>0$
            if and only if $BC(D)<\infty$.
        \end{proposition}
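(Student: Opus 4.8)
The statement is an equivalence, and I would treat the two implications separately; the necessity is routine, the sufficiency is the real content.

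\emph{Plan for necessity} ($P^1_{1,s,A}(D)>0\Rightarrow BC(D)<\infty$). I would prove the contrapositive. If $BC(D)=\infty$ then $D$ contains open intervals of arbitrarily large length, and \cref{monotone-dialtion-fbc}(4) immediately gives $P^1_{1,s,A}(D)=0$. If one prefers not to lean on the $\D_2$ content of that proposition, the same conclusion follows by an explicit test-function computation: given $R>0$, pick an interval $I\subseteq D$ with $|I|>R$, fix a nonzero $g\in\test((0,1))$, set $f_R(x)=g((x-a_I)/|I|)$ extended by zero, and note $\I{D}A(|f_R|)=|I|\I0^1 A(|g|)$, while, using only convexity of $A$ (so $A(\theta t)\le\theta A(t)$ for $\theta\le1$) and monotonicity of $t\mapsto A(t)/t$, both the diagonal piece $\II{I\times I}A(\tfrac{|f_R(x)-f_R(y)|}{|x-y|^s})\tfrac{dxdy}{|x-y|}$ and the cross piece over $I\times(D\setminus I)$ are $O(|I|^{1-s})$ uniformly in $R$; the quotient is then $O(|I|^{-s})\to0$.

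\emph{Plan for sufficiency} ($BC(D)<\infty\Rightarrow P^1_{1,s,A}(D)>0$). The idea is to reduce to a single reference interval. First I would write $D=\bigcup_k I_k$ as a countable disjoint union of open intervals (possible since $BC(D)<\infty$ forces $D\subsetneqq\RR$), and observe that $\mbox{diam}(I_k)\le 2\,BC(D)$ for all $k$. Next, for $f\in\test(D)$ I would decompose $f=\sum_k f_k$ with $f_k=f|_{I_k}\in\test(I_k)$ (only finitely many nonzero, by compactness of $\mbox{supp}f$ inside the open cover $\{I_k\}$) and discard the off-diagonal blocks in the Gagliardo double integral, which are nonnegative, to get $M_{W^{s,A}(D)}(f)\ge\sum_k M_{W^{s,A}(I_k)}(f_k)$. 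Then, for each $k$, the definition of $P^1_{1,s,A}(I_k)$ together with \cref{BC lemma} bounds $M_{W^{s,A}(I_k)}(f_k)$ below by $\min\{BC(D)^{-sp},1\}\,P^1_{1,s,A}((0,1))\,\I{I_k}A(|f_k|)$, where $p$ is the $\D_2$ constant of \cref{lemma delta2}. Finally I would invoke \cref{Main theorem} on the bounded Lipschitz domain $(0,1)\subseteq\RR$ — legitimate because $\lim_{\lambda\to0}\alpha_{s,A}(\lambda)=0$ in particular gives $\liminf_{\lambda\to0+}\alpha_{s,A}(\lambda)=0$ — to conclude $P^1_{1,s,A}((0,1))>0$. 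Summing over $k$ then yields $M_{W^{s,A}(D)}(f)\ge c\,\I{D}A(|f|)$ with $c=\min\{BC(D)^{-sp},1\}\,P^1_{1,s,A}((0,1))>0$ independent of $f$, i.e. $P^1_{1,s,A}(D)\ge c>0$.

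\emph{Main obstacle.} The delicate point is the \emph{uniform-in-$k$} lower bound $\inf_k P^1_{1,s,A}(I_k)>0$: the component lengths sweep out all of $(0,2\,BC(D)]$ and $P^1$ decays like a negative power of the length, so this step genuinely needs the dilation estimates for $P^1$ (\cref{monotone-dialtion-fbc}(2), where $\D_2$ enters through $p$) and, as a base case, $P^1_{1,s,A}((0,1))>0$ from \cref{Main theorem}. In effect the hard analysis is already done in \cref{Main theorem}; the residual work here is organizing the component decomposition and noting that $BC(D)<\infty$ is exactly what makes the $\D_2$-dependent factor $BC(D)^{-sp}$ harmless. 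The only things to check carefully are that dropping the off-diagonal interactions is valid (it is, by nonnegativity of the integrand) and that the sum over $k$ is finite for fixed $f$ (it is, by compact support).
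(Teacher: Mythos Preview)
Your proof is correct and follows essentially the same route as the paper: for sufficiency you decompose $D$ into its component intervals, drop the off-diagonal interactions, apply \cref{BC lemma} to get a uniform lower bound $P^1_{1,s,A}(I_k)\ge \min\{BC(D)^{-sp},1\}\,P^1_{1,s,A}((0,1))$, and invoke \cref{Main theorem} on $(0,1)$; for necessity you cite \cref{monotone-dialtion-fbc}(4), exactly as the paper does. The additional direct test-function argument you sketch for necessity is a nice self-contained alternative that avoids the $\Delta_2$ machinery in \cref{monotone-dialtion-fbc}, but it is not needed here.
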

        \begin{proof}
            Suppose $BC(D)<\infty$. There exists a countable number of disjoint open intervals, say $I_k$, such	that $D=\cup_{k=1}^{\infty}I_k$. Let $f\in\test(D)\setminus\lbrace 0\rbrace$.
            Using \cref{Main theorem} followed by \cref{BC lemma}, we get
            \begin{multline*}
            \II{D\times D} A\left(\frac{|f(x)-f(y)|}{|x-y|^s}\right)\frac{dxdy}{|x-y|}
            \geq\sum_{k=1}^\infty \ \II{I_k\times I_k}
            A\left(\frac{|f(x)-f(y)|}{|x-y|^s}\right)\frac{dxdy}{|x-y|}\\
            \geq\sum_{k=1}^\infty P^1_{1,s,A}(I_k)\I{I_k}A(|f(x)|)dx
            \geq \min \lbrace BC(D)^{-sp},1 \rbrace P^1_{1,s,A}((0,1))\sum_{k=1}^\infty \I{I_k}A(|f(x)|)dx\\
            = \min \lbrace BC(D)^{-sp},1 \rbrace P^1_{1,s,A}((0,1))\I{D}A(|f(x)|)dx,
            \end{multline*}
            which implies $P^1_{1,s,A}(D)\geq c P^1_{1,s,A}((0,1))>0$.
            The other part follows from \cref{monotone-dialtion-fbc}.
        \end{proof}
        The next lemma is an important change of variable formula for the  fractional Orlicz seminorm. 
        \begin{lemma}\label{LS lemma}
            Let $0<s<1$ and $D\subseteq\RR^N$ be a measurable set.
            Then for any $f\in \test(D)$,
            \begin{multline*}
            2\II{ D \times D }
            A\left( \frac{|f(x)-f(y)|}{|x-y|^s}\right)  \frac{dxdy}{|x-y|^N}
            \\
            =\I{\omega\in\mathbb{S}^{N-1}}
            \I{x\in\omega^\perp}
            \I{\ell\in L_ D (x,\omega)}
            \I{t\in L_ D (x,\omega)}A\left(
            \frac{|f(x+\ell\omega)-f(x+t\omega)|}{|\ell-t|^s}\right)  \frac{dtd\ell
                d\sigma(x)d\sigma(\omega)}{|\ell-t|},
            \end{multline*}
            where $L_{D}(x,
            \omega) := \lbrace t
            \ |  \ x+t\omega \in D \rbrace$.
        \end{lemma}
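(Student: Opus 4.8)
The plan is to establish the identity by first writing the double integral over $D\times D$ as an integral over all of $\RR^N\times\RR^N$ (extending $f$ by zero if necessary) and then disintegrating the inner integral along lines. Concretely, fix $x\in D$ and consider the inner integral
\[
\I{y\in D}A\!\left(\frac{|f(x)-f(y)|}{|x-y|^s}\right)\frac{dy}{|x-y|^N}.
\]
I would pass to polar coordinates centered at $x$, writing $y=x+r\omega$ with $r>0$ and $\omega\in\mathbb S^{N-1}$, so that $dy=r^{N-1}\,dr\,d\sigma(\omega)$. The factor $|x-y|^N=r^N$ then combines with $r^{N-1}\,dr$ to give $dr/r$, and the inner integral becomes
\[
\I{\omega\in\mathbb S^{N-1}}\I{r>0:\,x+r\omega\in D}A\!\left(\frac{|f(x)-f(x+r\omega)|}{r^s}\right)\frac{dr}{r}\,d\sigma(\omega).
\]

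The next step is to interchange the $x$-integral with the $\omega$-integral (justified by Tonelli, since the integrand is nonnegative) and, for each fixed $\omega\in\mathbb S^{N-1}$, to foliate $\RR^N$ by lines in direction $\omega$: every point $z\in\RR^N$ is written uniquely as $z=x+t\omega$ with $x\in\omega^\perp$ and $t\in\RR$, and Lebesgue measure factors as $dz=d\sigma(x)\,dt$ where $d\sigma$ is $(N-1)$-dimensional Lebesgue measure on the hyperplane $\omega^\perp$. Substituting this parametrization into the outer variable as well, the single point $z=x+\ell\omega$ plays the role of the original $x$ and $r\omega$ becomes $(t-\ell)\omega$, i.e. $r=|t-\ell|$; the membership conditions $z\in D$ and $z+r\omega\in D$ translate exactly into $\ell\in L_D(x,\omega)$ and $t\in L_D(x,\omega)$. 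After relabeling, the left side has been rewritten as
\[
\I{\omega\in\mathbb S^{N-1}}\I{x\in\omega^\perp}\I{\ell\in L_D(x,\omega)}\I{t\in L_D(x,\omega)}A\!\left(\frac{|f(x+\ell\omega)-f(x+t\omega)|}{|\ell-t|^s}\right)\frac{dt\,d\ell\,d\sigma(x)\,d\sigma(\omega)}{|\ell-t|},
\]
which is the claimed right-hand side.

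Finally I would account for the constant $2$. In the polar-coordinate step I only integrated over $r>0$, i.e. I recorded each unordered pair $\{z,z+r\omega\}$ of distinct points once for the direction $\omega$ pointing from the first to the second; allowing $\omega$ to range over all of $\mathbb S^{N-1}$ while letting $\ell,t$ range freely over $L_D(x,\omega)$ counts each unordered pair of points on the line exactly twice (once as $(\ell,t)$ and once as $(t,\ell)$, equivalently once for $\omega$ and once for $-\omega$). This is precisely the factor $2$ appearing on the left. The only technical point to be careful about is measurability and the applicability of Tonelli/the coarea-type change of variables — but since $f\in\test(D)$ the integrand is measurable and nonnegative, so all interchanges are legitimate, and the line-foliation of $\RR^N$ for a fixed direction is an elementary linear change of variables with unit Jacobian. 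I expect the main obstacle to be purely bookkeeping: matching the domains of integration $L_D(x,\omega)$ correctly on both sides and confirming the factor of $2$; there is no analytic difficulty.
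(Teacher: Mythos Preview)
Your proposal is correct and follows essentially the same route as the paper: polar coordinates $y=x+r\omega$ to reduce to a one-dimensional integral in $r$, then for each fixed $\omega$ a foliation of $\RR^N$ by lines parallel to $\omega$ via the decomposition $z=(z-(z\cdot\omega)\omega)+(z\cdot\omega)\omega$. The only cosmetic difference is that the paper absorbs the factor $2$ immediately by passing from $r>0$ to $h\in L_D(x,\omega)\subseteq\RR$, whereas you keep $r>0$ and account for the doubling at the end; the content is identical.
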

        \begin{proof}
            For a fixed $f\in\test( D )$ and $x\in D $, consider the change of variable $y=x+z$ followed by another change of variable to polar coordinates. We then get
            \begin{multline*}
            2\II{ D \times D }
            A\left( \frac{|f(x)-f(y)|}{|x-y|^s}\right)  \frac{dxdy}{|x-y|^N}
            =2\I{x\in D } \I{z\in D -x}A\left( \frac{|f(x)-f(x+z)|}{|z|^s}\right)
            \frac{dzdx}{|z|^N}\\
            =2\I{x\in D } \I{\omega\in\mathbb{S}^{N-1}}\I{r\in\lbrace r>0 \ | \ x+r\omega\in
                D \rbrace }A\left( \frac{|f(x)-f(x+r\omega)|}{r^s}\right)
            \frac{drd\sigma(\omega)dx}{r}\\
            =\I{\omega\in\mathbb{S}^{N-1}}\I{x\in D } \I{h\in L_ D (x,\omega)}A\left(
            \frac{|f(x)-f(x+h\omega)|}{|h|^s}\right)  \frac{dhdxd\sigma(\omega)}{|h|}.
            \end{multline*}
            Now, for a fixed $\omega\in\mathbb{S}^{N-1}$ we can write $x=(x-(x \cdot
            \omega)\omega)+(x \cdot \omega)\omega$. Using this we can split the
            integral over $x\in D $ by considering the change of variable
            $\ell=x\cdot\omega$ and $z=x-(x \cdot \omega)\omega$, where $\ell\in L_ D
            (z,\omega)$ and $z\in\omega^\perp$. So we get
            \begin{multline*}
            2\II{ D \times D }
            A\left( \frac{|f(x)-f(y)|}{|x-y|^s}\right)  \frac{dxdy}{|x-y|^N}
            \\
            =\I{\omega\in\mathbb{S}^{N-1}}
            \I{z\in\omega^\perp}
            \I{\ell\in L_ D (z,\omega)}
            \I{h\in L_ D (x,\omega)}A\left(
            \frac{|f(z+\ell\omega)-f(z+(\ell+h)\omega)|}{|h|^s}\right)  \frac{dhd\ell
                d\sigma(z)d\sigma(\omega)}{|h|}\\
            =\I{\omega\in\mathbb{S}^{N-1}}
            \I{z\in\omega^\perp}
            \I{\ell\in L_ D (z,\omega)}
            \I{t\in L_ D (z+\ell\omega,\omega)+\ell}A\left(
            \frac{|f(z+\ell\omega)-f(z+t\omega)|}{|\ell-t|^s}\right)  \frac{dhd\ell
                d\sigma(z)d\sigma(\omega)}{|\ell-t|}.
            \end{multline*}
            We conclude the proof of this lemma with the observation that $L_ D
            (z,\omega)=L_ D (z+\ell\omega,\omega)+\ell.$
            This finishes the proof of the theorem. \end{proof}\smallskip

        \begin{proof}[\textbf{Proof of \Cref{Sufficient condition}}]
            Let $f\in\test( D )$.  (1) Applying \cref{LS lemma} we obtain
            \begin{multline*}
            2\II{ D \times D }
            A\left( \frac{|f(x)-f(y)|}{|x-y|^s}\right)  \frac{dxdy}{|x-y|^N}
            \\
            =\I{\omega\in\mathbb{S}^{N-1}}
            \I{x\in\omega^\perp}
            \I{\ell\in L_ D (x,\omega)}
            \I{t\in L_ D (x,\omega)}A\left(
            \frac{|f(x+\ell\omega)-f(x+t\omega)|}{|\ell-t|^s}\right)  \frac{dtd\ell
                d\sigma(x)d\sigma(\omega)}{|\ell-t|}\\
            \geq \I{\omega\in\Sigma}
            \I{x\in\omega^\perp}
            \I{\ell\in L_ D (x,\omega)}
            \I{t\in L_ D (x,\omega)}A\left(
            \frac{|f(x+\ell\omega)-f(x+t\omega)|}{|\ell-t|^s}\right)  \frac{dtd\ell
                d\sigma(x)d\sigma(\omega)}{|\ell-t|}\\
            =\sum_{k=1}^{\infty} \I{\omega\in\Sigma}
            \I{x\in\omega^\perp}
            \I{\ell\in L_ D (x,\omega)}
            \I{t\in I_k}A\left(
            \frac{|f(x+\ell\omega)-f(x+t\omega)|}{|\ell-t|^s}\right)  \frac{dtd\ell
                d\sigma(x)d\sigma(\omega)}{|\ell-t|}\\
            \geq \sum_{k=1}^{\infty}\I{\omega\in\Sigma}
            \I{x\in\omega^\perp}
            P^1_{1,s,A}(I_k)\I{t\in I_k}A(|f(x+t\omega)|)dt
            d\sigma(x)d\sigma(\omega),
            \end{multline*}
            where $L_ D (x,\omega)=\cup_{k=1}^\infty I_k$, for mutually disjoint family of intervals $\lbrace I_k \rbrace_k$.
            From \cref{BC lemma} we have, for any $k$,
            $$
            P^1_{1,s,A}(I_k)\geq \min\lbrace BC(L_D(x,\omega))^{-sp},1\rbrace P^1_{1,s,A}((0,1)).
            $$
            We can now use the hypothesis of the lemma to get a constant $c=c(D)>0$ such that $c<\min\lbrace BC(L_D(x,\omega))^{-sp},1\rbrace$. Again as an application of \cref{Main theorem} $P^1_{1,s,A}((0,1))>0$. Using these we get
            \begin{multline*}
            \II{D\times D}
            A\left( \frac{|f(x)-f(y)|}{|x-y|^s}\right)  \frac{dxdy}{|x-y|^N}
            \geq c \I{\omega\in\Sigma}
            \I{x\in\omega^\perp}
            \I{t\in L_ D (x,\omega)}A(|f(x+t\omega)|)dt
            d\sigma(x)d\sigma(\omega)\\
            \geq c \I{\omega\in\Sigma}
            \I{x\in D }A(|f(x)|)dxd\sigma(\omega)
            =c\I{D}A(|f(x)|)dx.
            \end{multline*}
            which implies that $P^1_{N,s,A}( D )>0.$ This completes the first part of the proof.\smallskip
            
            (2) We use the positivity, increasing property of $A$,
            the
            fact that $f\equiv0$ on $ D ^c$ and suitable change of variable in the
            following calculation.
            \begin{multline*}
            \II{\RR^N\times\RR^N}A\left( \frac{|f(x)-f(y)|}{|x-y|^s}\right)
            \frac{dydx}{|x-y|^N}=\frac{1}{R^N}\II{\RR^N\times\RR^N}A\left(
            \frac{R^s|f(\frac{x}{R})-f(\frac{y}{R})|}{|x-y|^s}\right)
            \frac{dydx}{|x-y|^N}\\
            \geq \frac{1}{R^N}\I{x\in\RR^N}\I{y\in B(x,R)\cap  D ^c}A\left(
            \frac{R^s|f(\frac{x}{R})|}{|x-y|^s}\right)  \frac{dydx}{|x-y|^N}\geq
            \frac{\mathcal{L}^{N}(B(x,R)\cap  D ^c)}{R^{N}}\I{\RR^N}A\left(|f\left(\frac{x}{R}\right)|\right)dx\\
            =\frac{c_1}{R^{N}}\I{\RR^N}A\left(|f(x)|\right)dx=\frac{c_1}{R^{N}}\I{D}A\left(|f(x)|\right)dx.
            \end{multline*}
            This shows $P^2_{N,s,A}( D )>0$ and the proof of the second part follows.\smallskip
            
            (3) This is a straightforward use of \cref{LS lemma} as in the first part of the proof.
        \end{proof}
        
        As an application of \cref{Sufficient condition} we discuss here
        some examples of some unbounded domains for which Poincar\'e
        inequality holds true, provided $\lim\limits_{\lambda\to 0^+}\alpha_{s,A}(\lambda)=0$ for the first condition (see also \cite{Ind,Ind-Roy}). Although verification of the claims in these examples is straight forward, we work it out in the second example.
        \smallskip
        
        \begin{example}[Domain between graphs of two functions] Let
            $f_i:\RR^{N-1}\to[m,M]$ ($i=1,2$) be two bounded continuous function such
            that $f_1<f_2$. We then define $D$ as
            $$
            D=\lbrace (\tilde{x},x_N)\in\RR^N\ | \ f_1(\tilde{x})<x_N<f_2(\tilde{x})\rbrace .
            $$
            It is easy to verify that all the hypotheses of \cref{Sufficient condition} are true.
        \end{example}
        
        \begin{example}[Countable union of parallel strips]
            Let $D:=I\times\RR\subseteq\RR^2$, where $I:=\cup_{k=1}^\infty I_k$ and $I_k$'s are disjoint intervals with $\mbox{dist}(I_i,I_j)\geq\gamma$ for a constant $\gamma>0$, where $I_i,\;I_j$ are any two distinct intervals. Also assume that $\sup\limits_{k}(\mathcal{L}^1(I_k))=M<\infty.$ Then it is easy to check that the second condition of \cref{Sufficient condition} holds. This gives $P^2_{1,s,A}(I)>0$, for any $s\in(0,1)$. Now choose  $\Sigma=\{\omega_\theta:=(cos\theta,sin\theta)\ | \ \theta\in(0,\pi/4)\}$. Then for $x\in\omega_\theta^\perp$, the set $L_D(x,\omega_\theta)$ can be expressed as the countable union of disjoint intervals, i.e. $L_D(x,\omega_\theta)=\cup_{k=1}^\infty J_k(x,\omega_\theta)$. Observe that $\mathcal{L}^1(J_k(x,\omega_\theta))=\frac{ \mathcal{L}^1(I_k)}{cos\theta}$ for any $k$, which is independent of $x$. Thus, we have for some constant $c=c(x,\omega_\theta)$,
            \begin{equation}\label{LD}
            L_D(x,\omega_\theta)=c+(\sec\theta) I.
            \end{equation}
            This gives $\sup\limits_{\omega_\theta
                \in\Sigma, x\in
                \omega_\theta^\perp}BC(L_D(x,\omega_\theta))\leq M\sqrt{2}$ and which shows that the first condition holds. Consequently $RFOPI(s,A)$ holds in $D$ when $\lim\limits_{\lambda\to0+}\alpha_{s,A}(\lambda)=0$. For third condition, Using (3) of \cref{monotone-dialtion-fbc,LD} we obtain, since $\theta\in(0,\pi/4)$,
            $$P^2_{1,s,A}(L_D(x,\omega_\theta))\geq\frac{P^2_{1,s,A}(I)}{(cos\theta)^{sp}}\geq 2^{\frac{sp}{2}}P^2_{1,s,A}(I).$$
            So, uniform $FOPI(s,A)$ holds for the collection $\lbrace L_D(x,\omega_\theta)\rbrace_{\omega_\theta\in\Sigma,x\in\omega_\theta^\perp}$. Hence the domain $D$ is of type $LS(s,A)$ for any $s\in(0,1)$. Consequently $FOPI(s,A)$ holds in $D$ for any $s\in(0,1)$.
        \end{example}
        
        \begin{example}[Concentric annulus] The following
            domain satisfies condition 2 for all $s\in(0,1)$ of \cref{Sufficient condition}:
            $$
            D=\bigcup_{k=1}^\infty B(0,2k)\setminus B(0,2k-1).
            $$
        \end{example}
        
        \begin{example}[Domain with holes around points of $\ZZ\times\ZZ$] The following domain satisfies the condition 2 of \cref{Sufficient condition}:
            $$
            D=\RR^2\setminus\left(\bigcup\limits_{x\in\mathbb{Z}^2}B\left(x,1/10\right)\right).
            $$
        \end{example}
        
        \bigskip
        
        \section*{Acknowledgement} Research work of first author is funded by Matrics grant  (MTR/2020/000594). The work of the second author is supported by CSIR(09/092(0940)/2015-EMR-I). Research work of third author is funded by Matrics grant (MTR/2019/000585) and Inspire grant  (IFA14-MA43) of Department of Science and Technology (DST).

    \end{document}